\date{June 8, 2014}
\let\oldsection\section
\renewcommand\section{\setcounter{equation}{0}\oldsection}
\newtheorem{theorem}{Theorem}[section]
\newtheorem{lemma}{Lemma}[section]
\newtheorem{proposition}{Proposition}[section]
\newtheorem{definition}{Definition}[section]
\newtheorem{remark}{Remark}[section]
\begin{document}

\title[3D Primitive Equations with Horizontal Viscosities and Diffusion]{Global Well-posedness of the 3D Primitive Equations with Only  Horizontal Viscosity and Diffusion}

\author{Chongsheng~Cao}
\address[Chongsheng~Cao]{Department of Mathematics, Florida International University, University Park, Miami, FL 33199, USA}
\email{caoc@fiu.edu}

\author{Jinkai~Li}
\address[Jinkai~Li]{Department of Computer Science and Applied Mathematics, Weizmann Institute of Science, Rehovot 76100, Israel}
\email{jklimath@gmail.com}

\author{Edriss~S.~Titi}
\address[Edriss~S.~Titi]{Department
of Computer Science and Applied Mathematics, Weizmann Institute of Science,
Rehovot 76100, Israel. Also Department of Mathematics and Department of Mechanical and Aerospace Engineering, University of California, Irvine, California 92697-3875, USA}
\email{etiti@math.uci.edu and edriss.titi@weizmann.ac.il}

\keywords{Global well-posedness; anisotropic hydrostatic Navier-Stokes equations; primitive equations}
\subjclass[2010]{35Q35, 76D03, 86A10.}


\begin{abstract}
In this paper, we consider the initial-boundary value problem of the 3D primitive equations for planetary oceanic and atmospheric dynamics with only horizontal eddy viscosity in the horizontal momentum equations and only horizontal diffusion in the temperature equation. Global well-posedness of strong solution is established for any $H^2$ initial data. An $N$-dimensional logarithmic Sobolev embedding inequality, which bounds the $L^\infty$ norm in terms of the $L^q$ norms up to a logarithm of the $L^p$-norm, for $p>N$, of the first order derivatives, and a system version of the classic Gronwall inequality are exploited to establish the required a priori $H^2$ estimates for the global regularity.
\end{abstract}

\maketitle
\allowdisplaybreaks
\section{Introduction}
\label{sec1}

The primitive equations are derived from the Boussinisq system of incompressible flow and they form a fundamental block in models for planetary oceanic and atmospheric dynamics, see, e.g., Lewandowski \cite{LEWAN}, Majda \cite{MAJDA}, Pedlosky \cite{PED}, Vallis \cite{VALLIS}, and Washington and Parkinson \cite{WP}. Due to their importance, the primitive equations has been studied analytically by many authors, see, e.g., \cite{LTW92A,LTW92B,TZ04,PTZ09,MAJDA} and the references therein.

In this paper, we consider the following version primitive equations with only horizontal eddy
viscosities and only horizontal diffusion due to strong dominant horizontal turbulence mixing:
\begin{eqnarray}
&\partial_tv+(v\cdot\nabla_H)v+w\partial_zv+\nabla_Hp-\Delta_H v+f_0k\times
v=0,\label{1.1-1}\\
&\partial_zp+T=0,\label{1.2-1}\\
&\nabla_H\cdot v+\partial_zw=0,\label{1.3-1}\\
&\partial_tT+v\cdot\nabla_H T+w\partial_zT-\Delta_HT=0,\label{1.4-1}
\end{eqnarray}
where the horizontal velocity $v=(v^1,v^2)$, the vertical velocity $w$, the temperature
$T$ and the pressure $p$ are the unknowns, and $f_0$ is the Coriolis parameter. In this paper, we use the notations $\nabla_H=(\partial_x,\partial_y)$
and $\Delta_H=\partial_x^2+\partial_y^2$ to denote the horizontal gradient and the
horizontal Laplacian, respectively.

For the primitive equations with full viscosities and full diffusion, the mathematical analysis was initialed in 1990s by Lions, Temam and Wang \cite{LTW92A,LTW92B,LTW95}, where among other issues they established the global
existence of weak solutions. The uniqueness of weak solutions for 2D case was later proved
by Bresch, Guill\'en-Gonz\'alez, Masmoudi and Rodr\'iguez-Bellido \cite{BGMR03}; however,
the uniqueness of weak solutions for the three-dimensional case is still unclear. Local well-posedness of strong solutions was obtained by Guill\'en-Gonz\'alez, Masmoudi and Rodr\'iguez-Bellido \cite{GMR01}. Global existence
of strong solutions for 2D case was established by Bresch, Kazhikhov and Lemoine in
\cite{BKL04} and Temam and Ziane in \cite{TZ04}, while the 3D case was established in \cite{CAOTITI2}. Global strong solutions for 3D case were also obtained by
Kobelkov \cite{KOB06} later by using a different approach, see also
the subsequent articles by Kukavica and Ziane \cite{KZ07A,KZ07B}.

The systems considered in all the papers \cite{CAOTITI2,LTW92A,LTW92B,LTW95,TZ04,PTZ09,BGMR03,GMR01,BKL04,KOB06,KZ07A,KZ07B} are assumed to have full dissipation, i.e. with both full viscosities and full diffusion. Both physically and mathematically, it is also important and interesting to study
the system with partial dissipation, i.e. with only partial viscosities or only partial diffusion. The first result in this direction for the primitive equations was obtained in \cite{CAOTITI3}, where the authors considered the system with full viscosities but only vertical diffusion, and proved that such a system has a unique global strong solution, provided the local in time one exists. As the complement and a generalization of \cite{CAOTITI3}, the local and global well-posedness of strong solutions are recently established in \cite{CAOLITITI1}, with $H^2$ initial data. As the counterpart of \cite{CAOLITITI1}, global well-posedness of strong solutions to the primitive equations with full viscosities but only horizontal diffusion is later obtained in \cite{CAOLITITI2}, still for $H^2$ initial data. Notably, smooth solutions to the inviscid primitive equation, with or without coupling to the temperature equation, has been shown \cite{CINT} to blow up in finite time (see also Wong \cite{TKW}).

Note that in all the papers \cite{CAOTITI2,LTW92A,LTW92B,LTW95,TZ04,PTZ09,BGMR03,GMR01,BKL04,KOB06,KZ07A,KZ07B,CAOTITI3,
CAOLITITI1,CAOLITITI2}, no matter whether the systems are considered to have full of partial diffusion in the temperature equation, they are assumed to have full viscosities in the horizontal momentum equations. Physically, in the oceanic and atmospheric dynamics, the horizontal scales are much lager than the vertical one with dominant strong horizontal turbulence mixing that induces horizontal viscosities, i.e. system (\ref{1.1-1})--(\ref{1.4-1}). From the mathematical point of view, there are two obvious difficulties in studying system (\ref{1.1-1})--(\ref{1.4-1}). One is that the strongest nonlinear term, i.e. $\left(\int_{-h}^z\nabla_H\cdot vd\xi\right)\partial_zv$, is quadratic in the first derivatives of the unknowns. This is caused by the lack of the dynamical equation for the vertical component of the velocity. The other one is that, due to the lack of the vertical viscosity in the horizontal momentum equations, one can not expect any smoothing effect in the vertical direction.

The aim of this paper is to show that strong solutions exist globally for system (\ref{1.1-1})--(\ref{1.4-1}), subject to some initial and boundary conditions, for any $H^2$ initial data.
More precisely, we consider the problem in the domain $\Omega_0=M\times(-h,0)$, with $M=(0,1)\times(0,1)$, and supplement system (\ref{1.1-1})--(\ref{1.4-1}) with the following boundary and initial conditions:
\begin{eqnarray}
& v, w\mbox{ and } T \mbox{ are }\mbox{periodic in }x \mbox{ and }y,\label{1.5-1}\\
&(\partial_zv,w)|_{z=-h,0}=(0,0),\quad T|_{z=-h}=1,\quad T|_{z=0}=0,\label{1.6-1}\\
&(v,T)|_{t=0}=(v_0, T_0). \label{1.7-1}
\end{eqnarray}

Replacing $T$ and $p$ by $T+\frac{z}{h}$ and $p-\frac{z^2}{2h}$, respectively, then system (\ref{1.1-1})--(\ref{1.4-1}) with (\ref{1.5-1})--(\ref{1.7-1}) is reduced to
\begin{eqnarray}
&\partial_tv+(v\cdot\nabla_H)v+w\partial_zv+\nabla_Hp-\Delta_H v+f_0k\times v=0,\label{1-1.1}\\
&\partial_zp+T=0,\label{1-1.2}\\
&\nabla_H\cdot v+\partial_zw=0,\label{1-1.3}\\
&\partial_tT+v\cdot\nabla_HT+w\left(\partial_zT+\frac{1}{h}\right)-\Delta_HT=0,\label{1-1.4}
\end{eqnarray}
subject to the boundary and initial conditions
\begin{eqnarray}
& v, w, T \mbox{ are }\mbox{periodic in }x \mbox{ and }y ,\label{1-1.5}\\
&(\partial_zv,w)|_{z=-h,0}=0,\quad T|_{z=-h,0}=0,\label{1-1.6}\\
&(v,T)|_{t=0}=(v_0, T_0). \label{1-1.7}
\end{eqnarray}
Here, for simplicity, we still use $T_0$ to denote the initial temperature in (\ref{1-1.7}), though it is obtained by replacing the $T_0$ in (\ref{1.7-1}) by $T_0-\frac{z}{h}$.

Due to the same reasons to those explained in \cite{CAOLITITI1,CAOLITITI2}, system (\ref{1-1.1})--(\ref{1-1.7}) defined on $\Omega_0$ is equivalent to the following system defined on $\Omega:=M\times(-h,h)$:
\begin{eqnarray}
&\partial_tv+(v\cdot\nabla_H)v+w\partial_zv+\nabla_Hp-\Delta_H v+f_0k\times v=0,\label{1.1}\\
&\partial_zp+T=0,\label{1.2}\\
&\nabla_H\cdot v+\partial_zw=0,\label{1.3}\\
&\partial_tT+v\cdot\nabla_HT+w\left(\partial_zT+\frac{1}{h}\right)-\Delta_HT=0,\label{1.4}
\end{eqnarray}
subject to the boundary and initial conditions
\begin{eqnarray}
& v, w, p \mbox{ and } T \mbox{ are }\mbox{periodic in }x, y, z,\label{1.5}\\
& v\mbox{ and }p \mbox{ are even in }z,\mbox{ and } w\mbox{ and }T\mbox{ are odd in }z,\label{1.6}\\
&(v,T)|_{t=0}=(v_0, T_0). \label{1.7}
\end{eqnarray}
Not that the restriction on the sub-domain $\Omega_0$ of a solution $(v,w,p,T)$ to system (\ref{1.1})--(\ref{1.7}) is a solution to the original system (\ref{1-1.1})--(\ref{1-1.7}).
Because of this, throughout this paper, we mainly concern on the study of system (\ref{1.1})--(\ref{1.7}) defined on $\Omega$, while the well-posedness results for system (\ref{1-1.1})--(\ref{1-1.7}) defined on $\Omega_0$ follow as a corollary of those for system (\ref{1.1})--(\ref{1.7}).

One can check that system (\ref{1.1})--(\ref{1.7})
is equivalent to (see \cite{CAOTITI3} for example)
\begin{eqnarray}
&\partial_tv-\Delta_H v+(v\cdot\nabla_H)v-\left(\int_{-h}^z\nabla_H\cdot v(x,y,\xi,t)d\xi\right)\partial_zv\nonumber\\
&\quad\qquad+f_0k\times v+\nabla_H\left(p_s(x,y,t)-\int_{-h}^zT(x,y,\xi,t)d\xi\right)=0,\label{1.8}\\
&\nabla_H\cdot\int_{-h}^hv(x,y,z,t)dz=0,\label{1.9}\\
&\partial_tT-\Delta_HT+v\cdot\nabla_HT-\left(\int_{-h}^z\nabla_H\cdot v(x,y,\xi,t)d\xi\right)\left(\partial_zT+\frac{1}{h}\right)=0,\label{1.10}
\end{eqnarray}
subject to the following boundary and initial conditions
\begin{eqnarray}
&v\mbox{ and } T \mbox{ are periodic in }x, y, z,\label{BC1}\\
&v\mbox{ and } T \mbox{ are even and odd in }z,\mbox{ respectively},\label{BC2}\\
&(v,T)|_{t=0}=(v_0, T_0). \label{IC}
\end{eqnarray}

Before stating our main results, let's introduce some necessary notations and give the definitions of strong solutions. Throughout this paper, for $1\leq q\leq\infty$, we use $L^q(\Omega), L^q(M)$ and $W^{m,q}(\Omega), W^{m,q}(M)$ to denote the standard Lebesgue and Sobolev spaces, respectively. For $q=2$, we use $H^m$ instead of $W^{m,2}$. We use $W_{\text{per}}^{m,q}(\Omega)$ and $H^m_{\text{per}}$ to denote the spaces of periodic functions in $W^{m,q}(\Omega)$ and $H^m(\Omega)$, respectively. For simplicity, we still use the notations $L^p$ and $H^m$ to denote the $N$ product spaces $(L^p)^N$ and $(H^m)^N$, respectively. We always use $\|u\|_p$ to denote the $L^p$ norm of $u$. Moreover, for convenience, we often use $\|(f_1,\cdots,f_n)\|_2^2$ to denote the summation $\sum_{i=1}^n\|f_i\|_2^2$.

\begin{definition}\label{def1.1}
Given a positive time $\mathcal T$, and let $v_0\in H^2(\Omega)$ and $T_0\in H^2(\Omega)$ be two periodic functions, such that they are even and odd in $z$, respectively. A couple $(v,T)$ is called a strong solution to system
(\ref{1.8})-(\ref{IC}) (or equivalently (\ref{1.1})--(\ref{1.7})) on $\Omega\times(0,\mathcal T)$ if

(i) $v$ and $T$ are periodic in $x,y,z$, and they are even and odd in $z$, respectively;

(ii) $v$ and $T$ have the regularities
\begin{eqnarray*}
&(v,T)\in L^\infty(0,\mathcal T; H^2(\Omega))\cap C([0,\mathcal T];H^1(\Omega)),\\
&(\nabla_Hv,\nabla_HT)\in L^2(0,\mathcal T; H^2(\Omega)), \quad(\partial_tv,\partial_tT)\in L^2(0,\mathcal T; H^1(\Omega));
\end{eqnarray*}

(iii) $v$ and $T$ satisfy equations (\ref{1.8})--(\ref{1.10}) a.e. in $\Omega\times(0,\mathcal T)$ and the initial condition (\ref{IC}).
\end{definition}

\begin{definition}
A couple $(v,T)$ is called a global strong solution to system (\ref{1.8})--(\ref{IC}) if it is a strong solution on $\Omega\times(0,\mathcal T)$ for any $\mathcal T\in(0,\infty)$.
\end{definition}

The main result of this paper is the following global well-posedness result.

\begin{theorem}\label{thm1}
Suppose that the periodic functions $v_0,T_0\in H^2(\Omega)$ are even and odd in $z$, respectively. Then system (\ref{1.8})-(\ref{IC}) (or equivalently (\ref{1.1})--(\ref{1.7})) has a unique global strong solution $(v,T)$, which is continuously depending on the initial data.
\end{theorem}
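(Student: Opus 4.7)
The plan is the classical continuation strategy: first establish local-in-time existence and uniqueness of a strong solution on a maximal interval $[0,\mathcal{T}_{\max})$, then derive a priori $H^2$ estimates that remain finite on every bounded subinterval of $[0,\mathcal{T}_{\max})$, and conclude $\mathcal{T}_{\max}=+\infty$ by the usual continuation argument. Local existence itself can be produced by a Galerkin approximation in the subspace of periodic functions respecting the symmetry ($v$ even, $T$ odd in $z$), together with a contraction argument at the $H^2$ level analogous to those in \cite{CAOTITI3,CAOLITITI1,CAOLITITI2}; the length of the local existence interval depends only on $\|v_0\|_{H^2}+\|T_0\|_{H^2}$.

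To obtain the a priori bound I would proceed through a hierarchy of energy estimates. First, multiplying (\ref{1.8}) by $v$ and (\ref{1.10}) by $T$ gives the basic $L^2$ identity together with time integrability of $\|\nabla_H v\|_2$ and $\|\nabla_H T\|_2$; an $L^q$ bound on $v$ follows by testing with $|v|^{q-2}v$, the critical vertical transport $w\partial_z v=-(\int_{-h}^{z}\nabla_H\cdot v\,d\xi)\partial_z v$ integrating to zero by the divergence relation $\nabla_H\cdot v+\partial_z w=0$. Next, differentiating (\ref{1.1}) and (\ref{1.4}) in $z$ and testing with $\partial_z v,\partial_z T$ respectively yields $L^2$ control of the vertical derivatives; the hydrostatic relation $\partial_z p=-T$ converts the pressure term into $-\nabla_H T$, and the remaining cubic nonlinearities are handled by anisotropic Ladyzhenskaya-type inequalities combined with the $L^q$ bound on $v$ and the horizontal dissipation $\|\nabla_H\partial_z v\|_2^2$. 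Testing (\ref{1.8})--(\ref{1.10}) against $-\Delta_H v$ and $-\Delta_H T$ then upgrades these into $H^1$ bounds.

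The main obstacle is the $H^2$ a priori estimate. The strongest nonlinearity, $(\int_{-h}^{z}\nabla_H\cdot v\,d\xi)\partial_z v$, is quadratic in first derivatives and is not smoothed in $z$ because the vertical viscosity is absent. Differentiating the system once more in $z$ and in $x,y$ and testing against the corresponding second derivatives, one encounters cubic terms of the form $\int|\nabla_H v|\,|\partial_z v|\,|\partial_z^2 v|$ that cannot be absorbed into the horizontal dissipation $\|\nabla_H\partial_z^2 v\|_2^2$ alone. Here I would invoke the $N$-dimensional logarithmic Sobolev embedding advertised in the abstract, schematically
\[
\|f\|_\infty \le C\,\|f\|_q\Bigl(1+\log(e+\|\nabla f\|_p)\Bigr)^{1/2},\qquad p>N,
\]
with $f=\partial_z v$ (and, symmetrically, $\partial_z T$), choosing $q$ small enough that $\|\partial_z v\|_q$ is already controlled by the $H^1$ estimate from the previous step and $p>3$, so that each offending cubic term takes the form $C\,X(t)\,\log(e+Y(t))$, where $X(t)$ is integrable in $t$ thanks to the horizontal dissipation and $Y(t)=1+\|v\|_{H^2}^2+\|T\|_{H^2}^2$. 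Because $v$ and $T$ are coupled through the hydrostatic pressure $-\int_{-h}^{z}T\,d\xi$ and through the vertical transport, the resulting differential inequalities for $\|v\|_{H^2}^2$ and $\|T\|_{H^2}^2$ do not decouple; the system version of the logarithmic Gronwall inequality announced in the abstract is precisely what allows one to convert this coupled logarithmic system into a (double-exponential in $t$) bound that is finite on every bounded interval, closing the $H^2$ estimate.

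Once the a priori $H^2$ bound is in hand, a standard continuation argument gives $\mathcal{T}_{\max}=+\infty$. Uniqueness and continuous dependence follow from an $L^2$ energy estimate on the difference $(v_1-v_2,T_1-T_2)$ of two strong solutions: the $L^\infty_tH^2_x$ regularity of each solution is amply sufficient to bound all transport and coupling cross terms, and classical Gronwall yields both uniqueness and Lipschitz-type dependence in $L^2$. The most delicate point throughout is the third paragraph: the exponents $p,q$ in the logarithmic Sobolev inequality must be chosen compatibly with the horizontal-only smoothing and with the $v$--$T$ coupling, so that the final differential inequality is genuinely linear in $\log(e+Y)$, and therefore amenable to the system Gronwall.
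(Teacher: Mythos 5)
Your overall architecture (hierarchy of energy estimates, a logarithmic Sobolev inequality to tame the $L^\infty$ coefficient, a system Gronwall lemma, then continuation/limit) matches the paper's strategy, but two steps as you describe them would fail. First, the logarithmic Sobolev inequality you invoke is misstated and misapplied. An inequality of the form $\|f\|_\infty\leq C\|f\|_q\bigl(1+\log(e+\|\nabla f\|_p)\bigr)^{1/2}$ with a \emph{single fixed} $q$ is false: for a bump of height $M$ supported on a ball of radius $\delta$ one has $\|f\|_q\sim M\delta^{N/q}\to0$ while $\|f\|_\infty=M$ and $\log^{1/2}(e+\|\nabla f\|_p)\sim(\log(1/\delta))^{1/2}$, so the right-hand side tends to zero. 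The correct inequality (Lemma \ref{log}) replaces $\|f\|_q$ by $\sup_{r\geq2}\|f\|_r/r^{\lambda}$, i.e.\ it requires control of \emph{all} $L^r$ norms with growth no faster than $r^{\lambda}$. Moreover, the quantity that actually appears as a coefficient in the higher-order energy inequalities is $\|v\|_\infty^2$, not $\|\partial_zv\|_\infty^2$; the cubic terms you name, such as $\int|\nabla_Hv||\partial_zv||\partial_z^2v|$, are handled by the anisotropic Ladyzhenskaya inequalities of Lemma \ref{lad}, not by an $L^\infty$ bound on $\partial_zv$. Applying the logarithmic inequality to $f=\partial_zv$ cannot work anyway, because the only available $L^q$ bounds on $\partial_zv$ (Proposition \ref{prop4.1}(i)) come from differential inequalities whose coefficient is itself $\|v\|_\infty^2$ --- the argument would be circular --- and their constants are not controlled in $q$.

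Second, and relatedly, you dispose of the $L^q$ estimate on $v$ by observing that the vertical transport term integrates to zero against $|v|^{q-2}v$. That is true of the transport term, but the obstruction is the surface pressure: $\int_\Omega\nabla_Hp_s\cdot|v|^{q-2}v\,dxdydz$ does not vanish for $q>2$. The technical core of the paper (Proposition \ref{prop3.1}(iii)) is the elliptic decomposition $p_s=p_s^0+p_s^1+p_s^2$ and a two-step bootstrap producing $\sup_{0\leq t\leq\mathcal T}\|v\|_q\leq C\sqrt q$ with $C$ \emph{independent of} $q$; it is exactly this $\sqrt q$ growth, fed into Lemma \ref{log} with $\lambda=1/2$ and $f=v$, that yields $\|v\|_\infty^2\leq K(t)\log\bigl(\sum_iA_i\bigr)$ and lets the system Gronwall lemma close the $H^2$ bound. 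Without identifying and proving the $q$-uniform growth rate of $\|v\|_q$, the logarithmic step has no input and the scheme does not close. (A lesser remark: the paper sidesteps local well-posedness of the unregularized system entirely by adding $\varepsilon\partial_z^2$, proving $\varepsilon$-uniform $H^2$ bounds, and passing to the limit via Aubin--Lions; your Galerkin-plus-continuation route is plausible but would require its own justification at the $H^2$ level in the absence of vertical smoothing.)
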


The key issue of proving Theorem \ref{thm1} is establishing the a priori $H^2$ estimates on the strong solutions. Our analysis shows that once the $L^2$ estimate on $u=\partial_zv$ is obtained, all the required estimates of the other derivatives can be successfully achieved. Unfortunately, due to the lack of the vertical viscosity in the horizontal momentum equations, such $L^2$ estimate can not be obtained solely without the contribution of the other derivative of the velocity. We observe that in all the arguments of existing articles the full viscosities play essential role in obtaining the $L^2$ estimate on $u=\partial_zv$, and thus the existing arguments can not be applied in our case. Still caused by the lack of the vertical viscosity, one will encounter $\|v\|_\infty^2$ which appears as the coefficients in the higher order energy inequalities, in other words, the energy inequalities arrive to are all of the form
\begin{equation}
  \label{E1}\frac{d}{dt}f\leq C\|v\|_\infty^2f+\cdots,
\end{equation}
see section \ref{sec4} for the details. Since we do not know whether $\|v\|_\infty^2$ is integrable in time, we can not obtain the required estimate for $f$ directly from such kind of energy inequalities. Besides, recalling that $f$ will represent quantities involving the $H^2$ norm of $v$, though $\|v\|_\infty^2$ can be bounded by $f$, by the Sobolev embedding inequality, one will still be unable to obtain the global in time $H^2$ estimate for $v$ by the simple application of standard embedding inequality. Observe that if $\|v\|_\infty^2$ and $f$ have the relationship
$$
\|v\|_\infty^2\leq C\log f,
$$
then the previous energy inequality (\ref{E1}) implies the global in time estimate for $f$. To guarantee this relationship, thanks to the logarithmic Sobolev embedding inequality (Lemma \ref{log}, below), it suffices to prove that the $L^q$ norms of $v$ grow no faster than $C\sqrt q$. By taking advantage of the property that the pressure $p$ depends essentially only on the horizontal spatial variables, and using the Ladyzhenskaya type inequalities (Lemma \ref{lad}, below) for a class of integrals in 3D, one can successfully prove the desired growth of the $L^q$ norms of $v$, and thus obtain the a priori $H^2$ estimates, and hence the global regularity.

It should be pointed out that, due to the anisotropic structure of the momentum equation (\ref{1.8}) (the advection term $(v\cdot\nabla_H)v$ and the vertical advection term
$\left(\int_{-h}^z\nabla_H\cdot vd\xi\right)\partial_zv$ play different roles), the treatments for different derivatives of the same order will vary. More precisely, when dealing with the derivatives of the same order, the treatment of the vertical derivatives precedes that of the horizontal ones, because the estimates of the horizontal derivatives may depend on those of the vertical ones, see Proposition \ref{prop4.1} and Proposition \ref{prop4.2}, below, for the details. Accordingly, a system version of the classic Gronwall inequality, Lemma \ref{gronwall} below, is exploited to derive the a priori bounds from the energy inequalities.
We believe that this system version of the Gronwall inequality is interesting on its own, and in fact it can benefit us when using the energy approach, see Remark \ref{remark}, below.

As a corollary of Theorem \ref{thm1}, we have the following theorem, which states the global well-posedness of strong solutions to system (\ref{1-1.1})--(\ref{1-1.7}). Strong solutions to system (\ref{1-1.1})--(\ref{1-1.7}) are defined in the similar way as before.

\begin{theorem}
  \label{thm2}
Let $v_0$ and $T_0$ be two functions defined on $\Omega_0$, such that they are both periodic in $x$ and $y$. Denote by $v_0^{\text{ext}}$ and $T_0^{\text{ext}}$ the even and odd extensions in $z$ of $v_0$ and $T_0$, respectively. Suppose that $v_0^{\text{ext}}, T_0^{\text{ext}}\in H^1_{\text{per}}(\Omega)$. Then system (\ref{1-1.1})--(\ref{1-1.7}) has a unique local strong solution $(v,T)$.
\end{theorem}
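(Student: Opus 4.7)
The plan is to deduce Theorem \ref{thm2} from Theorem \ref{thm1} via an approximation argument, using the equivalence between systems on $\Omega_0$ and $\Omega$. First I would approximate the extended initial data: choose a sequence of periodic $H^2_{\text{per}}(\Omega)$ functions $(v_0^n, T_0^n)$, even and odd in $z$ respectively, with $(v_0^n, T_0^n)\to (v_0^{\text{ext}}, T_0^{\text{ext}})$ strongly in $H^1_{\text{per}}(\Omega)$. Standard mollification can be made to preserve both the spatial periodicity and the parity in $z$ (for instance by using a mollifier that is even in its $z$-variable and odd-extending/even-extending before smoothing). Applying Theorem \ref{thm1} to each $(v_0^n, T_0^n)$ yields unique global strong solutions $(v^n, T^n)$ on $\Omega\times(0,\infty)$ that preserve the parity in $z$ for all times.

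The central step is a uniform-in-$n$ $H^1$ a priori estimate on a short time interval $[0, T^*]$ with $T^*$ depending only on $\|(v_0^{\text{ext}}, T_0^{\text{ext}})\|_{H^1(\Omega)}$. Working directly with (\ref{1.8})--(\ref{1.10}), I would test against $v^n$, $-\Delta_H v^n$, $-\partial_z^2 v^n$, and analogous quantities for $T^n$, to control $\|(v^n, T^n)\|_{H^1}^2$. The hard nonlinear terms $(v\cdot\nabla_H)v$ and $\left(\int_{-h}^z\nabla_H\cdot v\,d\xi\right)\partial_z v$, together with the pressure contribution $\nabla_H\int_{-h}^z T\,d\xi$, are handled using the anisotropic Ladyzhenskaya-type inequalities (Lemma \ref{lad}) and integration by parts in the horizontal variables, exploiting the hydrostatic structure. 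The goal is an energy inequality of the schematic form $\frac{d}{dt}F \leq C(1+F)^\gamma$, which guarantees a uniform bound on $F$ up to some $T^* > 0$ independent of $n$. Combined with estimates on $\partial_t v^n$ and $\partial_t T^n$ obtained from the equations themselves, the Aubin--Lions lemma gives strong compactness.

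Passing to a subsequential limit, one obtains $(v, T)$ on $\Omega\times[0,T^*]$ with the required regularity and symmetry in $z$, satisfying (\ref{1.8})--(\ref{1.10}) almost everywhere. Restricting to $\Omega_0 = M\times(-h,0)$ and reversing the translations that led from (\ref{1-1.1})--(\ref{1-1.7}) to (\ref{1.1})--(\ref{1.7}) produces the sought local strong solution of the original system. Uniqueness is obtained by a standard energy argument on the difference of two strong solutions, using Grönwall once the $L^\infty_t H^1$ and $L^2_t H^2_H$ regularities are available. The principal obstacle is the uniform $H^1$ a priori estimate: the absence of vertical viscosity prevents any direct control of $\partial_z v^n$ through dissipation, so one must combine the $H^1$ estimates for the equation satisfied by $\partial_z v^n$ with anisotropic Sobolev inequalities, echoing in miniature the same difficulty that drives the proof of Theorem \ref{thm1} but now only on a short time interval, where the superlinear Grönwall inequality is sufficient and the delicate logarithmic Sobolev embedding used for global regularity is not needed.
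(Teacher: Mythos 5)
Your proposal is far more elaborate than what the paper intends here: the paper's entire proof of Theorem \ref{thm2} is the single observation that one applies Theorem \ref{thm1} to the extended data $(v_0^{\text{ext}}, T_0^{\text{ext}})$ on $\Omega$ and then restricts the resulting solution to $\Omega_0$, the equivalence of the two formulations having already been set up in the introduction. The hypothesis ``$v_0^{\text{ext}}, T_0^{\text{ext}}\in H^1_{\text{per}}(\Omega)$'' and the word ``local'' in the statement are almost certainly typographical remnants (compare Definition \ref{def1.1} and Theorem \ref{thm1}, both of which live at the $H^2$ level, and the fact that the conclusion should be \emph{global}, not local); read with $H^2$ data, the theorem is an immediate corollary and none of your machinery is needed.

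Taking the $H^1$ hypothesis at face value, as you do, your argument has a genuine gap at precisely the step you call central: the uniform-in-$n$ local $H^1$ a priori estimate. For this system the natural energy identity for $u=\partial_z v$ (Proposition \ref{prop4.1}(i)) carries the coefficient $\|v\|_\infty^2$, and in three dimensions $\|v\|_\infty$ is not controlled by $\|v\|_{H^1}$; there is no vertical dissipation to absorb the term $\left(\int_{-h}^z\nabla_H\cdot v\,d\xi\right)\partial_z v$, which is quadratic in first derivatives. This is exactly the obstruction the paper describes in the introduction, and the entire apparatus of Sections \ref{sec3}--\ref{sec5} (the $\sqrt{q}$ growth of $L^q$ norms, Lemma \ref{log}, Lemma \ref{gronwall}) exists because even with $H^2$ data the estimate does not close by a naive superlinear Gr\"onwall argument. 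Asserting that testing against $-\Delta_H v^n$ and $-\partial_z^2 v^n$ plus Lemma \ref{lad} yields $\frac{d}{dt}F\le C(1+F)^\gamma$ at the $H^1$ level is therefore an unproven claim, and the same difficulty infects your uniqueness step: the standard difference estimate for this nonlinearity requires $H^2$ regularity of at least one of the two solutions (the paper invokes Proposition 2.4 of \cite{CAOLITITI2} for this), so ``Gr\"onwall once the $L^\infty_t H^1$ regularity is available'' does not suffice. If you intend to prove a genuine $H^1$ local well-posedness result for this partially viscous system, that would be new mathematics requiring ideas not present in this paper; it is not a corollary of Theorem \ref{thm1}.
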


Theorem \ref{thm2} follows directly by applying Theorem \ref{thm1} with initial data $(v_0^{\text{ext}}, T_0^{\text{ext}})$ and restricting the solution on the sub-domain $\Omega_0$.

The rest of this paper is arranged as follows: in the next section, section \ref{sec2}, we collect some preliminary results which will be used in the subsequent sections; in section \ref{sec3}, we establish the a priori low order energy estimates, which are independent of the regularization parameter $\varepsilon$, for strong solutions to a regularized system, while the $\varepsilon$ independent higher order energy inequalities are given in section \ref{sec4}. In section \ref{sec5}, by the aid the a priori estimates and the higher order energy inequalities achieved in the previous two sections, we first establish the a priori $H^2$ estimates for the strong solutions to the regularized system, and then obtain the global well-posedness of strong solutions to system (\ref{1.8})-(\ref{IC}) (or equivalently system (\ref{1.1})--(\ref{1.7})) by standard approach. Section \ref{sec6} is an appendix in which an $N$-dimensional logarithmic Sobolev embedding inequality is established.

Throughout this paper, $C$ denotes a general constant which may be different from line to line.

\section{Preliminaries}
\label{sec2}

In this section we collect some preliminary results which will be used in the rest of this paper, and we start with the following Ladyzhenskaya type inequality in 3D for a class of integrals, which will be frequently used throughout the paper.

\begin{lemma} \label{lad}
The following inequalities hold true
\begin{align*}
&\int_M\left(\int_{-h}^h|\phi(x,y,z)|dz\right)\left(\int_{-h}^h|\varphi(x,y,z)\psi(x,y,z)|dz\right)dxdy\\
\leq&C\min\left\{\|\phi\|_2^{\frac{1}{2}}\left(\|\phi\|_2^{\frac{1}{2}}
+\|\nabla_H\phi\|_{2}^{\frac{1}{2}
}\right)\|\varphi\|_2\|\psi\|_2^{\frac{1}{2}}\left(
\|\psi\|_2^{\frac{1}{2}}+\|\nabla_H\psi\|_2^{\frac{1}{2}}\right)\right.,\\
&\left.\|\phi\|_2\|\varphi\|_2^{\frac{1}{2}}\left(\|\varphi\|_2^{\frac{1}{2}}+\|\nabla_H\varphi\|_{2}^{\frac{1}{2}}
\right)\|\psi\|_2^{\frac{1}{2}}\left(
\|\psi\|_2^{\frac{1}{2}}+\|\nabla_H\psi\|_2^{\frac{1}{2}}\right)\right\},
\end{align*}
and
\begin{align*}
&\int_M\left(\int_{-h}^h|\phi(x,y,z)|dz\right)\left(\int_{-h}^h|\varphi(x,y,z)
\nabla_H\Psi(x,y,z)|dz\right)dxdy\\
\leq&C\min\left\{\|\phi\|_2^{\frac{1}{2}}\left(\|\phi\|_2^{\frac{1}{2}}
+\|\nabla_H\phi\|_{2}^{\frac{1}{2}
}\right)\|\varphi\|_2\|\Psi\|_\infty^{\frac{1}{2}}\|\nabla_H^2\Psi\|_2^{\frac{1}{2}},\right.\\
&\left.\|\phi\|_2\|\varphi\|_2^{\frac{1}{2}}\left(\|\varphi\|_2^{\frac{1}{2}}+\|\nabla_H\varphi\|_{2}^{\frac{1}{2}}
\right)\|\Psi\|_\infty^{\frac{1}{2}}\|\nabla_H^2\Psi\|_2^{\frac{1}{2}}\right\},
\end{align*}
for every $\phi,\varphi,\psi,\Psi$ such that the right hand sides make sense and are finite. Moreover, if $\phi$ has the form $\phi=\nabla_Hf$, for some function $f$, then by the Poinc\'are inequality, the lower order term $\|\phi\|_2^{\frac{1}{2}}$ in the parentheses can be dropped in the above inequalities, and the same can also be said for $\varphi$ and $\psi$.
\end{lemma}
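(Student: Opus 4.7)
The plan is to slice the three-dimensional integrals in $z$ and reduce everything to two-dimensional estimates on $M$, where the classical Ladyzhenskaya inequality $\|f\|_{L^4(M)}\le C\|f\|_{L^2(M)}^{1/2}\|f\|_{H^1(M)}^{1/2}$ applies. To this end I would introduce the vertically averaged quantities
\[
\tilde\phi(x,y):=\int_{-h}^h|\phi|\,dz,\qquad \bar\varphi(x,y):=\Bigl(\int_{-h}^h|\varphi|^2dz\Bigr)^{1/2},\qquad \bar\psi(x,y):=\Bigl(\int_{-h}^h|\psi|^2dz\Bigr)^{1/2},
\]
so that Cauchy--Schwarz in $z$ gives $\int_{-h}^h|\varphi\psi|\,dz\le \bar\varphi\,\bar\psi$. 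The left-hand side of the first inequality is thus dominated by $\int_M\tilde\phi\,\bar\varphi\,\bar\psi\,dxdy$, and the two possible 2D Hölder splittings of type $L^4(M)\times L^2(M)\times L^4(M)$ (placing the $L^2$ factor on $\bar\varphi$ or on $\tilde\phi$) will produce the two alternatives inside the $\min\{\cdot,\cdot\}$.

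The individual factors are then routine. Plainly $\|\bar\varphi\|_{L^2(M)}=\|\varphi\|_2$ and $\|\tilde\phi\|_{L^2(M)}\le C\|\phi\|_2$ by Cauchy--Schwarz in $z$. For the $L^4(M)$ factors, differentiating $\bar\varphi^{\,2}=\int|\varphi|^2dz$ gives the pointwise bound $|\nabla_H\bar\varphi|\le(\int|\nabla_H\varphi|^2dz)^{1/2}$, so $\|\nabla_H\bar\varphi\|_{L^2(M)}\le\|\nabla_H\varphi\|_2$; combined with 2D Ladyzhenskaya and the elementary $(a+b)^{1/2}\le a^{1/2}+b^{1/2}$, this yields $\|\bar\varphi\|_{L^4(M)}\le C\|\varphi\|_2^{1/2}(\|\varphi\|_2^{1/2}+\|\nabla_H\varphi\|_2^{1/2})$, and similarly for $\|\bar\psi\|_{L^4(M)}$ and $\|\tilde\phi\|_{L^4(M)}$ (using $|\nabla_H\tilde\phi|\le\int|\nabla_H\phi|\,dz$). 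Assembling these bounds yields the first inequality.

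For the second inequality the only change is $\psi\mapsto\nabla_H\Psi$: with $G(x,y):=(\int_{-h}^h|\nabla_H\Psi|^2dz)^{1/2}$, Cauchy--Schwarz in $z$ bounds the inner integral by $\bar\varphi\,G$, and the same 2D Hölder applies. What remains -- and where I expect the only real subtlety -- is proving
\[
\|G\|_{L^4(M)}\le C\|\Psi\|_\infty^{1/2}\|\nabla_H^2\Psi\|_2^{1/2};
\]
a direct use of Ladyzhenskaya on $\nabla_H\Psi$ would produce $\|\nabla_H\Psi\|_2$ instead of $\|\Psi\|_\infty$, which is not what the lemma asks for. To extract the $L^\infty$ factor I would proceed in two steps. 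First, Minkowski's integral inequality moves the $L^2(M)$ norm under the $z$-integral,
\[
\|G\|_{L^4(M)}^2=\Bigl\|\int_{-h}^h|\nabla_H\Psi|^2\,dz\Bigr\|_{L^2(M)}\le \int_{-h}^h\|\nabla_H\Psi(\cdot,\cdot,z)\|_{L^4(M)}^2\,dz.
\]
Second, for each fixed $z$ I would exploit periodicity in $x,y$ via the integration-by-parts identity $\int_M(\partial_i\Psi)^4\,dxdy=-3\int_M\Psi\,(\partial_i\Psi)^2\,\partial_i^2\Psi\,dxdy$, and Hölder in $L^\infty\times L^4\times L^2$ gives $\|\partial_i\Psi(\cdot,\cdot,z)\|_{L^4(M)}^2\le C\|\Psi(\cdot,\cdot,z)\|_{L^\infty(M)}\|\partial_i^2\Psi(\cdot,\cdot,z)\|_{L^2(M)}$ for $i=1,2$. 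A final Cauchy--Schwarz in $z$ then produces $\|G\|_{L^4(M)}^2\le C\|\Psi\|_\infty\|\nabla_H^2\Psi\|_2$, as required.

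The closing remark about dropping the lower-order terms follows from the Poincaré inequality applied slice by slice in $z$: if $\phi=\nabla_Hf$ for some periodic $f$, then each horizontal slice of $\phi$ has zero mean on $M$, so $\|\phi(\cdot,\cdot,z)\|_{L^2(M)}\le C\|\nabla_H\phi(\cdot,\cdot,z)\|_{L^2(M)}$, and integrating in $z$ absorbs $\|\phi\|_2^{1/2}$ into $\|\nabla_H\phi\|_2^{1/2}$ at every step where it appears; the identical argument works for $\varphi$ and $\psi$.
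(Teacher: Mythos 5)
Your proposal is correct and follows essentially the same route as the paper: Cauchy--Schwarz/Minkowski in $z$ to reduce to a two-dimensional product on $M$, the two H\"older splittings of type $L^4\times L^2\times L^4$ to produce the minimum, the 2D Ladyzhenskaya inequality for the $L^4$ factors, and the interpolation $\|\nabla_H\Psi\|_{L^4(M)}^2\leq C\|\Psi\|_{L^\infty(M)}\|\nabla_H^2\Psi\|_{L^2(M)}$ (which the paper quotes as a Gagliardo--Nirenberg inequality and you derive by integration by parts) for the $\nabla_H\Psi$ factor. The only cosmetic difference is that you apply Ladyzhenskaya to the vertically averaged functions via the pointwise bounds on their horizontal gradients, while the paper applies it slice by slice in $z$ and then integrates; both are valid and the remaining steps, including the Poincar\'e remark, match.
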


\begin{proof}
Similar inequalities has been established in \cite{CAOTITI1} and \cite{CAOTITI3}. However, for completeness, we sketch the proofs here, and the ideas used here are the same as in those papers. By the H\"older and Minkowski inequalities, we deduce
\begin{align*}
  &\int_M\left(\int_{-h}^h|\phi(x,y,z)| dz\right)\left(\int_{-h}^h|\varphi(x,y,z)\psi(x,y,z)| dz\right)dxdy\\
  \leq&\int_M\left(\int_{-h}^h|\phi(x,y,z)|dz\right)\left(\int_{-h}^h|\varphi(x,y,z)
  |^2dz\right)^{\frac{1}{2}}
  \left(\int_{-h}^h|\psi(x,y,z)|^2dz\right)^{\frac{1}{2}}dxdy\\
  \leq&\min\left\{\left(\int_M\left|\int_{-h}^h|\phi|dz\right|^2dxdy\right)^{\frac{1}{2}}
  \left(\int_M\left|\int_{-h}^h|\varphi|^2dz\right|^2dxdy\right)^{\frac{1}{4}},\right.\\
  &\left.\left(\int_M\left|\int_{-h}^h|\phi|dz\right|^4dxdy\right)^{\frac{1}{4}}
  \left(\int_M\int_{-h}^h|\varphi|^2dzdxdy\right)^{\frac{1}{2}}\right\} \left(\int_M\left|\int_{-h}^h|\psi|^2dz\right|^2dxdy\right)^{\frac{1}{4}}\\
  \leq&C\min\left\{\|\phi\|_2\left(\int_{-h}^h\|\varphi\|_{4,M}^2dz\right)^{\frac{1}{2}},
  \int_{-h}^h\|\phi\|_{4,M}dz\|\varphi\|_2
  \right\}\left(\int_{-h}^h\|\psi\|_{4,M}^2dz\right)^{\frac{1}{2}}.
\end{align*}
Similarly
\begin{align*}
  &\int_M\left(\int_{-h}^h|\phi(x,y,z)| dz\right)\left(\int_{-h}^h|\varphi(x,y,z)\nabla_H\Psi(x,y,z)| dz\right)dxdy\\
  \leq&C\min\left\{\|\phi\|_2\left(\int_{-h}^h\|\varphi\|_{4,M}^2dz\right)^{\frac{1}{2}},
  \int_{-h}^h\|\phi\|_{4,M} dz \|\varphi\|_2
  \right\}\left(\int_{-h}^h\|\nabla_H\Psi\|_{4,M}^2dz\right)^{\frac{1}{2}}.
\end{align*}
It follows from the 2D Ladyzhenskaya and Gagliardo-Nirenberg inequalities that
\begin{align*}
&\int_{-h}^h\|\phi\|_{4,M}dz\leq C\int_{-h}^h\|\phi\|_{2,M}^{\frac{1}{2}}\|\phi\|_{H^1(M)}^{\frac{1}{2}}dz\leq C\|\phi\|_2^{\frac{1}{2}}\left(\|\phi\|_2^{\frac{1}{2}}+\|\nabla_H\phi\|_2^{\frac{1}{2}}\right),\\
  &\int_{-h}^h\|\psi\|_{4,M}^2dz\leq C\int_{-h}^h\|\psi\|_{2,M}\|\psi\|_{H^1(M)}dz\leq C\|\psi\|_2(\|\psi\|_2+\|\nabla_H\psi\|_2),\\
  &\int_{-h}^h|\nabla_H\Psi\|_{4,M}dz\leq C\int_{-h}^h\|\Psi\|_{\infty,M}\|\Psi\|_{H^2(M)}dz\leq C\|\Psi\|_\infty\|\nabla_H^2\Psi\|_2.
\end{align*}
The conclusions follow from combining the previous five inequalities.
\end{proof}

The following logarithmic Sobolev inequality, which bounds the $L^\infty$ norm in terms of the $L^q$ norms up to the logarithm of the norms of the high order derivatives, will play an important role in establishing the a priori $H^2$ estimates later. Some relevant inequalities can be found in \cite{CAOWU,CAOFARHATTITI,DANCHINPAICU}, where the two-dimensional case is considered.

\begin{lemma}
  \label{log}
Let $F\in W^{1,p}(\Omega)$, with $p>3$, be a periodic function. Then the following inequality holds true
\begin{equation*}
  \|F\|_\infty\leq C_{ {p},\lambda}\max\left\{1,\sup_{r\geq2}\frac{\|F\|_r}{r^\lambda}\right\}
  \log^\lambda
  (\|F\|_{W^{1, {p}}(\Omega)}+e),
\end{equation*}
for any $\lambda>0$.
\end{lemma}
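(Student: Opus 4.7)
The plan follows the Brezis–Gallouet–Wainger strategy: first establish a Gagliardo–Nirenberg interpolation whose Sobolev exponent scales like $1/r$, and then optimize in $r$ by choosing $r$ proportional to $\log\|F\|_{W^{1,p}}$, so that the Sobolev factor collapses into a logarithm.

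The first step is the interpolation: on the periodic box $\Omega\subset\mathbb{R}^3$, for every $r\geq 2$ and every periodic $F\in W^{1,p}(\Omega)$,
$$\|F\|_\infty \leq C_p\,\|F\|_r^{1-\theta_r}\,\|F\|_{W^{1,p}(\Omega)}^{\theta_r},\qquad \theta_r := \frac{Np}{Np+r(p-N)},$$
with $N=3$ and $C_p$ independent of $r$. This can be obtained by a Morrey-type pointwise argument: for $x_0\in\Omega$ and $\rho>0$ small, H\"older on the ball $B_\rho(x_0)$ together with the Morrey embedding $W^{1,p}\hookrightarrow C^{0,1-N/p}$ (valid because $p>N$) gives $|F(x_0)|\leq C\rho^{-N/r}\|F\|_r + C_p\,\rho^{1-N/p}\|\nabla F\|_p$, and minimizing in $\rho$ yields the claim. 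The apparent $r^{\theta_r}$ factor produced by the optimization is uniformly bounded because $\theta_r\log r \to 0$.

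For the second step, set $A := \|F\|_{W^{1,p}(\Omega)}+e$ and $M := \max\{1,\,\sup_{r\geq 2}\|F\|_r/r^\lambda\}$, and choose
$$r_0 := \frac{Np}{p-N}\,\log A.$$
Since $Np/(p-N)>N=3$ and $\log A\geq 1$, we have $r_0\geq 3$, so the interpolation is admissible at $r=r_0$. A direct calculation gives $\theta_{r_0}=1/(1+\log A)$, hence $A^{\theta_{r_0}}=\exp(\log A/(1+\log A))\leq e$. Substituting the hypothesis $\|F\|_{r_0}\leq Mr_0^\lambda$ into the interpolation, and using $M\geq 1$ and $r_0\geq 1$ to bound $M^{1-\theta_{r_0}}\leq M$ and $r_0^{\lambda(1-\theta_{r_0})}\leq r_0^\lambda$, one obtains
$$\|F\|_\infty \leq C_p\bigl(Mr_0^\lambda\bigr)^{1-\theta_{r_0}}A^{\theta_{r_0}} \leq e\,C_p\,M\,r_0^\lambda \leq C_{p,\lambda}\,M\,\log^\lambda A,$$
which is the stated inequality.

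The main obstacle is ensuring that the constant in the first-step interpolation is uniform in $r$. Since $r$ is ultimately chosen of order $\log\|F\|_{W^{1,p}}$, any polynomial-in-$r$ factor in the interpolation constant would propagate to an extra $(\log A)^k$ in the final estimate, shifting the exponent from $\lambda$ to $\lambda+k$. The Morrey-type optimization above (or, equivalently, a Littlewood–Paley argument using Bernstein's inequalities at both low and high frequencies) delivers exactly the uniform control in $r$ that is required; once this is in hand, the logarithmic optimization in the second step is a one-line computation.
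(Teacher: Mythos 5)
Your proof is correct, and it reaches the lemma by a genuinely different key estimate than the paper, although both arguments are of Brezis--Gallouet--Wainger type. The paper (Lemma \ref{lemapp} in the appendix) first localizes with a cutoff exactly as you would need to, and then derives the family of interpolation inequalities $|f(0)|\leq C_{N,p}\,\|f\|_{(q-1)\alpha}^{1-1/q}\|\nabla f\|_p^{1/q}$ by representing $|f|^q$ through the Newtonian potential, integrating by parts once, and applying a three-exponent H\"older inequality with $\frac{1}{\alpha}+\frac{1}{\beta}+\frac{1}{p}=1$ and $(N-1)\beta<N$; it then computes $\inf_{q}\bigl(q^\lambda X^{1/q}\bigr)\sim\log^\lambda X$. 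Your route replaces the potential-theoretic step by the Morrey/mean-value estimate $|F(x_0)|\leq C\rho^{-N/r}\|F\|_r+C_p\rho^{1-N/p}\|\nabla F\|_p$ optimized in $\rho$, and replaces the infimum over $q$ by the explicit choice $r_0\propto\log A$; these are equivalent reparametrizations of the same optimization (the paper's $1/q$ is your $\theta_r$ up to the change of variable $r=(q-1)\alpha$). Your version is arguably more elementary and makes the crucial uniformity issue more transparent: you correctly identify that the only danger is a constant growing in $r$, and that the $r^{\theta_r}=e^{\theta_r\log r}\to1$ factor from the $\rho$-optimization is harmless. Two small points to tidy up if you write this in full: the minimization in $\rho$ is constrained to $\rho\leq 1$ (or to the scale on which periodicity lets you control $\|\nabla F\|_{L^p(B_\rho)}$ by $\|\nabla F\|_{L^p(\Omega)}$), so in the regime where the unconstrained minimizer exceeds $1$ you should take $\rho=1$ and absorb the resulting $\|F\|_r$ term using $\|F\|_r\leq C\|F\|_{W^{1,p}}$ (uniformly in $r\geq2$ on the bounded domain); and it is cleaner to keep $\|F\|_{W^{1,p}}$ rather than $\|\nabla F\|_p$ in the second factor so that the degenerate case $\nabla F\equiv0$ causes no trouble. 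The paper's appendix lemma also carries a free scaling parameter $R$, which your argument does not reproduce, but only $R=1$ is used for Lemma \ref{log}, so nothing is lost.
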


\begin{proof}
Extending $F$ periodically to the whole space. Take a function $\phi\in C_0^\infty(\mathbb R^3)$, such that $\phi\equiv1$ on $\Omega$, and $0\leq\phi\leq1$ on $\mathbb R^3$. Set $f=F\phi$. By Lemma \ref{lemapp} (choose $R=1$ there) in the appendix, it holds that
\begin{equation*}
  \|f\|_{L^\infty(\mathbb R^3)}\leq C_{ {p},\lambda}\max\left\{1,\sup_{r\geq2}\frac{\|f\|_{L^r(\mathbb R^3)}}{r^\lambda}\right\}
  \log^\lambda
  (\|f\|_{W^{1, {p}}(\mathbb R^3)}+e).
\end{equation*}
Noticing that
$$
\|F\|_\infty\leq\|f\|_{L^\infty(\mathbb R^3)}, \quad\|f\|_{L^r(\mathbb R^3)}\leq C\|F\|_r,\quad\|f\|_{W^{1,p}(\mathbb R^3)}\leq C\|F\|_{W^{1,p}(\Omega)},
$$
we deduce
\begin{align*}
  \|F\|_\infty\leq&\|f\|_{L^\infty(\mathbb R^3)}\leq C_{ {p},\lambda}\max\left\{1,\sup_{r\geq2}\frac{\|f\|_{L^r(\mathbb R^3)}}{r^\lambda}\right\}
  \log^\lambda
  (\|f\|_{W^{1, {p}}(\mathbb R^3)}+e)\\
  \leq&C_{ {p},\lambda}\max\left\{1,\sup_{r\geq2}\frac{\|F\|_r}{r^\lambda}\right\}
  \log^\lambda
  (\|F\|_{W^{1, {p}}(\Omega)}+e),
\end{align*}
proving the conclusion.
\end{proof}

The following lemma is a system version of the classic Gronwall inequality.

\begin{lemma}
  \label{gronwall}
Let $m(t), K(t),
A_i(t)$ and $B_i(t)$ be nonnegative functions, such that $A_i\geq e,$ are absolutely continuous, for $i=1,\cdots,n, K\in L_{\text{loc}}^1([0,\infty))$, and
\begin{equation*}
   m(t)\leq K(t)\log\sum_{i=1}^nA_i(t).
\end{equation*}
Given a positive time $\mathcal T$, and suppose that
\begin{align}
&\frac{d}{dt}A_1(t)+B_1(t)\leq m(t)A_1(t),\label{e0}\\
&\frac{d}{dt}A_i(t)+B_i(t)\leq m(t)A_i(t)+\zeta A_{i-1}^\alpha(t)B_{i-1}(t),\quad i=2,\cdots,n,\label{ei}
\end{align}
for any $t\in(0,\mathcal T)$, where $\alpha\geq1$ and $\zeta\geq1$ are two constants.
Then it holds that
$$
\sum_{i=1}^nA_i(t)+\sum_{i=1}^n\int_0^tB_i(s)ds\leq Q(t),\quad \forall t\in[0,\mathcal T),
$$
where $Q$ is a continuous function on $[0,\infty)$ which is determined by $A_i(0), i=1,\cdots,n,$ and $K$, given explicitly in equation (\ref{E2}) below.
\end{lemma}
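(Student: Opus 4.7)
The plan is to track the coupled system through the single scalar quantity $J(t):=\int_0^t m(s)\,ds$, to prove by induction on $i$ that $A_i$ grows at most like $\exp\bigl((\alpha+1)^{i-1}J(t)\bigr)$, and then to use the hypothesis $m\le K\log\sum_j A_j$ to reduce the problem to a linear scalar differential inequality for $J$ itself. As a first step, setting $y_i:=A_ie^{-J}$, one finds from $J'=m$ that $y_1'\le 0$ and $y_i'\le\zeta A_{i-1}^\alpha B_{i-1}e^{-J}$ for $i\ge 2$, giving
\[
A_1(t)\le A_1(0)e^{J(t)},\qquad A_i(t)\le e^{J(t)}\Bigl[A_i(0)+\zeta\!\int_0^t\! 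A_{i-1}^\alpha(s)B_{i-1}(s)e^{-J(s)}\,ds\Bigr].
\]

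The heart of the argument is then to bound the remaining integral by $e^{(p_i-1)J(t)}$ times a constant, with $p_i:=(\alpha+1)^{i-1}$. I would do this by proving inductively the paired statements
\[
A_j(t)\le C_j\,e^{p_jJ(t)},\qquad \int_0^t e^{\beta J(s)}B_j(s)\,ds\le\widetilde C_{j,\beta}\,e^{(p_j+\beta)J(t)}\quad(\beta\ge 0),
\]
with explicit constants $C_j,\widetilde C_{j,\beta}$ depending only on $\alpha$, $\zeta$ and $A_1(0),\dots,A_j(0)$. The second bound is obtained by multiplying the $j$-th differential inequality by $e^{\beta J}$, recognising $\tfrac{d}{dt}(A_je^{\beta J})$, and evaluating the $m$-terms via the identity $\int_0^t J'(s)e^{\gamma J(s)}\,ds=\gamma^{-1}(e^{\gamma J(t)}-1)$; the first bound then follows directly from Step~1 applied with $\beta=\alpha p_{j-1}-1\ge 0$, and the recursion $p_j=(\alpha+1)p_{j-1}$ is precisely what makes the exponents on both sides match.

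Summing the first estimate over $i$ yields $F(t):=\sum_iA_i(t)\le C_*e^{(\alpha+1)^{n-1}J(t)}$, hence $\log F(t)\le\log C_*+(\alpha+1)^{n-1}J(t)$. Plugging this into $J'=m\le K\log F$ produces the linear differential inequality $J'(t)\le(\alpha+1)^{n-1}K(t)J(t)+K(t)\log C_*$, whose explicit Gronwall solution in terms of $\int_0^tK$ is the continuous function $Q$ announced in the lemma; the bounds on $\int_0^t B_i$ then fall out by integrating the original differential inequalities and inserting the just-obtained bounds on $A_i$ and $J$. I expect the main obstacle to be the bookkeeping in the inductive step above: the naive estimate $\int A_{i-1}^\alpha B_{i-1}e^{-J}\le\sup_sA_{i-1}^\alpha(s)\cdot\int_0^t B_{i-1}$, iterated $n$ times, produces \emph{nested} exponentials in $J$, making the final inequality for $J$ nonlinear enough to permit finite-time blow-up and thus destroying the global bound. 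The weighted integration-by-parts is designed precisely to replace that tower by the single geometric exponent $(\alpha+1)^{i-1}$, which is the fastest growth compatible with a linear closure in $J$.
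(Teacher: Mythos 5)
Your proof is correct, and it takes a genuinely different route from the paper's. The paper never introduces $J(t)=\int_0^t m$; instead it builds a single nested Lyapunov functional $\mathcal A_1=A_1$, $\mathcal A_i=A_i+\zeta\mathcal A_{i-1}^{\alpha+1}$, $\mathcal B_i=B_i+\mathcal B_{i-1}$, observing that multiplying the $(i-1)$-st combined inequality by $\zeta(\alpha+1)\mathcal A_{i-1}^{\alpha}$ produces on the left exactly the dissipation $\zeta(\alpha+1)\mathcal A_{i-1}^{\alpha}\mathcal B_{i-1}$ needed to absorb the forcing $\zeta A_{i-1}^{\alpha}B_{i-1}$, at the price of replacing $m$ by $(\alpha+1)m$; after $n$ steps one has $\frac{d}{dt}\mathcal A_n+\mathcal B_n\le(\alpha+1)^{n-1}m\mathcal A_n$ with $m\le K\log\mathcal A_n$, and a single log-Gronwall step closes the argument. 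Your version decouples the two roles of $m$: the weighted integrating factors $e^{\beta J}$ carry the cascade, and the hypothesis $m\le K\log\sum A_i$ is used only at the very end to close a \emph{linear} inequality for $J$. Both yield the same exponent $(\alpha+1)^{n-1}$ and the same double-exponential bound; the paper's is shorter, while yours is more modular and makes the mechanism (why $B_{i-1}$ on the left at level $i-1$ tames $A_{i-1}^{\alpha}B_{i-1}$ at level $i$) more transparent. Two small remarks: your final $Q$ is an explicit continuous function of $A_i(0)$ and $\int_0^tK$, but not literally the one in (\ref{E2}), which is harmless for the lemma's substance though not for its literal wording; and your motivating claim that the naive bound $\sup_sA_{i-1}^{\alpha}\cdot\int_0^tB_{i-1}$ produces nested exponentials is an overstatement --- it only inflates the geometric exponent $p_i$ to $1+(\alpha+1)q_{i-1}$, which still closes linearly in $J$ --- but this side remark does not affect the validity of the proof you actually give.
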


\begin{proof}
Multiplying inequality (\ref{e0}) by $\zeta(\alpha+1)A_1^\alpha$ yields
$$
\zeta\frac{d}{dt}A_1^{\alpha+1}+\zeta(\alpha+1)A_1^\alpha B_1\leq \zeta(\alpha+1)mA_1^{\alpha+1}.
$$
Summing this with (\ref{ei}) for $i=2$ leads to
$$
\frac{d}{dt}(A_2+\zeta A_1^{\alpha+1})+\zeta\alpha A_1^\alpha B_1+B_2\leq(\alpha+1)m(A_2+\zeta A_1^{\alpha+1}).
$$
Set $\mathcal A_1=A_1, \mathcal A_2=A_2+\zeta\mathcal A_1^{\alpha+1}, \mathcal B_1=B_1$ and $\mathcal B_2=\mathcal B_1+B_2$, then the above inequality gives
\begin{equation*}
\frac{d}{dt}\mathcal A_2+\mathcal B_2\leq(\alpha+1)m\mathcal A_2.
\end{equation*}
Multiplying the above inequality by $\zeta(\alpha+1)\mathcal A_2^\alpha$ yields
$$
\zeta\frac{d}{dt}\mathcal A_2^{\alpha+1}+\zeta(\alpha+1)\mathcal A_2^\alpha\mathcal B_2\leq \zeta(\alpha+1)^2m\mathcal A_2^{\alpha+1}.
$$
Summing up this inequality with ($\ref{ei}$) for $i=3$ implies
$$
\frac{d}{dt}(A_3+\zeta\mathcal A_2^{\alpha+1})+\zeta\alpha\mathcal A_2^\alpha\mathcal B_2+B_3\leq(\alpha+1)^2m(A_3+\zeta\mathcal A_2^{\alpha+1}).
$$
Set $\mathcal A_3=A_3+\zeta\mathcal A_2^{\alpha+1}$ and $\mathcal B_3=B_3+\mathcal B_2$, then we have
\begin{equation*}
\frac{d}{dt}\mathcal A_3+\mathcal B_3\leq(\alpha+1)^2m\mathcal A_3.
\end{equation*}

Continuing the previous procedure inductively, we obtain
\begin{eqnarray*}
&\mathcal A_i=A_i+\zeta\mathcal A_{i-1}^{\alpha+1},\quad\mathcal B_i=B_i+\mathcal B_{i-1}, \quad i=2,\cdots,N,\\
&\frac{d}{dt}\mathcal A_i+\mathcal B_i\leq (\alpha+1)^{i-1}m\mathcal A_i,\quad i=1,2,\cdots,N,
\end{eqnarray*}
in particular, it holds that
\begin{equation}
\frac{d}{dt}\mathcal A_N+\mathcal B_N\leq(\alpha+1)^{N-1}m\mathcal A_N.\label{EN}
\end{equation}
By the assumption on $m(t)$, the above inequality implies
\begin{equation*}
\frac{d}{dt}\mathcal A_{N}(t)\leq (\alpha+1)^{N-1}K(t)\mathcal A_{N}(t)\log\mathcal A_{N}(t).
\end{equation*}
Therefore
\begin{equation*}
\frac{d}{dt}\log\mathcal A_{N}(t)\leq (\alpha+1)^{N-1}K(t)\log\mathcal A_{N},
\end{equation*}
from which, we obtain
\begin{equation*}
 \log\mathcal A_{N}\leq e^{(\alpha+1)^{N-1}\int_0^tK(s)ds}\log\mathcal A_N(0)=:q_0(t),
\end{equation*}
and
$$
\mathcal A_{N}(t)\leq e^{q_0(t)}=:q_1(t),
$$
for $t\in[0,\mathcal T)$.
Note that $q_1$ is an increasing function on $[0,\infty)$. Thanks to this estimate, it follows from integrating inequality (\ref{EN}) with respect to $t$ that
\begin{align*}
  &\mathcal A_N(t)+\int_0^t\mathcal B_N(s)ds\leq(\alpha+1)^{N-1}\int_0^tm(s)\mathcal A_N(s)ds\\
  \leq&(\alpha+1)^{N-1}\int_0^t\mathcal A_N(s)K(s)\log\mathcal A_N(s)ds\\
  \leq&(\alpha+1)^{N-1}\int_0^tK(s)dsq_1(t)q_0(t)=Q(t),
\end{align*}
for all $t\in[0,\mathcal T)$, where
\begin{equation}
  Q(t)=(\alpha+1)^{N-1}\int_0^tK(s)dsq_1(t)q_0(t). \label{E2}
\end{equation}
From which
one obtains the conclusion.
\end{proof}

\begin{remark}
\label{remark}
Lemma \ref{gronwall} indicates that when doing the energy estimates, step by step,
the quantities appear on the
left-hand side in the previous steps can be treated freely as if they were a priori bounded, provided the coefficient term does not grow too fast compared to the quantities under consideration (no faster than the logarithm of the summation of them). This gives us a large room to handle some hard terms in the current step.
\end{remark}

We also need the following Aubin-Lions lemma.

\begin{lemma}\label{AL}
(Aubin-Lions Lemma, See Simon \cite{Simon} Corollary 4) Assume that $X, B$ and $Y$ are three Banach spaces, with $X\hookrightarrow\hookrightarrow B\hookrightarrow Y.$ Then it holds that

(i) If $F$ is a bounded subset of $L^p(0, T; X)$, where $1\leq p<\infty$, and $\frac{\partial F}{\partial t}=\left\{\frac{\partial f}{\partial t}|f\in F\right\}$ is bounded in $L^1(0, T; Y)$, then $F$ is relatively compact in $L^p(0, T; B)$.

(ii) If $F$ is bounded in $L^\infty(0, T; X)$ and $\frac{\partial F}{\partial t}$ is bounded in $L^r(0, T; Y)$, where $r>1$, then $F$ is relatively compact in $C([0, T]; B)$.
\end{lemma}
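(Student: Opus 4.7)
The plan is to reduce both parts to classical compactness principles---Fr\'echet--Kolmogorov in $L^p(0,T;B)$ for (i) and Arzel\`a--Ascoli in $C([0,T];B)$ for (ii)---using a single bridging estimate between the three spaces. The bridge is Ehrling's interpolation inequality: since $X \hookrightarrow\hookrightarrow B \hookrightarrow Y$, for every $\eta > 0$ there is a constant $C_\eta$ with $\|u\|_B \leq \eta \|u\|_X + C_\eta \|u\|_Y$ for all $u \in X$. I would prove this by a contradiction-and-extraction argument: if it fails, normalized sequences $u_n$ with $\|u_n\|_X=1$ would satisfy $\|u_n\|_B\geq\eta_0>0$ and $\|u_n\|_Y\to 0$; the compact embedding $X\hookrightarrow\hookrightarrow B$ extracts a $B$-limit $u$, while $u=0$ in $Y$ and $B\hookrightarrow Y$ force $u=0$ in $B$, contradicting $\|u_n\|_B\geq\eta_0$.

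For part (i), I would verify the Fr\'echet--Kolmogorov criterion on the bounded family $F$, reducing matters to showing $\sup_{f\in F}\|f(\cdot+h)-f\|_{L^p(0,T-h;B)}\to 0$ as $h\to 0^+$. Apply Ehrling pointwise to the translate difference,
\[
\|f(t+h)-f(t)\|_B \leq \eta\,\|f(t+h)-f(t)\|_X + C_\eta\!\int_t^{t+h}\!\|\partial_s f(s)\|_Y\,ds,
\]
raise to the $p$-th power, integrate in $t$, and split via $(a+b)^p\leq 2^{p-1}(a^p+b^p)$. The $X$-piece is controlled by $C\eta^p$ times the uniform $L^p(0,T;X)$-bound. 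For the $Y$-piece, writing $g_f(s)=\|\partial_s f(s)\|_Y\in L^1(0,T)$ with uniform $L^1$-bound, the trivial pointwise bound $\int_t^{t+h}g_f\leq\|g_f\|_{L^1}$ combined with Fubini yields $\int_0^{T-h}\!\bigl(\int_t^{t+h}g_f\bigr)^p dt \leq \|g_f\|_{L^1}^{p-1}\!\cdot h\,\|g_f\|_{L^1}$, which is uniformly $O(h)$. Choose $\eta$ small first, then $h$ small, to conclude.

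For part (ii), the $L^r(0,T;Y)$-bound on $\partial_t f$ with $r>1$ yields, via the fundamental theorem of calculus and H\"older, equi-H\"older continuity in $Y$: $\|f(t)-f(s)\|_Y\leq C|t-s|^{1-1/r}$ uniformly in $f\in F$. Combining with Ehrling and the $L^\infty(0,T;X)$-bound,
\[
\|f(t)-f(s)\|_B \leq 2\eta\,\|f\|_{L^\infty(0,T;X)} + C_\eta\,C\,|t-s|^{1-1/r},
\]
delivers equicontinuity in $C([0,T];B)$ upon choosing $\eta$ small then $|t-s|$ small. Pointwise precompactness of $\{f(t):f\in F\}$ in $B$ at each fixed $t$ follows from the $L^\infty(0,T;X)$-bound---upgraded from a.e.~to every $t$ via the $Y$-continuous representative furnished by $\partial_t f\in L^r$ and the lower semicontinuity of $\|\cdot\|_X$ along a $Y$-converging a.e.-bounded sequence---combined with the compact embedding $X\hookrightarrow\hookrightarrow B$. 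Arzel\`a--Ascoli then concludes.

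The main obstacle is the very weak hypothesis $\partial_t F\subset L^1(0,T;Y)$ in part (i): it yields neither a pointwise-in-$t$ nor a H\"older bound on $\|f(t+h)-f(t)\|_Y$, so the standard Hölder-type shortcut available for $r>1$ (as in part (ii)) is unavailable. The fix is to extract the required uniform vanishing from the mere equi-integrability of $\{g_f\}$ through the Fubini-plus-pointwise-bound maneuver above, which effectively trades one factor of $\|g_f\|_{L^1}$ for a factor of $h$. A secondary subtlety, arising in both parts, is justifying that $f\in L^p(0,T;X)$ with $\partial_t f\in L^1(0,T;Y)$ admits a well-defined translation (equivalently, that $f\in C([0,T];Y)$); this follows from the embedding $W^{1,1}(0,T;Y)\hookrightarrow C([0,T];Y)$ applied to each $f\in F$.
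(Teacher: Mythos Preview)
The paper does not prove this lemma at all; it merely cites Simon's Corollary~4 and moves on. Your proposal therefore supplies an argument the paper omits, and the outline you give---Ehrling's inequality combined with Fr\'echet--Kolmogorov for (i) and Arzel\`a--Ascoli for (ii)---is indeed the standard route (essentially Simon's own).

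There is, however, a genuine gap in your treatment of part~(i). You write that verifying the Fr\'echet--Kolmogorov criterion ``reduces matters to showing'' the uniform translation equicontinuity in $L^p(0,T-h;B)$. For \emph{scalar}-valued $L^p$ on a bounded interval that would suffice, but for Banach-space-valued $L^p$ it does not: translation equicontinuity alone cannot produce compactness, since the values $f(t)$ range over an infinite-dimensional space $B$. Simon's compactness criterion (his Theorem~1) requires, in addition to translation equicontinuity, that the sets $\bigl\{\int_{t_1}^{t_2} f(t)\,dt : f\in F\bigr\}$ be relatively compact in $B$ for each $0<t_1<t_2<T$. You never address this second condition. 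It is easy to supply here---by H\"older, $\bigl\|\int_{t_1}^{t_2} f\,dt\bigr\|_X \leq (t_2-t_1)^{1-1/p}\|f\|_{L^p(0,T;X)}$ is uniformly bounded, and the compact embedding $X\hookrightarrow\hookrightarrow B$ finishes it---but as written your argument for (i) is incomplete.

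A smaller remark on part~(ii): your justification of the pointwise $X$-bound at \emph{every} $t$ invokes ``lower semicontinuity of $\|\cdot\|_X$ along a $Y$-converging a.e.-bounded sequence,'' which as stated would require reflexivity of $X$. The correct mechanism, valid for arbitrary Banach $X$, is that the closed ball $\{\|x\|_X\leq M\}$ is compact in $B$ (by hypothesis), hence closed in $B$ and therefore closed in $Y$; thus $Y$-limits of sequences in that ball remain in it. This yields the conclusion you want without any structural assumption on $X$.
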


Finally, we will use the following global existence result for a regularized system.

\begin{proposition}
  \label{lem2.4}
Suppose that the periodic functions $v_0,T_0\in H^2(\Omega)$ are even and odd in $z$, respectively. Then for any $\varepsilon>0$, there is a unique global strong solution $(v,T)$ to the following system
\begin{eqnarray}
    &\partial_tv+(v\cdot\nabla_H)v-\left(\int_{-h}^z\nabla_H\cdot v(x,y,\xi,t)d\xi\right)\partial_zv-\Delta_Hv-\varepsilon\partial_z^2v  \nonumber\\
    &+f_0k\times v+\nabla_H\left(p_s(x,y,t)-\int_{-h}^zT(x,y,\xi,t)d\xi\right)=0,\label{eq1}\\
    &\int_{-h}^h\nabla_H\cdot v(x,y,z,t)dz=0,\label{eq2}\\
    &\partial_tT+v\cdot\nabla_HT-\left(\int_{-h}^z\nabla_H\cdot v(x,y,\xi,t)d\xi\right)
\left(\partial_zT+\frac{1}{h}\right)-\Delta_H T-\varepsilon\partial_z^2T=0, \label{eq3}
\end{eqnarray}
subject to the boundary and initial conditions (\ref{BC1})--(\ref{IC}), such that
\begin{align*}
  &(v,T)\in L^\infty_{\text{loc}}([0,\infty);H^2(\Omega))\cap C([0,\infty);H^1(\Omega))\cap
  L^2_{\text{loc}}([0,\infty);H^3(\Omega)),\\
  &(\partial_tv,\partial_tT)\in L^2_{\text{loc}}([0,\infty);H^1(\Omega)).
\end{align*}
\end{proposition}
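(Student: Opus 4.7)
My plan is to reduce this proposition to the classical global well-posedness theory for the 3D primitive equations with full viscosities and diffusion, developed by Cao--Titi \cite{CAOTITI2}. For every fixed $\varepsilon>0$, the dissipative operator $-\Delta_H-\varepsilon\partial_z^2$ is uniformly elliptic on $\Omega$, and the system (\ref{eq1})--(\ref{eq3}) falls within the class of fully dissipative primitive equations treated in \cite{CAOTITI2}, merely with anisotropic but strictly positive diffusion coefficients. Moreover, one verifies directly that the nonlinear terms respect the even/odd parity in $z$ prescribed on the initial data (using that $w=-\int_{-h}^{z}\nabla_H\cdot v\,d\xi$ is odd in $z$ whenever $v$ is even and (\ref{eq2}) holds), so one may work entirely within the symmetric subspace of $H^2_{\text{per}}(\Omega)$.

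First I would construct a local strong solution by a Galerkin scheme, using eigenfunctions of $-\Delta_H-\varepsilon\partial_z^2$ that are compatible with the prescribed periodicity and parities. The basic $L^2$ energy estimate on $(v,T)$ closes because the contribution of $\nabla_Hp_s$ against $v$ vanishes after vertical integration by (\ref{eq2}) and the Coriolis term is skew-symmetric, yielding $(v,T)\in L^\infty_tL^2\cap L^2_tH^1$. The crucial $H^1$ estimate then follows from the Cao--Titi argument: splitting $v$ into its barotropic and baroclinic components, controlling $\|v\|_{L^6}$ via a Ladyzhenskaya-type interpolation, and using the full (anisotropic) parabolic dissipation to absorb the singular nonlinearity $\bigl(\int_{-h}^{z}\nabla_H\cdot v\,d\xi\bigr)\partial_zv$. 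A further $H^2$ estimate, obtained by differentiating the equations and using the full parabolic smoothing in both horizontal and vertical directions, upgrades this to $(v,T)\in L^\infty_{\text{loc}}H^2\cap L^2_{\text{loc}}H^3$ globally in time, and then $(\partial_tv,\partial_tT)\in L^2_{\text{loc}}H^1$ can be read off directly from equations (\ref{eq1}) and (\ref{eq3}).

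The main technical obstacle is that all constants in these a priori estimates depend badly on $\varepsilon^{-1}$, since they rely essentially on the presence of vertical dissipation. This is acceptable for the present purpose of constructing the regularized solutions, but means that the estimates proved here cannot be used when sending $\varepsilon\to 0$; the uniform-in-$\varepsilon$ bounds are deferred to Sections \ref{sec3}--\ref{sec4}. Uniqueness and continuous dependence are then routine, via an energy estimate on the difference of two solutions combined with Gronwall's inequality, while the passage from the Galerkin approximations to a genuine strong solution on $[0,\infty)$ uses the Aubin--Lions compactness of Lemma \ref{AL} together with the global in time a priori bounds derived above.
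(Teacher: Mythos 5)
Your outline is correct and is essentially the argument the paper intends: the paper omits the proof entirely, citing Proposition 2.1 of \cite{CAOLITITI1}, where precisely this kind of Galerkin construction plus the Cao--Titi global $H^1$ estimate for the fully dissipative (anisotropic) system, followed by an $H^2$ bootstrap, is carried out. Your additional observations --- that the parity in $z$ is propagated, and that the resulting bounds necessarily degenerate as $\varepsilon\to0$ so the uniform estimates must come from Sections \ref{sec3}--\ref{sec4} --- are consistent with how the paper uses this proposition.
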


\begin{proof}
The proof can be given in the same way as in \cite{CAOLITITI1} (see Proposition 2.1 there), and thus we omit it here.
\end{proof}

\section{Low order energy estimates}
\label{sec3}

In this section, we work on the low order energy estimates on the strong solution to system (\ref{eq1})--(\ref{eq3}), subject to the boundary and initial conditions (\ref{BC1})--(\ref{IC}). In particular, we prove that the growth of the $L^q$ norms of $v$ is not faster than $C\sqrt q$, for a constant $C$ independent of $q$.

\begin{proposition}\label{prop3.1}
Let $(v,T)$ be the global strong solution to system (\ref{eq1})--(\ref{eq3}), subject to the boundary and initial conditions (\ref{BC1})--(\ref{IC}). Then for any $\mathcal T\in(0,\infty)$, we have the following:

(i) basic energy estimates
$$
\sup_{0\leq t\leq\mathcal T}\|(v,T)\|_2^2+\int_0^{\mathcal T}\|(\nabla_Hv, \nabla_HT, \sqrt\varepsilon\partial_zv,
\sqrt\varepsilon\partial_zT)\|_2^2dt\leq  K_0(\mathcal T),
$$
where $K_0(\mathcal T)=Ce^\mathcal {T}(\|v_0\|_2^2+\|T_0\|_2^2)$, for a positive constant $C$ depending only on $h$;

(ii) $L^q$ estimate on  $T$, with $2\leq q\leq\infty$
$$
\sup_{0\leq t\leq \mathcal T}\|T^*\|_q\leq   \|T^*_0\|_q,
$$
where $T^*=T+\frac{z}{h}$ and $T^*_0=T_0+\frac{z}{h}$;

(iii) $L^q$ estimate on $v$, with $q\in[4,\infty)$,
$$
\sup_{0\leq t\leq\mathcal T}\|v\|_q \leq K_1(\mathcal T)e^{C\|T_0\|_q^2\mathcal T}
(1+\|v_0\|_q)\sqrt q,
$$
where $K_1$ is a continuously increasing function determined by $\|v_0\|_2, \|T_0\|_2, \|v_0\|_4 $and $\|T_0\|_4$.
\end{proposition}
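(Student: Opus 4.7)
\textbf{Plan for Proposition~\ref{prop3.1}.} The three parts are handled in order, each resting on the previous.

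\emph{Part (i).} I would run a combined $L^2$ energy estimate: test \eqref{eq1} by $v$ and \eqref{eq3} by $T$ in $L^2(\Omega)$ and add. Because the combined velocity $(v,w)$ is three-dimensionally divergence free with $w|_{z=\pm h}=0$ (forced by the odd-in-$z$ extension and periodicity), all convective terms integrate to zero, and the Coriolis integral is antisymmetric. The surface pressure contribution $\int_\Omega v\cdot\nabla_H p_s$ drops out since $\int_{-h}^h v\,dz$ is horizontally divergence free by \eqref{eq2}. The two remaining couplings, the hydrostatic pressure $-\int_\Omega v\cdot\nabla_H\int_{-h}^z T\,d\xi$ and the source $h^{-1}\int_\Omega wT$, collapse after an integration by parts in $z$ to a multiple of $\int_\Omega wT$, which, using Minkowski's inequality $\|w\|_2\le 2h\|\nabla_H v\|_2$ together with Young's inequality, is absorbed into the horizontal dissipation up to a $C\|T\|_2^2$ term; Gronwall then closes the estimate.

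\emph{Part (ii).} Setting $T^\ast:=T+z/h$, a direct computation gives the pure transport-diffusion equation
\[
\partial_t T^\ast+v\cdot\nabla_H T^\ast+w\partial_z T^\ast-\Delta_H T^\ast-\varepsilon\partial_z^2 T^\ast=0,
\]
since $w\partial_z(z/h)=w/h$ exactly cancels the source while $\nabla_H(z/h)=\partial_z^2(z/h)=0$. Testing against $|T^\ast|^{q-2}T^\ast$, the transport terms vanish (divergence-freeness of $(v,w)$ plus the boundary conditions) and the two diffusion terms contribute nonnegatively, so $\|T^\ast(t)\|_q$ is non-increasing for each $q\in[2,\infty)$; the $q=\infty$ case follows by passing $q\to\infty$.

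\emph{Part (iii).} Multiplying \eqref{eq1} by $|v|^{q-2}v$ and integrating over $\Omega$, the advective and Coriolis terms vanish as in part (i), and the dissipation produces $c\,q^{-2}\bigl(\|\nabla_H|v|^{q/2}\|_2^2+\varepsilon\|\partial_z|v|^{q/2}\|_2^2\bigr)$. Two pressure pieces remain:
\[
-\int_\Omega\nabla_H p_s\cdot|v|^{q-2}v\,dxdydz\quad\text{and}\quad\int_\Omega\Bigl(\int_{-h}^z\nabla_H T\,d\xi\Bigr)\cdot|v|^{q-2}v\,dxdydz.
\]
The temperature piece I would estimate via Lemma~\ref{lad} with $\phi:=|\nabla_H T|$ and the splitting $|v|^{q-1}=\varphi\psi$ into suitable powers of $|v|$ (for instance $\varphi=|v|^{q/2}$, $\psi=|v|^{(q-2)/2}$); Young's inequality then returns the $\|\nabla_H|v|^{q/2}\|_2$-factor to the dissipation and leaves terms of the form $C\|\nabla_H T\|_2^2\|v\|_q^q$ (time-integrable by part (i)). For the surface pressure, I would exploit that $p_s=p_s(x,y,t)$ depends only on the horizontal variables: integrate in $z$ first, use the barotropic constraint $\int_{-h}^h\nabla_H\cdot v\,dz=0$ to kill the leading-order piece of $\nabla_H\cdot(|v|^{q-2}v)$, and treat the remainder through Lemma~\ref{lad} together with an $L^2(M)$ control of $\nabla_H p_s$ read off from the vertically averaged momentum equation. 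Combining, and using part (ii) to replace $\|T(t)\|_q$ by $\|T_0\|_q$ (up to a harmless constant), one obtains a differential inequality of the shape
\[
\tfrac{d}{dt}\|v\|_q^q\le C\bigl(1+\|\nabla_H T\|_2^2+q\|T_0\|_q^2\bigr)\|v\|_q^q+(\text{data-controlled remainder}),
\]
and Gronwall followed by taking the $q$-th root yields the claim. The $\sqrt{q}$-factor is produced by the 2D Gagliardo--Nirenberg inequality $\|f\|_{q,M}\le C\sqrt{q}\,\|f\|_{2,M}^{2/q}\|\nabla_H f\|_{2,M}^{1-2/q}$ applied on horizontal slices. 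The anticipated main obstacle is precisely the handling of $p_s$: a naive Calder\'on--Zygmund estimate on the 2D Poisson equation $\Delta_H p_s=(\text{source})$ would inject $q$-dependent constants that would destroy the $\sqrt{q}$-growth, so the key is to route the entire argument through Lemma~\ref{lad}-type bounds combined with 2D Gagliardo--Nirenberg so that $\sqrt{q}$ is the only $q$-factor surviving in the final constant.
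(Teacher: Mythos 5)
Your parts (i) and (ii) match the paper's proof essentially verbatim (energy identity plus Gronwall; maximum principle for $T^\ast$ and $q\to\infty$), so no issue there. In part (iii) you correctly identify the central obstacle -- that a Calder\'on--Zygmund estimate for $p_s$ in $L^q(M)$ carries $q$-dependent constants -- but your proposed resolution has a genuine gap. You want to control the surface pressure through ``an $L^2(M)$ control of $\nabla_H p_s$ read off from the vertically averaged momentum equation.'' Decomposing $p_s$ as the paper does, the dangerous piece solves $-\Delta_Hp_s^1=\mathrm{div}_H\mathrm{div}_H\bigl(\frac{1}{2h}\int_{-h}^h v\otimes v\,dz\bigr)$, so the $q$-independent $L^2(M)$ bound is $\|\nabla_Hp_s^1\|_{2,M}\leq C\,\||v||\nabla_Hv|\|_2$. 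For this to serve as a Gronwall coefficient you need $\int_0^{\mathcal T}\||v||\nabla_Hv|\|_2^2\,dt<\infty$, and that is \emph{not} available from the basic energy estimate of part (i): it is a genuinely higher-order quantity. Your single-pass argument therefore cannot close -- the coefficient in your final differential inequality, $C(1+\|\nabla_HT\|_2^2+q\|T_0\|_q^2)$, silently omits the contribution of the quadratic pressure term, and there is no way to absorb it without knowing its time integrability in advance.

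The paper resolves this with a two-step bootstrap that your plan is missing. Step 1 is a \emph{rough} $L^q$ estimate in which $p_s^1$ is bounded via the $q$-dependent elliptic estimate $\|p_s^1\|_{q,M}\leq C_q\int_{-h}^h\|v\|_{2q,M}^2\,dz$; the constants are allowed to depend on $q$ because this step is used only at $q=4$, where it yields $\sup_{t\leq\mathcal T}\|v\|_4^4+\int_0^{\mathcal T}\||v||\nabla_Hv|\|_2^2\,dt\leq K_1'(\mathcal T)$. Step 2 then reruns the $L^q$ estimate for general $q\geq4$ using the $q$-independent bound $\|\nabla_Hp_s^1\|_{2,M}\leq C\||v||\nabla_Hv|\|_2$, with the Step-1 quantity now serving as an integrable Gronwall coefficient; careful bookkeeping of the explicit $q$-factors (the paper reduces to a differential inequality for $q+1+\|v\|_q^2$ rather than $\|v\|_q^q$) is what ultimately produces the $\sqrt q$ growth, not the 2D Gagliardo--Nirenberg constant per se. Your alternative treatment of the hydrostatic (temperature) piece via Lemma \ref{lad} with $\phi=|\nabla_HT|$ is workable, though the paper's route -- integrating by parts and invoking $\|T\|_q\leq C(1+\|T_0\|_q)$ from part (ii) -- is what produces the stated factor $e^{C\|T_0\|_q^2\mathcal T}$. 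To repair your argument, insert the rough $q=4$ estimate as a preliminary step and feed $\int_0^{\mathcal T}\||v||\nabla_Hv|\|_2^2\,dt$ into the Gronwall coefficient of the refined estimate.
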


\begin{proof}
(i) Multiplying equations (\ref{eq1}) and (\ref{eq3}) by $v$ and $T$, respectively, summing the resulting equations up, and integrating over $\Omega$, then it follows from integrating by parts and using (\ref{eq2}) that
\begin{align*}
  &\frac{1}{2}\frac{d}{dt}\int_\Omega(|v|^2+|T|^2)dxdydz\\
  &+\int_\Omega\Big(|\nabla_Hv|^2+\varepsilon|\partial
  _zv|^2+|\nabla_HT|^2+\varepsilon|\partial
  _zT|^2\Big)dxdydz\\
  =&-\int_\Omega\left[\left(\int_{-h}^zTd\xi\right)\nabla_H\cdot v-\frac{1}{h}\left(\int_{-h}^z\nabla_H\cdot vd\xi\right)T\right]dxdydz\\
  \leq&C\|T\|_2\|\nabla_Hv\|_2\leq\frac{1}{2}\|\nabla_Hv\|_2^2+C\|T\|_2^2.
\end{align*}
Thus
$$
\frac{d}{dt}\|(v,T)\|_2^2+\|(\nabla_Hv,\nabla_HT,\sqrt\varepsilon\partial_zv,\sqrt\varepsilon \partial_zT)\|_2^2)
\leq C\|T\|_2^2,
$$
from which, by the Gronwall inequality, one obtains (i).

(ii) Recalling the definition of $T^*$, using equation (\ref{eq3}), one can easily check that $T^*$ satisfies
$$
\partial_tT^*+v\cdot\nabla_HT^*-\left(\int_{-h}^z\nabla_H\cdot v(x,y,\xi,t)d\xi\right)
\partial_zT^*-\Delta_HT^*-\varepsilon\partial_z^2T^*=0.
$$
Multiplying the above equation by $|T^*|^{q-2}T^*$, with $2\leq q<\infty$, integrating over $\Omega$, then it follows from integration by parts and using the divergence free condition (\ref{eq2}) that
\begin{equation*}
  \frac{1}{q}\frac{d}{dt}\|T^*\|_q^q\leq0,
\end{equation*}
which implies the conclusion for $2\leq q<\infty$. The case that $q=\infty$ follows by taking $q\rightarrow\infty$ and using the fact that $\|T^*\|_q\rightarrow\|T^*\|_\infty$ as $q\rightarrow\infty$.

(iii) Let $4\leq q<\infty$. The $L^\infty(0,\mathcal T; L^q(\Omega))$ estimate on $v$ is proved in two steps: a rough estimate and then a more refined estimate. As we shall see below, the latter is based on the former.

\textbf{Step 1: the rough $L^\infty(0,\mathcal T; L^q(\Omega))$ estimate on $v$.} Multiplying equation (\ref{eq1}) by $|v|^{q-2}v$, and integrating the resulting equation over $\Omega$, then it follows from integrating by parts that
\begin{align}
  \frac{1}{q}\frac{d}{dt}\int_\Omega|v|^qdxdydz+&\int_\Omega|v|^{q-2}\Big(|\nabla_Hv|^2+(q-2)
  \big|\nabla_H|v|\big|^2\nonumber\\
  +&\varepsilon|\partial_zv|^2+(q-2)\varepsilon
  \big|\partial_z|v|\big|^2\Big)dxdydz=I_1+I_2,\label{roughqv1}
\end{align}
where
\begin{eqnarray*}
&&I_1:=\int_\Omega\nabla_H\left(\int_{-h}^zTd\xi\right)\cdot|v|^{q-2}vdxdydz,\\ &&I_2:=-\int_\Omega\nabla_Hp_s(x,y,t)\cdot|v|^{q-2}vdxdydz.
\end{eqnarray*}

Estimate for $I_1$ is given as follows. By the H\"older and Young inequalities, and using (ii), we deduce
\begin{align}
  I_1=&\int_\Omega\nabla_H\left(\int_{-h}^zTd\xi\right)\cdot|v|^{q-2}vdxdydz\nonumber\\
  =&-\int_\Omega\left(\int_{-h}^zTd\xi\right)(|v|^{q-2}\nabla_H\cdot v+(q-2)|v|^{q-3}v\cdot\nabla_H|v|)dxdydz\nonumber\\
  \leq&\int_\Omega\left|\int_{-h}^zTd\xi\right||v|^{q-2}(|\nabla_Hv|+(q-2)\big|\nabla_H|v|\big|) dxdydz\nonumber\\
  \leq&\left\|\int_{-h}^zTd\xi\right\|_q\|v\|_q^{\frac{q}{2}-1} \left(\left\||v|^{\frac{q}{2}-1}\nabla _Hv\right\|_2+(q-2)\left\||v|^{\frac{q}{2}-1}\nabla_H|v|\right\|_2\right)\nonumber\\
  \leq&C\|T\|_q\|v\|_q^{\frac{q}{2}-1}\left(\left\||v|^{\frac{q}{2}-1}\nabla _Hv\right\|_2+(q-2)\left\||v|^{\frac{q}{2}-1}\nabla_H|v|\right\|_2\right)\nonumber\\
  \leq&\frac{1}{8}\left(\left\||v|^{\frac{q}{2}-1}\nabla _Hv\right\|_2^2+(q-2)\left\||v|^{\frac{q}{2}-1}\nabla_H|v|\right\|_2^2\right)+Cq\|T\|_q^2\|v\|_q^
  {q-2}\nonumber\\
  \leq&\frac{1}{8}\left(\left\||v|^{\frac{q}{2}-1}\nabla _Hv\right\|_2^2+(q-2)\left\||v|^{\frac{q}{2}-1}\nabla_H|v|\right\|_2^2\right)+Cq(1+\|T_0\|_q^2)\|v\|_q^
  {q-2},\label{I1}
\end{align}
where the constant $C$ is independent of $q\in[4,\infty)$.

Applying the operator $\frac{1}{2h}\int_{-h}^h\text{div}_H(\cdot)dz$ to equation (\ref{eq1}), and using (\ref{eq2}), it follows from integrating by parts that
\begin{align*}
  -\Delta_Hp_s(x,y,t)=&\text{div}_H\text{div}_H\left(\frac{1}{2h}\int_{-h}^hv\otimes vdz\right)+\text{div}_H\left(\frac{1}{2h}\int_{-h}^hf_0k\times vdz\right)\\
  &-\Delta_H\left(\frac{1}{2h}\int_{-h}^h\left(\int_{-h}^zTd\xi-\int_M\int_{-h}^zTd\xi dxdy\right)dz\right).
\end{align*}
Note that $p_s(x,y,t)$ can be chosen in a unique way by assuming that $\int_Mp_s(x,y,t)dxdy=0$. Set
$$
p_s^0=\frac{1}{2h}\int_{-h}^h\left(\int_{-h}^zTd\xi-\int_M\int_{-h}^zTd\xi dxdy\right)dz,
$$
and decompose $p_s$ as $p_s=p_s^0+p_s^1+p_s^2$, with
\begin{equation*}
  \left\{
  \begin{array}{lr}
    -\Delta_Hp_s^1=\text{div}_H\text{div}_H\left(\frac{1}{2h}\int_{-h}^h(v\otimes v)dz\right),&\mbox{ in }M,\\
    \int_Mp_s^1dxdy=0,&p_s^1\mbox{ is periodic,}
  \end{array}
  \right.
\end{equation*}
and
\begin{equation*}
  \left\{
  \begin{array}{lr}
    -\Delta_Hp_s^2=\text{div}_H\left(\frac{1}{2h}\int_{-h}^hf_0k\times vdz\right),&\mbox{ in }M,\\
    \int_Mp_s^2dxdy=0,&p_s^2\mbox{ is periodic.}
  \end{array}
  \right.
\end{equation*}
Then by the elliptic estimates, we have
\begin{eqnarray}
  &\|p_s^1\|_{q,M}\leq C_q\left\|\int_{-h}^h(v\otimes v)dz\right\|_{q,M}\leq C_q\int_{-h}^h\|v\|_{2q,M}^2dz,\quad\mbox{for all }q\in(1,\infty),\label{p11}\\
  &\|\nabla_Hp_s^1\|_{2,M}\leq C\left\|\text{div}_H\left(\int_{-h}^hv\otimes vdz\right)\right\|_{2,M}\leq C\||v||\nabla_Hv|\|_2,\label{p12}\\
  &\|\nabla_Hp_s^2\|_{2,M}\leq C\left\|\int_{-h}^hk\times vdz\right\|_{2,M}
  \leq C\|v\|_2. \label{p2}
\end{eqnarray}

Setting
$$
I_{2i}:=-\int_\Omega\nabla_Hp_s^i(x,y,t)\cdot|v|^{q-2}vdxdydz,\quad i=0,1,2,
$$
then it is obvious that $I_2=I_{20}+I_{21}+I_{22}$.
We estimate $I_{2i}, i=0,1,2$, as follows. For $I_{20}$, a similar argument as (\ref{I1}) yields
\begin{align}\label{I20}
  I_{20}\leq \frac{1}{8}\left(\left\||v|^{\frac{q}{2}-1}\nabla _Hv\right\|_2^2+(q-2)\left\||v|^{\frac{q}{2}-1}\nabla_H|v|\right\|_2^2\right)+Cq(1+\|T_0\|_q^2)\|v\|_q^
  {q-2},
\end{align}
with constant $C$ independent of $q$. For $I_{22}$, by the H\"older, Minkowski, Ladyzhenskaya and Young inequalities, we deduce
\begin{align}
I_{22}=&-\int_\Omega\nabla_Hp_s^2(x,y,t)\cdot|v|^{q-2}vdxdydz\nonumber\\
  \leq&\int_M|\nabla_Hp_s^2(x,y,t)|\left(\int_{-h}^h|v|^{q-1}dz\right)dxdy\nonumber\\
  \leq&\left(\int_M|\nabla_Hp_s^2(x,y,t)|^2dxdy\right)^{\frac{1}{2}}\left[
  \int_M\left(\int_{-h}^h|v|^{q-1}dz\right)^2dxdy\right]^{\frac{1}{2}}\nonumber\\
  \leq&C\|\nabla_Hp_s^2\|_{2,M}\int_{-h}^h\|v\|_{2(q-1),M}^{q-1}dz
  \leq C\|\nabla_Hp_s^2\|_{2,M}\int_{-h}^h\|v\|_{2,M}^{\frac{1}{q-1}}\|v\|_{2q,M}^{\frac{q(q-2)}{q-1}}dz\nonumber\\
  =&C\|\nabla_Hp_s^2\|_{2,M}\int_{-h}^h\|v\|_{2,M}^{\frac{1}{q-1}}\left\||v|^{\frac{q}{2}}
  \right\|_{4,M}^{\frac{2(q-2)}{q-1}}dz\nonumber\\
  \leq&C\|\nabla_Hp_s^2\|_{2,M}\int_{-h}^h\|v\|_{2,M}^{\frac{1}{q-1}}\left\||v|^{\frac{q}{2}}
  \right\|_{2,M}^{\frac{q-2}{q-1}}\left\||v|^{\frac{q}{2}}\right\|_{H^1(M)}^{\frac{q-2}{q-1}}dz\nonumber\\
  \leq&C\|\nabla_Hp_s^2\|_{2,M}\left(\int_{-h}^h\|v\|_{2,M}^2dz\right)^{\frac{1}{2(q-1)}}\left(\int_{-h}^h
  \left\||v|^{\frac{q}{2}}\right\|_{2,M}^2dz\right)^{\frac{q-2}{2(q-1)}}\nonumber\\
  &\times\left[\int_{-h}^h\left(\left\||v|^{\frac{q}{2}}\right\|_{2,M}^2+
  \left\|\nabla_H|v|^{\frac{q}{2}}\right\|_{2,M}^2\right)dz\right]^{\frac{q-2}{2(q-1)}}\left(\int_{-h}^h
  1dz\right)^{\frac{1}{2(q-1)}}\nonumber\\
  \leq&C(2h)^{\frac{1}{2(q-1)}}\|\nabla_Hp_s^2\|_{2,M}\|v\|_2^{\frac{1}{q-1}}\left\||v|^{\frac{q}{2}}
  \right\|_2^{\frac{q-2}{q-1}}\left(\left\||v|^{\frac{q}{2}}\right\|_{2}^{\frac{q-2}{q-1}}+
  \left\|\nabla_H|v|^{\frac{q}{2}}\right\|_{2}^{\frac{q-2}{q-1}}\right)\nonumber\\
  \leq&C\|\nabla_Hp_s^2\|_{2,M}\|v\|_2^{\frac{1}{q-1}}\left[\|v\|_q^{\frac{q(q-2)}{q-1}}
  +\left\||v|^{\frac{q}{2}}\right\|_2^{\frac{q-2}{q-1}}\left(\frac{q}{2}\left\|
  |v|^{\frac{q}{2}-1}\nabla_H|v|\right\|_2\right)^{\frac{q-2}{q-1}}\right]\nonumber\\
  \leq&C\|\nabla_Hp_s^2\|_{2,M}\|v\|_2^{\frac{1}{q-1}}\|v\|_q^{\frac{q(q-2)}{q-1}}
  \nonumber\\
  &+C\|\nabla_Hp_s^2\|_{2,M}\|v\|_2^{\frac{1}{q-1}}\left\||v|^{\frac{q}{2}}\right\|_2^{\frac{q-2}{q-1}}\left(\sqrt{\frac{q-2}{8}}\left\|
  |v|^{\frac{q}{2}-1}\nabla_H|v|\right\|_2\right)^{\frac{q-2}{q-1}}q^{\frac{q-2}{2(q-1)}}
  \nonumber\\
  \leq&\frac{q-2}{8}\left\|
  |v|^{\frac{q}{2}-1}\nabla_H|v|\right\|_2^2+C\|\nabla_Hp_s^2\|_{2,M}^{\frac{2(q-1)}{q}}\|v\|_2^{\frac{2}{q}}
  \left\||v|^{\frac{q}{2}}\right\|_2^{\frac{2(q-2)}{q}}q^{\frac{q-2}{q}}\nonumber\\
  &+C\|\nabla_Hp_s^2\|_{2,M}\|v\|_2^{\frac{1}{q-1}}\|v\|_q^{\frac{q(q-2)}{q-1}}\nonumber\\
  \leq&\frac{q-2}{8}\left\|
  |v|^{\frac{q}{2}-1}\nabla_H|v|\right\|_2^2+Cq(1+\|v\|_2^2)(1+\|\nabla_Hp_s^2\|_{2,M}^2)\|v\|_q^{q-2}\nonumber\\
  &+C(1+\|v\|_2^2)(1+\|\nabla_Hp_s^2\|_{2,M}^2)\|v\|_q^{\frac{q(q-2)}{q-1}},\nonumber\\
  \leq&C(1+\|v\|_2^2)(1+\|\nabla_Hp_s^2\|_{2,M}^2)\left(q\|v\|_q^{q-2}+
  \|v\|_q^{\frac{q(q-2)}{q-1}}\right)\nonumber\\
  &+\frac{q-2}{8}\left\|
  |v|^{\frac{q}{2}-1}\nabla_H|v|\right\|_2^2,\label{I220}
\end{align}
with the constant $C$ independent of $q\geq 4$.
Recalling the elliptic estimate (\ref{p2}), the above inequality gives
\begin{align}
  I_{22}
  \leq&\frac{q-2}{8}\left\||v|^{\frac{q}{2}-1}\nabla_H|v|\right\|_2^2+C\left(1+\|v\|_2^2
  \right)^2\left(q\|v\|_q^{q-2}
  +\|v\|_q^{\frac{q(q-2)}{q-1}}\right).\label{I22}
\end{align}
As for $I_{21}$, recalling the elliptic estimate (\ref{p11}), and using the H\"older, Lemma \ref{lem2.4}, Ladyzhenskaya and Young inequalities, we deduce
\begin{align*}
  I_{21}=&-\int_\Omega
  p_s^1(x,y,t)\nabla_H\cdot(|v|^{q-2}v)dxdydz\\
  \leq&C_q\int_M|p_s^1(x,y,t)|\left(\int_{-h}^h|\nabla_Hv||v|^{q-2}dz\right)dxdy\\
  =&C_q\int_M|p_s^1(x,y,t)|\left(\int_{-h}^h|\nabla_Hv||v|^{\frac{q}{2}-1}|v|^{\frac{q}{2}-1}dz\right)dxdy\\
  \leq&C_q\int_M|p_s^1(x,y,t)|\left(\int_{-h}^h|\nabla_Hv|^2|v|^{q-2}dz\right)^{\frac{1}{2}}
  \left(\int_{-h}^h|v|^{q-2}dz\right)^{\frac{1}{2}}dxdy\\
  \leq&C_q\left(\int_M|p_s^1(x,y,t)|^{\frac{4q}{q+2}}dxdy\right)^{\frac{q+2}{4q}}\left(\int_M
  \int_{-h}^h|\nabla_Hv|^2|v|^{q-2}dzdxdy\right)^{\frac{1}{2}}\\
  &\times\bigg[\int_M\left(\int_{-h}^h
  |v|^{q-2}dz\right)^{\frac{2q}{q-2}}dxdy\bigg]^{\frac{q-2}{4q}}\\
  \leq&C_q\|p_s^1\|_{\frac{4q}{q+2},M}\left\||v|^{\frac{q}{2}-1}\nabla_Hv\right\|_2\bigg[\int_{-h}^h\left(\int_M
  |v|^{2q}dxdy\right)^{\frac{q-2}{2q}} dz\bigg]^{\frac{1}{2}}\\
  =&C_q\|p_s^1\|_{\frac{4q}{q+2},M}\left\||v|^{\frac{q}{2}-1}\nabla_Hv\right\|_2\left(\int_{-h}^h\|v\|_{2q,M}
  ^{q-2}dz\right)^{\frac{1}{2}}\\
  \leq&C_q\left(\int_{-h}^h\|v\|_{\frac{8q}{q+2},M}^2dz\right)\left\||v|^{\frac{q}{2}-1}\nabla_Hv\right\|_2
  \left(\int_{-h}^h\left\||v|^{\frac{q}{2}}\right\|_{4,M}^{\frac{2(q-2)}{q}}dz\right)^{\frac{1}{2}}\\
  \leq&C_q\left(\int_{-h}^h\|v\|_{4,M}\|v\|_{2q,M}dz\right)\left\||v|^{\frac{q}{2}-1}\nabla_Hv\right\|_2
  \left(\int_{-h}^h\left\||v|^{\frac{q}{2}}\right\|_{4,M}^{\frac{2(q-2)}{q}}dz\right)^{\frac{1}{2}}\\
  =&C_q\left(\int_{-h}^h\|v\|_{4,M}\left\||v|^{\frac{q}{2}}\right\|_{4,M}^{\frac{2}{q}}
  dz\right)\left\||v|^{\frac{q}{2}-1}\nabla_Hv\right\|_2
  \left(\int_{-h}^h\left\||v|^{\frac{q}{2}}\right\|_{4,M}^{\frac{2(q-2)}{q}}dz\right)^{\frac{1}{2}}\\
  \leq&C_q\left(\int_{-h}^h\|v\|_{2,M}^{\frac{1}{2}}\|v\|_{H^1(M)}^{\frac{1}{2}}\left\||v|
  ^{\frac{q}{2}}
  \right\|_{2,M}^{\frac{1}{q}}\left\||v|^{\frac{q}{2}}\right\|_{H^1(M)}^{\frac{1}{q}}dz\right)\\
  &\times\left\||v|^{\frac{q}{2}-1}\nabla_Hv\right\|_2
  \left(\int_{-h}^h\left\||v|^{\frac{q}{2}}\right\|_{2,M}^{\frac{q-2}{q}}
  \left\||v|^{\frac{q}{2}}\right\|_{H^1(M)}^{\frac{q-2}{q}}dz\right)^{\frac{1}{2}}\\
  \leq&C_q\|v\|_{2}^{\frac{1}{2}}\left(\|v\|_{2}^{\frac{1}{2}}+
  \|\nabla_Hv\|_{2}^{\frac{1}{2}}\right)\left\||v|^{\frac{q}{2}-1}\nabla_Hv\right\|_2\\
  &\times\left\||v|^{\frac{q}{2}}
  \right\|_{2}^{\frac{1}{2}}\left(\left\||v|^{\frac{q}{2}}
  \right\|_{2}^{\frac{1}{2}}+\left\|\nabla_H|v|^{\frac{q}{2}}
  \right\|_{2}^{\frac{1}{2}}\right)\\
  \leq&\frac{1}{8}\left\||v|^{\frac{q}{2}-1}\nabla_Hv\right\|_{2}^{2}+C_q[\|v\|_2^2(\|v\|_2^2+
  \|\nabla_Hv\|_2^2)+1]\|v\|_q^q\\
  \leq&\frac{1}{8}\left\||v|^{\frac{q}{2}-1}\nabla_Hv\right\|_{2}^{2}+C_q(1+\|v\|_2^2)(1+\|v\|_2^2+
  \|\nabla_Hv\|_2^2)\|v\|_q^q.
\end{align*}

By the aid of the above estimate, as well as (\ref{I1}), (\ref{I20}) and (\ref{I22}), it follows from the Young inequality that
\begin{align*}
  &I_1+I_2=I_1+I_{20}+I_{21}+I_{22}\\
  \leq&\frac{3}{8}\left((q-2)\left\||v|^{\frac{q}{2}-1}\nabla_H|v|\right\|_2^2
  +\left\||v|^{\frac{q}{2}-1}\nabla_Hv\right\|_{2}^{2}\right)\\
  &+Cq(1+\|T_0\|_q^2)\|v\|_q^{q-2}+Cq(1+\|v\|_2^2)^2
  (1+\|v\|_q^q)\\
  &+C_q(1+\|v\|_2^2)(1+\|v\|_2^2+\|\nabla_Hv\|_2^2)\|v\|_q^q\\
  \leq&\frac{3}{8}\left((q-2)\left\||v|^{\frac{q}{2}-1}\nabla_H|v|\right\|_2^2+\left\|
  |v|^{\frac{q}{2}-1}\nabla_Hv\right\|_{2}^{2}\right)\\
  &+C_q(1+\|v\|_2^2)(1+\|v\|_2^2+\|\nabla_Hv\|_2^2+\|T_0\|_q^2)(1+\|v\|_q^q).
\end{align*}
Substituting this into (\ref{roughqv1}), one obtains
\begin{align*}
  &\frac{d}{dt}\|v\|_q^q+\frac{5q}{8}\left(\left\|
  |v|^{\frac{q}{2}-1}\nabla_Hv\right\|_{2}^{2}+\varepsilon
  \left\|
  |v|^{\frac{q}{2}-1}\partial_zv\right\|_{2}^{2}\right)\\
  \leq& C_q(1+\|v\|_2^2)(1+\|v\|_2^2+\|\nabla_Hv\|_2^2+\|T_0\|_q^2)(1+\|v\|_q^q),
\end{align*}
from which, using (i), one obtains
\begin{align*}
  &\sup_{0\leq t\leq \mathcal T}\|v\|_q^q+\int_0^\mathcal{T}\left(\left\|
  |v|^{\frac{q}{2}-1}\nabla_Hv\right\|_{2}^{2}+\varepsilon
  \left\|
  |v|^{\frac{q}{2}-1}\partial_zv\right\|_{2}^{2}\right)dz\\
  \leq&e^{C_q\int_0^\mathcal T(1+\|v\|_2^2)(1+\|v\|_2^2+\|\nabla_Hv\|_2^2+\|T_0\|_q^2)ds}(1+\|v_0\|_q^q)\\
  \leq& e^{C_qe^{2\mathcal T}(\mathcal T+1)(1+\|v_0\|_2^2+\|T_0\|_2^2+\|T_0\|_q^2)^2}(1+\|v_0\|_q^q),
\end{align*}
in particular
\begin{equation}
  \sup_{0\leq t\leq \mathcal T}\|v\|_4^4+\int_0^\mathcal{T}\left\|
  |v|\nabla_Hv\right\|_{2}^{2}dz\leq K_1'(\mathcal T), \label{roughqv}
\end{equation}
where
$$
K_1'(\mathcal T)=e^{Ce^{2\mathcal T}(\mathcal T+1)(1+\|v_0\|_2^2+\|T_0\|_2^2+\|T_0\|_4^2)^2}(1+\|v_0\|_4^4),
$$
for a positive constant $C$ depending only on $h$.

\textbf{Step 2: the refined $L^\infty(0,\mathcal T; L^q(\Omega))$ estimate on $v$.} Noticing that all the constants $C$ in the estimates for $I_1, I_{20}$ and $I_{22}$ are independent of $q\in[4,\infty)$, it suffices to give a refined estimate for $I_{21}$.
Recalling the elliptic estimate (\ref{p12}), the similar argument as (\ref{I220}) yields
\begin{align*}
  I_{21}\leq&\int_M|\nabla_Hp_s^1(x,y,t)|\left(\int_{-h}^h|v|^{q-1}dz\right)dxdy\\
  \leq&\frac{q-2}{8}\left\||v|^{\frac{q}{2}-1}\nabla_H|v|\right\|_2^2+C(1+\|v\|_2^2)(1+\|\nabla_Hp_s^1
  \|_{2,M}^2)\left(q\|v\|_q^{q-2}+\|v\|_q^{\frac{q(q-2)}{q-1}}\right)\\
  \leq&\frac{q-2}{8}\left\||v|^{\frac{q}{2}-1}\nabla_H|v|\right\|_2^2+C(1+\|v\|_2^2)(1+\||v||\nabla_Hv|
  \|_{2}^2)\left(q\|v\|_q^{q-2}+\|v\|_q^{\frac{q(q-2)}{q-1}}\right),
\end{align*}
where the constant $C$ is independent of $q\in[4,\infty)$. Combining this with (\ref{I1}), (\ref{I20}) and (\ref{I22}), one obtains
\begin{align*}
  &I_1+I_2=I_1+I_{20}+I_{21}+I_{22}\\
  \leq&\frac{q-2}{2}\left\||v|^{\frac{q}{2}-1}\nabla_H|v|\right\|_2^2
  +\frac{1}{4}\left\||v|^{\frac{q}{2}-1}\nabla_H v\right\|_2^2\\
  &+Cq[1+\|T_0\|_q^2+(1+\|v\|_2^2)(1+\|v\|_2^2+\||v||\nabla_Hv|\|_{2}^2)]\|v\|_q^{q-2}\\
  &+C(1+\|v\|_2^2)(1
  +\|v\|_2^2+\||v||\nabla_Hv|\|_{2}^2)\|v\|_q^{\frac{q(q-2)}{q-1}}
  \\
  \leq&C[\|T_0\|_q^2+(1+\|v\|_2^2)(1+\|v\|_2^2+\||v||\nabla_Hv|\|_{2}^2)]\left(q\|v\|_q^{q-2}
  +\|v\|_q^{\frac{q(q-2)}{q-1}}\right)\\
  &+\frac{q-2}{2}\left\||v|^{\frac{q}{2}-1}\nabla_H|v|\right\|_2^2
  +\frac{1}{4}\left\||v|^{\frac{q}{2}-1}\nabla_H v\right\|_2^2,
\end{align*}
which, substituted into (\ref{roughqv1}), and using the Young inequality, gives
\begin{align*}
  \frac{d}{dt}(q+1+\|v\|_q^2)\leq&C[\|T_0\|_q^2+(1+\|v\|_2^2)(1+\|v\|_2^2+\||v||\nabla_Hv|\|_{2}^2)]
  \left(q+\|v\|_q^{\frac{q-2}{q-1}}\right)\\
  \leq&C[\|T_0\|_q^2+(1+\|v\|_2^2)(1+\|v\|_2^2+\||v||\nabla_Hv|\|_{2}^2)]
  \left(q+1+\|v\|_q^2\right).
\end{align*}
Recalling the estimate in (i) and (\ref{roughqv}), and applying the Gronwall inequality, it follows from the above inequality that
\begin{align*}
  (q+1)+\sup_{0\leq t\leq\mathcal T}\|v\|_q^2\leq& e^{C\int_0^\mathcal{T}[\|T_0\|_q^2+(1+\|v\|_2^2)(1+\|v\|_2^2+\||v||\nabla_Hv|\|_{2}^2)]dt}
  (q+1+\|v_0\|_q^2)\\
  \leq&2e^{C\int_0^\mathcal{T}[\|T_0\|_q^2+(1+\|v\|_2^2)(1+\|v\|_2^2+\||v||\nabla_Hv|\|_{2}^2)]dt}
  (1+\|v_0\|_q^2)q\\
  \leq&2e^{C[\|T_0\|_q^2\mathcal T+(1+K_0(\mathcal T))^2\mathcal T+(1+K_0(\mathcal T))K_1'(\mathcal T)]}(1+\|v_0\|_q)^2 q\\
  =:&K_1''(\mathcal T)e^{C\|T_0\|_q^2\mathcal T}(1+\|v_0\|_q)^2 q,
\end{align*}
where
$$
K_1''(\mathcal T)=2e^{C[(1+K_0(\mathcal T))^2\mathcal T+(1+K_0(\mathcal T))K_1'(\mathcal T)]},
$$
for a constant $C$ depends only on $h$, where $K_0(\mathcal T)$ and $K_1'(\mathcal T)$ are given as before.
Therefore, one obtains
$$
\sup_{0\leq t\leq\mathcal T}\|v\|_q \leq K_1(\mathcal T)e^{C\|T_0\|_q^2\mathcal T}
(1+\|v_0\|_q)\sqrt q,
$$
where $K_1(\mathcal T)$ is given by
\begin{equation}
  K_1(\mathcal T)=\sqrt{K_1''(\mathcal T)}, \label{K1}
\end{equation}
with $K_1''(\mathcal T)$ being given as before, proving (iii). This completes the proof.
\end{proof}

\section{High order energy inequalities}
\label{sec4}

In this section, we establish the energy inequalities for the derivatives up to second order of the strong solutions to system (\ref{eq1})--(\ref{eq3}), subject to the boundary and initial conditions (\ref{BC1})--(\ref{IC}). As we stated in the introduction and will also see below, the treatment of the different derivatives varies: we always work on the vertical derivatives first and then on the horizontal ones.

We first deal with energy inequalities for the first order derivatives which are described by the following proposition.

\begin{proposition}
  \label{prop4.1}
Let $(v,T)$ be the global strong solution of system (\ref{eq1})--(\ref{eq3}), subject to the boundary and initial conditions (\ref{BC1})--(\ref{IC}). Then for any $\mathcal T\in(0,\infty)$, we have the following:

(i) $L^\infty(0,\mathcal T; L^q(\Omega))$, $q\in[2,\infty)$, estimate of $u:=\partial_zv$:
\begin{equation*}
  \frac{d}{dt}\|u\|_q^q+\int_\Omega|u|^{q-2}\Big(|\nabla_Hu|^2+\varepsilon|\partial_zu|^2\Big)
  dxdydz\leq C_q(\|v\|_\infty^2+1)\left(\|u\|_q^q+1\right);
\end{equation*}

(ii) $L^\infty(0,\mathcal T; L^2(\Omega))$ estimate on $\partial_zT$:
\begin{align*}
  &\frac{d}{dt}\|\partial_zT\|_2^2+\|(\nabla_H\partial_zT,\sqrt\varepsilon\partial_z^2T)\|_2^2\\
  \leq& C\left(1+\|v\|_\infty^2\right)\|\partial_zT\|_2^2+C\|(\nabla_H v,u,\nabla_Hu)\|_2^2;
\end{align*}

(iii) $L^\infty(0,\mathcal T; L^2(\Omega))$ estimate on $\nabla_Hv$:
\begin{align*}
  &\frac{d}{dt}\|\nabla_Hv\|_2^2+\|(\Delta_Hv,\sqrt\varepsilon\nabla_H\partial_zv)\|_2^2\\
  \leq& C\|v\|_\infty^2\|\nabla_Hv\|_2^2+C\left(\|\nabla_HT\|_2^2+\|u\|_4^8\right);
\end{align*}

(iv) $L^\infty(0,\mathcal T; L^2(\Omega))$ estimate on $\nabla_HT$:
\begin{align*}
  &\frac{d}{dt}\|\nabla_HT\|_2^2+\|(\Delta_HT,\sqrt\varepsilon\nabla_H\partial_zT)\|_2^2\\
  \leq& C\left(1+\|(\partial_zT,\nabla_Hv)\|_2^2\right)^2
  \left(1+\|(\nabla_H\partial_zT,\Delta_Hv)\|_2^2\right);
\end{align*}
where $C$ is a positive constant depending only one $h, \mathcal T$ and the initial data (the constant $C_q$ in (i) depends also on $q$).
\end{proposition}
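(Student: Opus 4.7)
The strategy is to establish the four bounds in the stated order, dealing with vertical derivatives before horizontal ones since the latter's estimates will need to absorb the former via the anisotropic Ladyzhenskaya-type inequalities of Lemma \ref{lad}. Throughout, the only structural tool beyond standard energy methods is the cancellation between the horizontal advection $(v\cdot\nabla_H)(\cdot)$ and the vertical advection $-\bigl(\int_{-h}^z\nabla_H\cdot v\,d\xi\bigr)\partial_z(\cdot)$, both of which, after integration by parts, contribute $\pm\frac{1}{q}\int(\nabla_H\cdot v)|\cdot|^q$ and thus annihilate each other.

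For part (i), the plan is to differentiate equation (\ref{eq1}) in $z$ to obtain an evolution equation for $u=\partial_z v$ of the form
\begin{align*}
\partial_t u - \Delta_H u - \varepsilon\partial_z^2 u &+ (v\cdot\nabla_H)u + (u\cdot\nabla_H)v - (\nabla_H\cdot v)u \\
&- \Bigl(\int_{-h}^z\nabla_H\cdot v\,d\xi\Bigr)\partial_z u + f_0 k\times u - \nabla_H T = 0,
\end{align*}
noting that the surface pressure $\nabla_H p_s$ disappears since $p_s$ is $z$-independent. Testing with $|u|^{q-2}u$ and integrating by parts produces the dissipation on the left. After the advection cancellation described above, what remains to be controlled are: the stretching term $(u\cdot\nabla_H)v$ together with the uncancelled $-(\nabla_H\cdot v)u$, both handled by shifting one derivative onto $v$ and applying Young's inequality to produce a $C_q\|v\|_\infty^2\|u\|_q^q$ term plus a controlled fraction of $\bigl\||u|^{q/2-1}\nabla_H u\bigr\|_2^2$; and the buoyancy term $-\nabla_H T$, integrated by parts and absorbed using the uniform $L^\infty$ bound for $T$ from Proposition \ref{prop3.1}(ii), which contributes only to the constant $C_q$.

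For (ii), I would differentiate (\ref{eq3}) in $z$ and test with $\partial_z T$. The same advection cancellation occurs, and the surviving nonlinearities are $\int(u\cdot\nabla_H T)\partial_z T$, handled after one horizontal integration by parts together with H\"older and Poincar\'e to yield the $C\|(\nabla_H v,u,\nabla_H u)\|_2^2$ term, and $\int(\nabla_H\cdot v)(\partial_z T+1/h)\partial_z T$, which after integration by parts in the horizontal variables gives $C\|v\|_\infty^2\|\partial_z T\|_2^2$ plus a small fraction of $\|\nabla_H\partial_z T\|_2^2$.

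For (iii) and (iv), I would test (\ref{eq1}) with $-\Delta_H v$ and (\ref{eq3}) with $-\Delta_H T$, respectively. The most delicate contributions are the vertical advection pieces $\int\bigl(\int_{-h}^z\nabla_H\cdot v\,d\xi\bigr)\partial_z v\cdot\Delta_H v$ and its analogue with $\partial_z T$ and $\Delta_H T$; here Lemma \ref{lad} is essential, since it bounds these triple integrals by quantities involving only horizontal $H^1$ norms of $\nabla_H v$ and $u$ together with $\|\Delta_H v\|_2$ (respectively $\|\Delta_H T\|_2$), thereby avoiding any vertical regularity on $v$. This is exactly where the appearance of $\|u\|_4^8$ in (iii) and the quadratic factor $(1+\|(\partial_z T,\nabla_H v)\|_2^2)^2$ in (iv) come from, via Young's inequality used to decouple the Ladyzhenskaya product. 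The pressure term in (iii) is split through the decomposition $p_s=p_s^0+p_s^1+p_s^2$ introduced in Proposition \ref{prop3.1}, with the elliptic bounds (\ref{p11})--(\ref{p2}) and the barotropic identity $\nabla_H\cdot\int v\,dz=0$ providing the required control. The main obstacle throughout is the unavoidable $\|v\|_\infty^2$ coefficient forced by the absence of vertical viscosity; we accept it here because Lemma \ref{gronwall} combined with the logarithmic Sobolev inequality of Lemma \ref{log} will close the estimates in the next section, and the $\sqrt{q}$ growth of $\|v\|_q$ from Proposition \ref{prop3.1}(iii) is precisely what feeds that logarithm.
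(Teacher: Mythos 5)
Your overall architecture --- vertical derivatives before horizontal ones, the cancellation of the horizontal and vertical advection terms after testing, the uniform bound $\sup_t\|T\|_\infty\leq C$ from Proposition \ref{prop3.1} (ii) to absorb the temperature contributions, and an anisotropic estimate plus Young's inequality for the vertical-advection triple integrals --- is exactly the paper's, and parts (i), (ii) and (iv) go through as you describe. (Two small imprecisions: in (i) the derivative must be moved \emph{off} $v$ onto $|u|^{q-2}u$, not onto $v$, though your stated outcome makes clear this is what you intend; in (iv) one must first integrate by parts once more horizontally so that $\Delta_HT$ becomes $\nabla_HT$, which is what allows the second inequality of Lemma \ref{lad} with $\Psi=T$ and $\|T\|_\infty\leq C$ to be applied and is where the quadratic prefactor then arises via Young.)

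The one step that would fail as written is your treatment of the vertical advection term in (iii). The term to control is $\int_M\big(\int_{-h}^h|\nabla_Hv|\,dz\big)\big(\int_{-h}^h|u||\Delta_Hv|\,dz\big)dxdy$, and no arrangement of Lemma \ref{lad} yields the stated bound: the factor playing the role of $\psi$ always acquires $\|\nabla_H\psi\|_2^{1/2}$, so taking $\psi=\Delta_Hv$ produces the third-order quantity $\|\nabla_H\Delta_Hv\|_2$, while taking $\psi=u$ produces $\|\nabla_Hu\|_2$ --- and neither is admissible on the right-hand side of (iii), which must close using only $\|\nabla_HT\|_2^2$, $\|u\|_4^8$ and $\|v\|_\infty^2\|\nabla_Hv\|_2^2$, precisely because the energy estimate for $\nabla_Hu$ comes only later, in Proposition \ref{prop4.2}. (Your own phrase ``horizontal $H^1$ norms of $\nabla_Hv$ and $u$'' already concedes a $\|\nabla_Hu\|_2$ on the right, contradicting the claim.) The paper instead uses the separate computation (\ref{4.2-1}): H\"older in $z$, then in $(x,y)$ with exponents $(4,4,2)$, Minkowski to convert $\big[\int_M\big(\int_{-h}^h|u|^2dz\big)^2dxdy\big]^{1/4}$ into $\|u\|_4$, and the two-dimensional Gagliardo--Nirenberg inequality $\|\nabla_Hv\|_{L^4(M)}\leq C\|v\|_{L^2(M)}^{1/4}\|\nabla_H^2v\|_{L^2(M)}^{3/4}$, arriving at $C\|v\|_2^{1/4}\|u\|_4\|\Delta_Hv\|_2^{7/4}$; Young with exponents $(8/7,8)$ then gives $\tfrac14\|\Delta_Hv\|_2^2+C\|v\|_2^2\|u\|_4^8$, which is where the eighth power actually comes from. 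The point of that estimate is to spend the interpolation entirely on $v$ (only $\|v\|_2^{1/4}$ appears) so that $u$ enters at the $L^4(\Omega)$ level with no horizontal derivative on it; you need this replacement for Lemma \ref{lad} in (iii) (and again for (\ref{esthhvt3}) in Proposition \ref{prop4.2} (v)).
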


\begin{proof}
(i) Differentiating equation (\ref{eq1}) with respect to $z$, one can easily check that $u:=\partial_zv$ satisfies
\begin{equation}\label{u}
  \begin{split}
    &\partial_tu+(v\cdot\nabla_H)u-\left(\int_{-h}^z\nabla_H\cdot v(x,y,\xi,t)d\xi\right)\partial_zu-\Delta_Hu\\
    & -\varepsilon\partial_z^2u+(u\cdot\nabla_H)v-(\nabla_H\cdot v)u+f_0k\times u-\nabla_HT=0.
  \end{split}
\end{equation}
Multiplying the above equation by $|u|^{q-2}u$, and integrating over $\Omega$, it follows from integration by parts that
\begin{align*}
  &\frac{1}{q}\frac{d}{dt}\int_\Omega|u|^qdxdydz+\int_\Omega|u|^{q-2}\Big(|\nabla_Hu|^2
  +\varepsilon|\partial_zu|^2\\
  &+(q-2)|\nabla_H|u||^2
  +(q-2)\varepsilon|\partial_z|u||^2\Big)dxdydz\\
  =&\int_\Omega((\nabla_H\cdot v)u-(u\cdot\nabla_H)v+\nabla_HT)\cdot|u|^{q-2}udxdydz\\
  \leq&C_q\int_\Omega(|v||u|^{q-1}|\nabla_Hu|+|T||u|^{q-2}|\nabla_Hu|)dxdydz\\
  \leq&\frac{1}{2}\int_\Omega|u|^{q-2}|\nabla u|^2dxdydz+C_q\int_\Omega(|v|^2|u|^q+|T|^2|u|^{q-2})dxdydz.
\end{align*}
Recalling that $\sup_{0\leq t\leq\mathcal T}\|T\|_\infty\leq C$, guaranteed by Proposition \ref{prop3.1} (ii), we have
\begin{align*}
  &\frac{d}{dt}\|u\|_q^q+\int_\Omega|u|^{q-2}\Big(|\nabla_Hu|^2+\varepsilon|\partial_zu|^2\Big)dxdydz\\
  \leq& C_q(\|v\|_\infty^2+\|T\|_\infty^2)(\|u\|_q^q+1)\leq C_q(\|v\|_\infty^2+1)(\|u\|_q^q+1),
\end{align*}
proving (i).

(ii) Multiplying equation (\ref{eq3}) by $-\partial_z^2T$, and integrating over $\Omega$, then it follows from integrating by parts, and using $\sup_{0\leq t\leq \mathcal T}\|T\|_\infty\leq C$ guaranteed by Proposition \ref{prop3.1} (ii) that
\begin{align*}
  &\frac{1}{2}\frac{d}{dt}\int_\Omega|\partial_zT|^2dxdydz+\int_\Omega
  \Big(|\nabla_H\partial_zT|^2+\varepsilon|\partial_z^2T|^2\Big)dxdydz\\
  =&\int_\Omega\left[(v\cdot\nabla_H)T-\left(\int_{-h}^z\nabla_H\cdot vd\xi\right)\left(\partial_zT+\frac{1}{h}\right)\right]\partial_z^2Tdxdydz\\
  =&-\int_\Omega\left[u\cdot\nabla_HT-(\nabla_H\cdot v)\partial_zT-h^{-1}(\nabla_H\cdot v)\right]\partial_zTdxdydz\\
  =&\int_\Omega[\nabla_H\cdot(u\partial_zT)T-v\cdot\nabla_H|\partial_zT|^2+h^{-1}(\nabla_H\cdot v)\partial_zT]dxdydz\\
  \leq&C\int_\Omega(|\nabla_Hu||\partial_zT||T|+|u||\nabla_H\partial_zT||T|+|v||\partial_zT||\nabla_H
  \partial_zT|+|\nabla_Hv||\partial_zT|)dxdydz\\
  \leq&\frac{1}{2}\|\nabla_H\partial_zT\|_2^2+C\|(\nabla_Hu,u,\partial_zT,
  \nabla_Hv)\|_2^2+C\|v\|_\infty^2\|\partial_zT\|_2^2\\
  \leq&\frac{1}{2}\|\nabla_H\partial_zT\|_2^2+C\|(\nabla_Hu,u,\nabla_Hv)\|_2^2
  +C(1+\|v\|_\infty^2)\|\partial_zT\|_2^2,
\end{align*}
from which one obtains (ii).

(iii) Multiplying equation (\ref{eq1}) by $-\Delta_Hv$, and integrating over $\Omega$, then it follows from integrating by parts and the Cauchy inequality that
\begin{align}
  &\frac{1}{2}\frac{d}{dt}\int_\Omega|\nabla_Hv|^2dxdydz+\int_\Omega\Big(
  |\Delta_Hv|^2+\varepsilon|\nabla_H\partial_zv|^2\Big)dxdydz\nonumber\\
  =&\int_\Omega\left[(v\cdot\nabla_H)v-\left(\int_{-h}^z\nabla_H\cdot vd\xi\right)u-\nabla_H\left(\int_{-h}^zTd\xi\right)\right]\cdot\Delta_Hvdxdydz\nonumber\\
  \leq&C(\|v\|_\infty\|\nabla_Hv\|_2\|\Delta_Hv\|_2+\|\nabla_HT\|_2\|\Delta_Hv\|_2)\nonumber\\
  &+C\int_M\left(\int_{-h}^h|\nabla_Hv|dz\right)\left(\int_{-h}^h|u||\Delta_Hv|dz\right)dxdy\nonumber\\
  \leq&\frac{1}{4}\|\Delta_Hv\|_2^2+C\|v\|_\infty^2\|\nabla_Hv\|_2^2+C\|\nabla_HT\|_2^2\nonumber\\
  &+C\int_M\left(\int_{-h}^h|\nabla_Hv|dz\right)\left(\int_{-h}^h|u||\Delta_Hv|dz\right)dxdy.
  \label{esthv1}
\end{align}
It follows from the H\"older, Lemma \ref{lem2.4}, Gagliardo-Nirenberg and Young inequalities that
\begin{align}
  &C\int_M\left(\int_{-h}^h|\nabla_Hv|dz\right)
  \left(\int_{-h}^h|u||\Delta_Hv|dz\right)dxdy\nonumber\\
  \leq&C\int_M\left(\int_{-h}^h|\nabla_Hv|dz\right)\left(\int_{-h}^h|u|^2dz\right)^{\frac{1}{2}}
  \left(\int_{-h}^h|\Delta_Hv|^2dz\right)^{\frac{1}{2}}dxdy\nonumber\\
  \leq&C\left[\int_M\left(\int_{-h}^h|\nabla_Hv|dz\right)^4dxdy\right]^{\frac{1}{4}}
  \left[\int_M\left(\int_{-h}^h|u|^2dz\right)^2dxdy\right]^{\frac{1}{4}}\nonumber\\
  &\times\left(\int_\Omega|\Delta_Hv|^2dxdydz\right)^{\frac{1}{2}}\nonumber\\
  \leq&C\left(\int_{-h}^h\|\nabla_Hv\|_{4,M}dz\right)\left[\int_{-h}^h\left(\int_M|u|^4dz\right)^{\frac{1}{2}}
  dxdy\right]^{\frac{1}{2}}\|\Delta_Hv\|_2\nonumber\\
  =&C\left(\int_{-h}^h\|\nabla_Hv\|_{4,M}dz\right)\left(\int_{-h}^h\|u\|_{4,M}^2dz\right)^{\frac{1}{2}}
  \|\Delta_Hv\|_2\nonumber\\
  \leq&C\left(\int_{-h}^h\|v\|_{2,M}^{\frac{1}{4}}\|\nabla_H^2v\|_{2,M}^{\frac{3}{4}}dz\right)\|u\|_4
  \|\Delta_Hv\|_2\nonumber\\
  \leq&C\|v\|_2^{\frac{1}{4}}\|u\|_4\|\Delta_Hv\|_2^{\frac{7}{4}}\leq\frac{1}{4}\|\Delta_Hv\|_2^2
  +C\|v\|_2^2\|u\|_4^8.\label{4.2-1}
\end{align}
Substituting this into (\ref{esthv1}), and recalling that $\sup_{0\leq t\leq\mathcal T}\|v\|_2^2\leq C$ guaranteed by Proposition \ref{prop3.1} (i), one obtains
\begin{equation*}
  \frac{d}{dt}\|\nabla_Hv\|_2^2+\|(\Delta_Hv,\sqrt\varepsilon\nabla_H\partial_zv)\|_2^2\leq C\|v\|_\infty^2\|\nabla_Hv\|_2^2+C(\|\nabla_HT\|_2^2+\|u\|_4^8),
\end{equation*}
proving (iii).

(iv) Multiplying equation (\ref{eq3}) by $-\Delta_H T$, and integrating over $\Omega$, then it follows from integrating by parts and the Cauchy inequality that
\begin{align}
  &\frac{1}{2}\frac{d}{dt}\int_\Omega|\nabla_HT|^2dxdydz+\int_\Omega\Big(|\Delta_H T|^2+\varepsilon|\nabla_H\partial_zT|^2\Big)dxdydz\nonumber\\
  =&\int_\Omega\left[v\cdot\nabla_HT-\left(\int_{-h}^z\nabla_H\cdot vd\xi\right)\left(\partial_zT+\frac{1}{h}\right)\right]\Delta_H Tdxdydz\nonumber\\
  \leq&C(\|v\|_\infty\|\nabla_HT\|_2\|\Delta_HT\|_2+\|\nabla_Hv\|_2\|\Delta_HT\|_2)\nonumber\\
  &-\int_\Omega\left(\int_{-h}^z\nabla_H\cdot vd\xi\right)\partial_zT\Delta_HTdxdydz\nonumber\\
  \leq&\frac{1}{4}\|\Delta_HT\|_2^2+C\|v\|_\infty^2\|\nabla_HT\|_2^2+C\|\nabla_Hv\|_2^2\nonumber\\
  &-\int_\Omega\left(\int_{-h}^z\nabla_H\cdot vd\xi\right)\partial_zT\Delta_HTdxdydz. \label{estht1}
\end{align}
It follows from integrating by parts, Lemma \ref{lad} and Proposition \ref{prop3.1} (ii) that
\begin{align*}
  &-\int_\Omega\left(\int_{-h}^z\nabla_H\cdot vd\xi\right)\partial_zT\Delta_HTdxdydz\\
  =&\int_\Omega\left(\int_{-h}^z\nabla_H\nabla_H\cdot vd\xi\partial_zT+\int_{-h}^z\nabla_H\cdot vd\xi\nabla_H\partial_zT\right)\cdot\nabla_HTdxdydz\\
  \leq&C\int_M\left(\int_{-h}^h|\nabla_H^2v|dz\right)\left(\int_{-h}^h|\partial_zT||\nabla_HT|dz
  \right)dxdy\\
  &+C\int_M\left(\int_{-h}^h|\nabla_Hv|dz\right)\left(\int_{-h}^h|\nabla_H\partial_zT|
  |\nabla_HT|dz\right)dxdy\\
  \leq&C\|\nabla_H^2v\|_2\|\partial_zT\|_2^{\frac{1}{2}}\left(\|\partial_zT\|_2^{\frac{1}{2}}+\|\nabla_H
  \partial_zT\|_2^{\frac{1}{2}}\right)\|T\|_\infty^{\frac{1}{2}}\|\nabla_H^2T\|_2^{\frac{1}{2}}\\
  &+C\|\nabla_Hv\|_2^{\frac{1}{2}}\|\nabla_H^2v\|_2^{\frac{1}{2}}\|\nabla_H\partial_zT\|_2
  \|T\|_\infty^{\frac{1}{2}}\|\nabla_H^2T\|_2^{\frac{1}{2}}\\
  \leq&\frac{1}{4}\|\nabla_H^2T\|_2^2+C[\|\nabla_H^2v\|_2^2+\|\partial_zT\|_2^2(\|\partial_z
  T\|_2^2+\|\nabla_H\partial_zT\|_2^2)\\
  &+\|\nabla_H\partial_zT\|_2^2+\|\nabla_Hv\|_2^2\|\nabla_H^2v\|_2^2]\\
  \leq&\frac{1}{4}\|\Delta_HT\|_2^2+C\left(1+\|(\partial_zT,\nabla_Hv)\|_2^2\right)^2\left( 1+\|(\nabla_H\partial_zT,\Delta_Hv)\|_2^2\right).
  \end{align*}
Substituting this into (\ref{estht1}) yields
\begin{align*}
  &\frac{d}{dt}\|\nabla_HT\|_2^2+\|(\Delta_HT,\sqrt\varepsilon\nabla_H\partial_T)\|_2^2\\
  \leq& C\Big(1+\|(\partial_zT,\nabla_Hv)\|_2^2\Big)^2
  \Big(1+\|(\nabla_H\partial_zT,\Delta_Hv)\|_2^2\Big),
\end{align*}
proving (iv).
\end{proof}

Now we consider the energy inequalities for the second order derivatives. We have the following proposition.

\begin{proposition}
  \label{prop4.2}
Let $(v,T)$ be the global strong solution of system (\ref{eq1})--(\ref{eq3}) subject to the boundary and initial conditions (\ref{BC1})--(\ref{IC}). Then for any $\mathcal T\in(0,\infty)$, we have the following:

(i) $L^\infty(0,\mathcal T; L^2(\Omega))$ estimate on $\partial_zu$:
\begin{align*}
  &\frac{d}{dt}\|\partial_zu\|_2^2+\|(\nabla_H\partial_zu,\sqrt\varepsilon
  \partial_z^2u)\|_2^2\\
  \leq& C\Big(1+\|v\|_\infty^2\Big)\|\partial_zu\|_2^2+C\Big(\||u||\nabla_Hu|\|_2^2
  +\|\nabla_H\partial_zT\|_2^2\Big);
\end{align*}

(ii) $L^\infty(0,\mathcal T; L^2(\Omega))$ estimate on $\partial_z^2T$:
\begin{align*}
  &\frac{d}{dt}\|\partial_z^2T\|_2^2+\|(\nabla_H\partial_z^2T,\sqrt\varepsilon
  \partial_z^3T)\|_2^2\nonumber\\
  \leq&C\Big(1+\|v\|_\infty^2\Big)\|\partial_z^2T\|_2^2+C\Big(1+\|(\partial_z u,\partial_z T)\|_2^2\Big)^2\nonumber\\
  &\times\Big(1+\|(\nabla_Hu,\nabla_H\partial_zu,
  \nabla_H\partial_zT)\|_2^2\Big);
\end{align*}

(iii) $L^\infty(0,\mathcal T; L^2(\Omega))$ estimate on $\nabla_Hu$:
\begin{align*}
  &\frac{d}{dt}\|\nabla_Hu\|_2^2+\|(\Delta_Hu,\sqrt\varepsilon\nabla_H\partial_zu)\|_2^2\\
  \leq&C\|v\|_\infty^2\|\nabla_Hu\|_2^2+C\|(u,\nabla_Hv,\partial_zu,\nabla_HT)\|_2^2\\
  &\times
  \|(\nabla_Hu,\Delta_Hv,\nabla_H\partial_zu)\|_2^2;
\end{align*}

(iv) $L^\infty(0,\mathcal T; L^2(\Omega))$ estimate on $\nabla_H\partial_zT$:
\begin{align*}
  &\frac{d}{dt}\|\nabla_H\partial_zT\|_2^2+\|(\Delta_H\partial_zT,\nabla_H\partial_z^2T)
  \|_2^2\nonumber\\
  \leq&C\|v\|_\infty^2\|\nabla_H\partial_zT\|_2^2+C\Big(1+\|(\nabla_Hv,
  \nabla_HT,\partial_zu,\partial_z^2T)\|_2^2\Big)^2\\
  &\times\Big(1+\|(\Delta_Hv,\Delta_HT,
  \nabla_H\partial_zu,\nabla_H\partial_z^2
  T)\|_2^2\Big);
\end{align*}

(v) $L^\infty(0,\mathcal T; L^2(\Omega))$ estimate on $\Delta_H v$ and $\Delta_HT$:
\begin{align*}
  &\frac{d}{dt}\|(\Delta_H v,\Delta_HT)\|_2^2+\|(\nabla_H\Delta_H v,\nabla_H\Delta_HT,\sqrt\varepsilon\partial_z\Delta_Hv,\sqrt\varepsilon
  \partial_z\Delta_HT)\|_2^2\nonumber\\
  \leq&C\Big(1+\|v\|_\infty^2\Big)\|(\Delta_Hv,\Delta_HT)\|_2^2+C\Big(1+\|(\nabla_Hv,\nabla_H
  T,\nabla_Hu,\partial_z^2T,\nabla_H\partial_zT)\|_2^2\\
  &+\|u\|_4^4\Big)^3\Big(1+\|(\Delta_Hv,\Delta_HT,\Delta_Hu,\Delta_H\partial_zT)
  \|_2^2\Big),
\end{align*}
where $C$ is a positive constant depending only on $h,\mathcal T$ and the initial data.
\end{proposition}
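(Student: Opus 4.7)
My plan is to prove each of the five items by a common recipe: differentiate equation~(\ref{eq1}) or (\ref{eq3}) (possibly in combination with the $u$-equation~(\ref{u}) derived in the proof of Proposition~\ref{prop4.1}) in the direction matching the target derivative, test the result against that same derivative, integrate by parts, and then control the residual trilinear terms via the anisotropic Ladyzhenskaya inequality of Lemma~\ref{lad}. Throughout, I use the $L^\infty$-bound on $T^*$ and the basic $L^2$-bound on $(v,T)$ from Proposition~\ref{prop3.1}(i)--(ii); the coefficient $\|v\|_\infty^2$ is kept explicit, to be absorbed later through Lemma~\ref{log}. The key repeated identity is the divergence-free cancellation $\int(v\cdot\nabla_H)f\cdot f + \int w\,\partial_z f\cdot f = -\tfrac12\int(\nabla_H\cdot v+\partial_z w)f^2 = 0$ arising from (\ref{eq2}), which kills the top-order advective contribution at every level.

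For item (i), I test (\ref{u}) against $-\partial_z^2 u$; after integrating by parts in $z$, the cancellation eliminates $(v\cdot\nabla_H)\partial_z u\cdot\partial_z u + w\,\partial_z^2 u\cdot\partial_z u$, and the remaining pieces $(u\cdot\nabla_H)u\cdot\partial_z u$, $(\partial_z u\cdot\nabla_H)v\cdot\partial_z u$, $(\nabla_H\cdot v)\partial_z u\cdot\partial_z u$, and $\nabla_H\partial_z T\cdot\partial_z u$ are bounded respectively by $\||u||\nabla_H u|\|_2\|\partial_z u\|_2$, by $\|v\|_\infty\|\partial_z u\|_2\|\nabla_H\partial_z u\|_2$ (after integrating $\nabla_H$ off $v$), and by $\|\nabla_H\partial_z T\|_2\|\partial_z u\|_2$, each Young-absorbed into the stated inequality. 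Item (ii) is the parallel computation: test (\ref{eq3}) against $\partial_z^4 T$, integrate by parts in $z$ twice, apply the same cancellation, and handle residuals like $(\partial_z u\cdot\nabla_H T)\partial_z^2 T$, $u\cdot\nabla_H\partial_z T\cdot\partial_z^2 T$, $(\nabla_H\cdot u)(\partial_z T+1/h)\partial_z^2 T$ via Lemma~\ref{lad}. For (iii) I test (\ref{u}) against $-\Delta_H u$: the coefficient $\|v\|_\infty^2\|\nabla_H u\|_2^2$ comes exclusively from $\int(v\cdot\nabla_H)u\cdot\Delta_H u\le\|v\|_\infty\|\nabla_H u\|_2\|\Delta_H u\|_2$, while the three other trilinears $\int w\,\partial_z u\cdot\Delta_H u$, $\int(u\cdot\nabla_H)v\cdot\Delta_H u$, and $\int(\nabla_H\cdot v)u\cdot\Delta_H u$ are controlled by Lemma~\ref{lad}, pairing one factor from $\{u,\nabla_H v,\partial_z u,\nabla_H T\}$ with one from $\{\nabla_H u,\Delta_H v,\nabla_H\partial_z u\}$ plus a $\|\Delta_H u\|_2^{1/2}$-type weight, yielding after Young exactly the stated right-hand side. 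Item (iv) mirrors (iii) on the $\partial_z T$-equation (obtained by differentiating (\ref{eq3}) once in $z$) tested against $-\Delta_H\partial_z T$, with the extra contribution $(u\cdot\nabla_H)T\cdot\Delta_H\partial_z T$ assembling together with the already familiar pieces into the $(1+\|\cdots\|_2^2)^2(1+\|\cdots\|_2^2)$ structure.

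Item (v), where I apply $\Delta_H$ to both (\ref{eq1}) and (\ref{eq3}) and test against $\Delta_H v$ and $\Delta_H T$, is the hardest step. The pressure decouples nicely: $\int\Delta_H v\cdot\nabla_H\Delta_H p_s = -\int_M\Delta_H p_s\int_{-h}^h\Delta_H(\nabla_H\cdot v)\,dz = 0$ by (\ref{eq2}), and the baroclinic piece $\int\Delta_H v\cdot\nabla_H\Delta_H\int_{-h}^zT\,d\xi$ is handled by Young against $\tfrac18\|\nabla_H\Delta_H v\|_2^2 + C\|\Delta_H T\|_2^2$. The main obstacle lies in the Leibniz expansions of the four nonlinearities $\Delta_H((v\cdot\nabla_H)v)$, $\Delta_H(w\,\partial_z v)$, $\Delta_H((v\cdot\nabla_H)T)$, $\Delta_H(w(\partial_z T+1/h))$: each generates several trilinear integrals mixing first, second, and third derivatives of $v,T$, and to land precisely on the right-hand side $(1+\|(\nabla_Hv,\nabla_HT,\nabla_Hu,\partial_z^2T,\nabla_H\partial_zT)\|_2^2+\|u\|_4^4)^3(1+\|(\Delta_Hv,\Delta_HT,\Delta_Hu,\Delta_H\partial_zT)\|_2^2)$, each integral must be processed with a term-specific branch of Lemma~\ref{lad} that keeps exactly one factor at the dissipation level $\|\nabla_H\Delta_H\cdot\|_2$ (absorbed by Young) and none at the forbidden level $\|\Delta_H\Delta_H\cdot\|_2$, while the $\|u\|_4^4$ contribution arises from controlling a term of the form $\||u||\nabla_H u|\|_2\|\nabla_H\Delta_H v\|_2$ via the inherited $L^4$-bound on $u$ from Proposition~\ref{prop3.1}(iii) combined with Proposition~\ref{prop4.1}(i) at $q=4$. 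This delicate term-by-term branch selection is the technical heart of the proposition.
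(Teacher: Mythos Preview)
Your plan is essentially the paper's own approach: differentiate, test against the matching derivative, cancel the top-order transport via the divergence-free structure, and control the remaining trilinears with Lemma~\ref{lad} and Young. The minor variants you describe (testing (\ref{u}) against $-\partial_z^2u$ rather than first differentiating in $z$ and testing against $\partial_zu$; testing (\ref{eq3}) against $\partial_z^4T$; applying $\Delta_H$ and testing against $\Delta_Hv$ rather than applying $\nabla_H$ and testing against $-\nabla_H\Delta_Hv$) are all equivalent after one integration by parts and lead to exactly the same commutator terms.

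One correction in part (v): the $\|u\|_4^4$ factor does \emph{not} arise from a term of the form $\||u||\nabla_Hu|\|_2\|\nabla_H\Delta_Hv\|_2$; no such product appears in the Leibniz expansion. It comes instead from the piece $(\nabla_Hw)\cdot u$ in the expansion of $\nabla_H(w\,\partial_zv)$, which after testing against $\nabla_H\Delta_Hv$ gives
\[
\int_M\Big(\int_{-h}^h|\nabla_H^2v|\,dz\Big)\Big(\int_{-h}^h|u|\,|\nabla_H\Delta_Hv|\,dz\Big)\,dxdy,
\]
and this is handled by the same Gagliardo--Nirenberg / H\"older chain you already saw in (\ref{4.2-1}), yielding $\tfrac18\|\nabla_H\Delta_Hv\|_2^2+C\|\nabla_Hv\|_2^2\|u\|_4^8$. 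The companion term with $|\partial_zT|$ in place of $|u|$ is treated the same way, after first observing (via integration by parts and $\|T\|_\infty\le C$) that $\|\partial_zT\|_4^2\le C\|\partial_z^2T\|_2$, which is why $\|\partial_z^2T\|_2$ rather than $\|\partial_zT\|_4$ appears on the right of (v). With this one adjustment your sketch matches the paper's proof.
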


\begin{proof}
(i) Differentiating equation (\ref{u}) with respect to $z$, one can easily check that $\partial_zu$ satisfies
\begin{eqnarray*}
  &\partial_t\partial_zu+(v\cdot\nabla_H)\partial_zu-\left(\int_{-h}^z\nabla_H\cdot v(x,y,\xi,t)d\xi\right)\partial_z^2u
\\
  &-\Delta_H\partial_zu-\varepsilon\partial_z^3u+2(u\cdot\nabla_H)u-(\nabla_H\cdot u)u+(\partial_zu\cdot\nabla_H)v\\
  &-2(\nabla_H\cdot v)\partial_zu+f_0k\times\partial_zu-\nabla_H\partial_zT=0.
\end{eqnarray*}
Multiplying the above equation by $\partial_zu$, and integrating over $\Omega$, then it follows from integrating by parts and the Cauchy inequality that
\begin{align*}
  &\frac{1}{2}\frac{d}{dt}\int_\Omega|\partial_zu|^2dxdydz
  +\int_\Omega\Big(|\nabla_H\partial_zu|^2+\varepsilon|\partial_z^2u|^2\Big)dxdydz\\
  =&\int_\Omega[(\nabla_H\cdot u)u-2(u\cdot\nabla_H)u+2(\nabla_H\cdot v)\partial_zu-(\partial_zu\cdot\nabla_H)v+\nabla_H\partial_zT]\cdot \partial_zudxdydz\\
  =&\int_\Omega \{[(\nabla_H\cdot u)u-2(u\cdot\nabla_H)u+\nabla_H\partial_zT]\cdot\partial_zu -2v\cdot\nabla_H|\partial_zu|^2\\
  &+\nabla_H\cdot(\partial_zu\otimes\partial_zu)\cdot v \}dxdydz\\
  \leq&C\int_\Omega(|u||\nabla_Hu||\partial_zu|+|v||\partial_zu||\nabla_H\partial_zu|+
  |\nabla_H\partial_zT||\partial_zu|)dxdydz\\
  \leq&C(\||u|\nabla_Hu\|_2\|\partial_zu\|_2+\|v\|_\infty\|\partial_zu\|_2\|\nabla_H\partial_zu\|_2
  +\|\nabla_H\partial_zT\|_2\|\partial_zu\|_2)\\
  \leq&\frac{1}{2}\|\nabla_H\partial_zu\|_2^2+C(\||u|\nabla_Hu\|_2^2+\|\nabla_H\partial_zT\|_2^2)
  +C(1+\|v\|_\infty^2)\|\partial_zu\|_2^2,
\end{align*}
from which, one obtains (i).

(ii) Differentiating equation (\ref{eq3}) with respect to $z$ yields
\begin{equation}
  \label{zt}
  \begin{split}
  &\partial_t\partial_zT+v\cdot\nabla_H\partial_zT-\left(\int_{-h}^z\nabla_H\cdot vd\xi\right)\partial_z^2T+u\cdot\nabla_HT\\
  &\quad \quad-(\nabla_H\cdot v)\left(\partial_zT+\frac{1}{h}\right)-\Delta_H\partial_zT-\varepsilon\partial_z^3T=0.
  \end{split}
\end{equation}
Multiplying the above equation by $-\partial_z^3T$, integrating over $\Omega$, and using the facts
\begin{eqnarray*}
  &|\nabla_H\partial_zT(x,y,z,t)|\leq \int_{-h}^h|\nabla_H\partial_z^2T(x,y,\xi,t)|d\xi,\\
  &|u(x,y,z,t)|\leq \int_{-h}^h|\partial_zu(x,y,\xi,t)|d\xi,
\end{eqnarray*}
it follows from integrating by parts, Lemma \ref{lad}, Proposition \ref{prop3.1} and the Young inequality that
\begin{align*}
  &\frac{1}{2}\frac{d}{dt}\int_\Omega|\partial_z^2T|^2dxdydz+
  \int_\Omega\Big(|\nabla_H\partial_z^2T|^2+\varepsilon|\partial_z^3T|^2\Big)
  dxdydz\\
  =&\int_\Omega\left[v\cdot\nabla_H\partial_zT- \int_{-h}^z\nabla_H\cdot vd\xi \partial_z^2T+u\cdot\nabla_HT-\nabla_H\cdot v(\partial_zT+h^{-1})\right]\partial_z^3Tdxdydz\\
  =&-\int_\Omega[2u\cdot\nabla_H\partial_zT-2\nabla_H\cdot v\partial_z^2T+\partial_zu\cdot\nabla_HT -\nabla_H\cdot u (\partial_zT+h^{-1} ) ]\partial_z^2Tdxdydz\\
  =&\int_\Omega[2\partial_z(u\cdot\nabla_H\partial_zT)\partial_zT-2v\cdot\nabla_H|\partial_z^2T|^2+ \nabla_H\cdot(\partial_zu\partial_z^2T)T\\
  &+(\nabla_H\cdot u)\frac{1}{2}\partial_z|\partial_zT|^2+h^{-1}(\nabla_H\cdot u)\partial_z^2T]dxdydz\\
  =&\int_\Omega[2(\partial_zu\cdot\nabla_H\partial_zT+u\cdot\nabla_H\partial_z^2T)\partial_zT-4v\cdot \nabla_H\partial_z^2T\partial_z^2T\\
  &+(\nabla_H\cdot\partial_zu\partial_z^2T+\partial_zu\cdot\nabla_H\partial_z^2T)T+\frac{1}{2}\partial_z u\cdot\nabla_H|\partial_zT|^2+h^{-1}\nabla_H\cdot u\partial_z^2T]dxdydz\\
  =&\int_\Omega[3\partial_zu\cdot\nabla_H\partial_zT\partial_zT+2u\cdot\nabla_H\partial_z^2T\partial_zT -4v\cdot\nabla_H\partial_z^2T\partial_z^T\\
  &+(\nabla_H\cdot\partial_zu\partial_z^2T+\partial_zu\cdot\nabla_H\partial_z^2T)T+h^{-1}\nabla_H\cdot u\partial_z^2T]dxdydz\\
  \leq&C\int_M\left(\int_{-h}^h
  |\nabla_H\partial_z^2T|dz\right)\left(\int_{-h}^h|\partial_zu||\partial_zT|dz\right)dxdy \\
  &+C\int_M\left(\int_{-h}^h|\partial_zu|dz\right)\left(\int_{-h}^h|\partial_zT||\nabla_H
  \partial_z^2T|dz\right)dxdy+C(\|v\|_\infty\|\partial_z^2T\|_2\|\nabla_H
  \partial_z^2T\|_2\\
  &+\|\nabla_H\partial_zu\|_2\|\partial_z^2T\|_2
  +\|\partial_zu\|_2\|\nabla_H\partial_z^2T\|_2+\|\nabla_Hu\|_2\|\partial_z^2T\|_2)\\
  \leq&C\|\nabla_H\partial_z^2T\|_2
  \|\partial_zu\|_2^{\frac{1}{2}}(\|\partial_zu\|_2^{\frac{1}{2}}+\|\nabla_H
  \partial_zu\|_2^{\frac{1}{2}})\|\partial_zT\|_2^{\frac{1}{2}}(\|\partial_zT\|_2^{\frac{1}{2}}+
  \|\nabla_H\partial_zT\|_2^{\frac{1}{2}}) \\
  &+C(\|v\|_\infty\|\partial_z^2T\|_2\|\nabla_H
  \partial_z^2T\|_2+\|\nabla_H\partial_zu\|_2\|\partial_z^2T\|_2\\
  &+\|\partial_zu\|_2\|\nabla_H\partial_z^2T\|_2+ \|\nabla_Hu\|_2\|\partial_z^2T\|_2)\\
  \leq&\frac{1}{2}\|\nabla_H\partial_z^2T\|_2^2+C[\|\partial_zu\|_2^2(\|\partial_zu\|_2^2+\|\nabla_H\partial_z
  u\|_2^2)+\|\partial_zT\|_2^2(\|\partial_zT\|_2^2+\|\nabla_H\partial_zT\|_2^2)]\\
  &+C(\|v\|_\infty^2+1)\|\partial_z^2T\|_2^2+
  C(\|\nabla_Hu\|_2^2+\|\nabla_H\partial_zu\|_2^2+\|\partial_zu\|_2^2)\\
  \leq&\frac{1}{2}\|\nabla_H\partial_z^2T\|_2^2+C(\|v\|_\infty^2+1)\|\partial_z^2T\|_2^2+C
  (1+\|\partial_zu\|_2^2+\|\partial_zT\|_2^2)^2\\
  &\times(1+\|\nabla_Hu\|_2^2+\|\nabla_H\partial_zu\|_2^2+\|\nabla_H\partial_zT\|_2^2),
\end{align*}
from which one obtains (ii).

(iii) Multiplying equation (\ref{u}) by $-\Delta_Hu$, integrating over $\Omega$, and using the fact that
$|u(x,y,z,t)|\leq\int_{-h}^h|\partial_zu(x,y,\xi,t)|d\xi$, it follows from
integrating by pars, Lemma \ref{lad} and the Young inequality that
\begin{align*}
  &\frac{1}{2}\frac{d}{dt}\int_\Omega|\nabla_Hu|^2dxdydz
  +\int_\Omega\Big(|\Delta_Hu|^2+\varepsilon|\nabla_H\partial_zu|^2\Big)dxdydz\\
  =&\int_\Omega\left[(v\cdot\nabla_H)u-\int_{-h}^z\nabla_H\cdot vd\xi\partial_zu+(u\cdot\nabla_H)v-(\nabla_H\cdot v)u-\nabla_HT\right]\cdot\Delta_Hudxdydz\\
  \leq&\int_\Omega\bigg[|v||\nabla_Hu||\Delta_Hu|+\left(\int_{-h}^h|\nabla_Hv|dz\right)
  |\partial_zu||\Delta_Hu|\\
  &+|u||\nabla_Hv||\Delta_Hu|+|\nabla_HT||\Delta_Hu|\bigg]dxdydz\\
  \leq
  &\|v\|_\infty\|\nabla_Hu\|_2\|\Delta_Hu\|_2+C\int_M\left(\int_{-h}^h|\nabla_Hv|dz\right)\left(\int_{-h}^h |\partial_zu||\Delta_Hu|dz\right)dxdy\\
  &+C\int_M\left(\int_{-h}^h|\partial_zu|dz\right)\left(\int_{-h}^h|\nabla_Hv||\Delta_Hu|dz\right) dxdy+\|\nabla_HT\|_2\|\Delta_Hu\|_2\\
  \leq&\|v\|_\infty\|\nabla_Hu\|_2\|\Delta_Hu\|_2
  +C\|\nabla_Hv\|_2^{\frac{1}{2}}\|\nabla^2_Hv\|_2^{\frac{1}{2}}\|
  \partial_zu\|_2^{\frac{1}{2}}(\|
  \partial_zu\|_2^{\frac{1}{2}}+\|\nabla_H\partial_zu\|_2^{\frac{1}{2}})\|\Delta_Hu\|_2\\
  &+C\|\partial_zu\|_2^{\frac{1}{2}}(\|\partial_zu\|_2^{\frac{1}{2}}+\|\nabla_H\partial_zu\|_2
  ^{\frac{1}{2}})
  \|\nabla_Hv\|_2^{\frac{1}{2}}\|\nabla_H^2v\|_2^{\frac{1}{2}}\|\Delta_Hu\|_2
  +\|\nabla_HT\|_2\|\Delta_Hu\|_2\\
  \leq&\frac{1}{2}\|\Delta_Hu\|_2^2+C\|v\|_\infty^2\|\nabla_Hu\|_2^2+C[\|\partial_zu\|_2^2(\|\partial_zu\|_2^2+\|\nabla_H
  \partial_zu\|_2^2)
\\
  &+\|\nabla_Hv\|_2^2\|\Delta_Hv\|_2^2+\|\nabla_HT\|_2^2]\\
  \leq&\frac{1}{2}\|\Delta_Hu\|_2^2+C\|v\|_\infty^2\|\nabla_Hu\|_2^2+C(1+\|\nabla_Hv\|_2^2
\\
  &+\|\nabla_HT\|_2^2+\|\partial_zu\|_2^2)^2(1+\|\Delta_Hv\|_2^2+\|\nabla_H\partial_zu\|_2^2),
\end{align*}
from which, one obtains (iii).

(iv) Multiplying equation (\ref{zt}) by $-\Delta_H\partial_zT$, integrating the resulting equation over $\Omega$, and using the facts that
\begin{eqnarray*}
  &|u(x,y,z,t)|\leq\int_{-h}^h|\partial_zu(x,y,\xi,t)|d\xi,\\
  &|\partial_zT(x,y,z,t)|\leq\int_{-h}^h|\partial_z^2T(x,y,\xi,t)|d\xi,
\end{eqnarray*}
it follows from integration by parts, Lemma \ref{lad} and the Young inequality that
\begin{align*}
  &\frac{1}{2}\frac{d}{dt}\int_\Omega|\nabla_H\partial_zT|^2dxdydz+
  \int_\Omega\Big(|\Delta_H\partial_z
  T|^2+\varepsilon|\nabla_H\partial_z^2T|^2\Big)dxdydz\\
  =&\int_\Omega\bigg[v\cdot\nabla_H\partial_zT- \int_{-h}^z\nabla_H\cdot vd\xi \partial_z^2T+u\cdot\nabla_HT -\nabla_H\cdot v(\partial_zT+h^{-1})\bigg]\Delta_H\partial_zTdxdydz\\
  \leq& \int_\Omega\bigg[|v||\nabla_H\partial_zT|+ \bigg(\int_{-h}^h|\nabla_Hv|dz\bigg)
  |\partial_z^2T|+|u||\nabla_HT| \\
  &+|\nabla_Hv||\partial_zT|+|\nabla_Hv|\bigg]|\Delta_H\partial_zT|dxdydz
  \\
  \leq& \|v\|_\infty\|\nabla_H\partial_zT\|_2\|\Delta_H\partial_zT\|_2+ \int_\Omega\left[ \left(\int_{-h}^h|\nabla_Hv|dz\right) |\partial_z^2T|+
  \left(\int_{-h}^h|\partial_zu|dz\right) |\nabla_HT|\right.\\
  &\left.+|\nabla_Hv|\left(\int_{-h}^h|\partial_z^2T|dz\right)\right]|\Delta_H\partial_zT|dxdydz
  + \|\nabla_Hv\|_2\|\Delta_H
  \partial_zT\|_2 \\
  \leq& \|v\|_\infty\|\nabla_H\partial_zT\|_2\|\Delta_H\partial_zT\|_2+ \|\nabla_Hv\|_2\|\Delta_H
  \partial_zT\|_2 \\
  &+\int_M\left(\int_{-h}^h|\nabla_Hv|dz\right)\left(\int_{-h}^h|\partial_z^2T||\Delta_H\partial_zT|dz
  \right)dxdy\\
  &+\int_M\left(\int_{-h}^h|\partial_zu|dz\right)\left(\int_{-h}^h|\nabla_HT||\Delta_H\partial_zT|
  dz\right)dxdy\\
  &+\int_M\left(\int_{-h}^h|\partial_z^2T|dz\right)\left(\int_{-h}^h|\nabla_Hv||\Delta_H\partial_zT| dz\right)dxdy\\
  \leq&C[\|v\|_\infty\|\nabla_H\partial_zT\|_2\|\Delta_H\partial_zT\|_2
  +\|\nabla_Hv\|_2\|\Delta_H\partial_zT\|_2\\
  &+\|\nabla_Hv\|_2^{\frac{1}{2}}\|\nabla_H^2v\|_2^{\frac{1}{2}}\|\partial_z^2T\|_2^{\frac{1}{2}}
  (\|\partial_z^2T\|_2^{\frac{1}{2}}+\|\nabla_H\partial_z^2T\|_2^{\frac{1}{2}})\|\Delta_H\partial_zT\|_2\\
  &+\|\partial_zu\|_2^{\frac{1}{2}}(\|\partial_zu\|_2^{\frac{1}{2}}+\|\nabla_H\partial_zu\|_2^{\frac{1}{2}})
  \|\nabla_HT\|_2^{\frac{1}{2}}\|\nabla_H^2T\|_2^{\frac{1}{2}}\|\Delta_H\partial_zT\|_2
  ]\\
  \leq&\frac{1}{2}\|\Delta_H\partial_zT\|_2^2+C[ \|v\|_\infty^2\|\nabla_H\partial_zT\|_2^2
  +\|\nabla_Hv\|_2^2+\|\nabla_Hv\|_2^2\|\Delta_Hv\|_2^2+\|\nabla_HT\|_2^2\|\Delta_HT\|_2^2\\
  &+\|\partial_z^2T\|_2^2(\|\partial_z^2T\|_2^2+\|\nabla_H
  \partial_z^2T\|_2^2) +\|\partial_zu\|_2^2(\|\partial_zu\|_2^2+
  \|\nabla_H\partial_zu\|_2^2)]\\
  \leq&\frac{1}{2}\|\Delta_H\partial_zT\|_2^2+C[\|v\|_\infty^2\|\nabla_H\partial_zT\|_2^2+
  (1+\|\nabla_Hv\|_2^2+\|\nabla_HT\|_2^2+\|\partial_z^2T\|_2^2+\|\partial_zu\|_2^2)^2\\
  &\times(1+\|\Delta_Hv\|_2^2+\|\Delta_HT\|_2^2
  +\|\nabla_H\partial_z^2T\|_2^2+\|\nabla_H\partial_zu\|_2^2)],
\end{align*}
from which one obtains (iv).

(v) Applying the operator $\nabla_H$ to equations (\ref{eq1}) and (\ref{eq3}), multiplying the resulting equations by $-\nabla_H\Delta_Hv$ and $-\nabla_H\Delta_HT$, respectively, and noticing
that
$$
|\nabla_Hv(x,y,z,t)|\leq\frac{1}{2h}\int_{-h}^h|\nabla_Hv(x,y,\xi,t)|d\xi+\int_{-h}^h|\nabla_H
u(x,y,\xi,t)|d\xi,
$$
it follows from integration by parts
\begin{align}
  &\frac{1}{2}\frac{d}{dt}\int_\Omega(|\Delta_Hv|^2+|\Delta_HT|^2)dxdydz\nonumber\\
  &+\int_\Omega(|\nabla_H\Delta
  _Hv|^2+|\nabla_H\Delta_HT|^2+\varepsilon|\partial_z\Delta_Hv|^2+\varepsilon|\partial_z
  \Delta_HT|^2)dxdydz\nonumber\\
  =&\int_\Omega\nabla_H\left((v\cdot\nabla_H)v-\left(\int_{-h}^z\nabla_H\cdot vd\xi\right)\partial_zv-\nabla_H\left(\int_{-h}^zTd\xi\right)\right):\nabla_H\Delta_Hvdxdydz\nonumber\\
  &+\int_\Omega\nabla_H\left(v\cdot\nabla_HT-\left(\int_{-h}^z\nabla_H\cdot vd\xi\right)
  \left(\partial_zT+\frac{1}{h}\right)\right)\cdot\nabla_H\Delta_HTdxdydz\nonumber\\
  \leq&\int_\Omega\bigg[|v|(|\nabla_H^2v||\nabla_H\Delta_Hv|+|\nabla_H^2T||\nabla_H\Delta_HT|)
  +|\nabla_Hv|(|\nabla_Hv||\nabla_H\Delta_Hv|\nonumber\\
  &+|\nabla_HT||\nabla_H\Delta_HT|)+\left(\int_{-h}^h|\nabla_Hv|dz\right)(|\nabla_Hu||\nabla_H
  \Delta_Hv|+|\nabla_H\partial_zT||\nabla_H\Delta_HT|)\nonumber\\
  &+\left(\int_{-h}^h|\nabla_H^2v|dz
  \right)(|u||\nabla_H\Delta_Hv|+(|\partial_zT|+1)|\nabla_H\Delta_HT|)\nonumber\\
  &+\left(\int_{-h}^h|\nabla_H^2T|dz\right)|\nabla_H\Delta_Hv|\bigg]dxdydz\nonumber\\
  \leq&C[\|v\|_\infty(\|\nabla_H^2v\|_2\|\nabla_H\Delta_Hv\|_2+\|\nabla_H^2T\|_2\|\nabla_H\Delta_H
  T\|_2)\nonumber\\
  &+\|\nabla_H^2v\|_2\|\nabla_H\Delta_HT\|_2+\|\nabla_H^2T\|_2\|\nabla_H\Delta_Hv\|_2]\nonumber\\
  &+C\int_M\int_{-h}^h(|\nabla_Hv|+|\nabla_Hu|)
  dz\int_{-h}^h(|\nabla_Hv||\nabla_H\Delta_Hv|+|\nabla_HT||\nabla_H\Delta_HT|)dzdxdy\nonumber\\
  &+C\int_M\int_{-h}^h|\nabla_Hv|dz
  \int_{-h}^h(|\nabla_Hu||\nabla_H\Delta_Hv|
  +|\nabla_H\partial_zT||\nabla_H\Delta_HT|)dzdxdy\nonumber\\
  &+C\int_M\int_{-h}^h|\nabla_H^2v|dz
  \int_{-h}^h(|u||\nabla_H\Delta_Hv|+|\partial_zT||\nabla_H\Delta_HT|)dzdxdy.\label{esthhvt1}
\end{align}
By Lemma \ref{lad}, one has
\begin{align}
  &C\int_M\int_{-h}^h(|\nabla_Hv|+|\nabla_Hu|)
  dz\int_{-h}^h(|\nabla_Hv||\nabla_H\Delta_Hv|+|\nabla_HT||\nabla_H\Delta_HT|)dzdxdy
  \nonumber\\
  &+C\int_M\int_{-h}^h|\nabla_Hv|dz
  \int_{-h}^h(|\nabla_Hu||\nabla_H\Delta_Hv|
  +|\nabla_H\partial_zT||\nabla_H\Delta_HT|)dzdxdy\nonumber\\
  \leq&C(\|\nabla_Hv\|_2^{\frac{1}{2}}+\|\nabla_Hu\|_2^{\frac{1}{2}})(\|\nabla_H^2v\|_2^{\frac{1}{2}}
  +\|\nabla_H^2u\|_2^{\frac{1}{2}})(\|\nabla_Hv\|_2^{\frac{1}{2}}+\|\nabla_HT\|_2^{\frac{1}{2}})
  (\|\nabla_H^2v\|_2^{\frac{1}{2}}\nonumber\\
  &+\|\nabla_H^2T\|_2^{\frac{1}{2}})(\|\nabla_H\Delta_Hv\|_2+\|\nabla_H\Delta_HT\|_2)
  +C\|\nabla_Hv\|_2^{\frac{1}{2}}\|\nabla_H^2v\|_2^{\frac{1}{2}}(\|\nabla_Hu\|_2^{\frac{1}{2}}
  \nonumber\\
  &+\|\nabla_H\partial_zT\|_2^{\frac{1}{2}})(\|\nabla_H^2u\|_2^{\frac{1}{2}}
  +\|\nabla_H^2\partial_zT\|_2^{\frac{1}{2}})(\|\nabla_H\Delta_Hv\|_2+\|\nabla_H\Delta_HT\|_2)
  \nonumber\\
  \leq&\frac{1}{8}(\|\nabla_H\Delta_Hv\|_2^2+\|\nabla_H\Delta_HT\|_2^2)+C(\|\nabla_Hv\|_2^2+
  \|\nabla_HT\|_2^2+\|\nabla_Hu\|_2^2)(\|\Delta_Hv\|_2^2\nonumber\\
  &+\|\Delta_HT\|_2^2+\|\Delta_Hu\|_2^2)+C[\|\nabla_Hv\|_2^2\|\Delta_Hv\|_2^2+(\|\nabla_Hu\|_2^2+\|
  \nabla_H\partial_zT\|_2^2)\nonumber\\
  &\times(\|\Delta_Hu\|_2^2+\|\Delta_H\partial_zT\|_2^2)]\nonumber\\
  \leq&\frac{1}{8}(\|\nabla_H\Delta_Hv\|_2^2+\|\nabla_H\Delta_HT\|_2^2)+C(\|\nabla_Hv\|_2^2+
  \|\nabla_HT\|_2^2+\|\nabla_Hu\|_2^2+\|\nabla_H\partial_zT\|_2^2)\nonumber\\
  &\times(\|\Delta_Hv\|_2^2+\|\Delta_HT\|_2^2+\|\Delta_Hu\|_2^2+\|\Delta_H\partial_zT\|_2^2).
  \label{esthhvt2}
\end{align}
Similar argument as that for (\ref{4.2-1}) yields
\begin{align}
  &\int_M \int_{-h}^h|\nabla_H^2v|dz  \int_{-h}^h|u||\nabla_H\Delta_Hv|dz dxdy
  \leq\frac{1}{8}\|\nabla_H\Delta_Hv\|_2^2
  +C\|\nabla_Hv\|_2^2\|u\|_4^8.\label{esthhvt3}
\end{align}
It follows from integration by parts and Proposition \ref{prop3.1} (ii) that
\begin{align*}
  &\int_\Omega|\partial_zT|^4dxdydz=-\int_\Omega T\partial_z(|\partial_zT|^2\partial_zT)dxdydz\\
  =&-3\int_\Omega T|\partial_zT|^2\partial_z^2Tdxdydz\leq3\|T\|_\infty\|\partial_z^2T\|_2\|\partial_zT\|_4^2
  \leq C\|\partial_z^2T\|_2\|\partial_zT\|_4^2,
\end{align*}
and thus
$$
\|\partial_zT\|_4^2\leq C\|\partial_z^2T\|_2.
$$
By the aid of this inequality, the same argument as that for (\ref{4.2-1}) yields
\begin{align}
  &\int_M\int_{-h}^h|\nabla_H^2v|dz\int_{-h}^h|\partial_zT||\nabla_H\Delta_HT|dzdxdy\nonumber\\
  \leq&\frac{1}{8}\|\nabla_H\Delta_HT\|_2^2+C\|\nabla_Hv\|_2^2\|\partial_zT\|_4^8
  \leq\frac{1}{8}\|\nabla_H\Delta_HT\|_2^2+C\|\nabla_Hv\|_2^2\|\partial_z^2T\|_2^4.\label{esthhvt4}
\end{align}
Substituting (\ref{esthhvt2})--(\ref{esthhvt4}) into (\ref{esthhvt1}), and using the Young inequality, one obtains
\begin{align*}
  &\frac{d}{dt}\|(\Delta_Hv,\Delta_HT)\|_2^2+\|(\nabla_H\Delta_Hv,
  \nabla_H\Delta_HT,\sqrt\varepsilon\partial_z\Delta_Hv,\sqrt\varepsilon
  \partial_z\Delta_HT)\|_2^2\\
  \leq&C(\|v\|_\infty^2+1)(\|\Delta_Hv\|_2^2+\|\Delta_HT\|_2^2)+C[\|\nabla_Hv\|_2^2
  (\|u\|_4^8+\|\partial_z^2T\|_2^4)]\\
  &+C(\|\nabla_Hv\|_2^2+\|\nabla_HT\|_2^2+\|\nabla_Hu\|_2^2+\|\nabla_H\partial_zT\|_2^2)
  (\|\Delta_Hv\|_2^2\\
  &+\|\Delta_HT\|_2^2+\|\Delta_Hu\|_2^2+\|\Delta_H\partial_zT\|_2^2)\\
  \leq&C(\|v\|_\infty^2+1)(\|\Delta_Hv\|_2^2+\|\Delta_HT\|_2^2)
  +C(1+\|\nabla_Hv\|_2^2+\|\nabla_HT\|_2^2\\
  &+\|\nabla_Hu\|_2^2+\|\nabla_H\partial_zT\|_2^2+\|u\|_4^4+\|\partial_z^2T\|_2^2)^3
  (1+\|\Delta_Hv\|_2^2\\
  &+\|\Delta_HT\|_2^2+\|\Delta_Hu\|_2^2+\|\Delta_H\partial_zT\|_2^2),
\end{align*}
proving (v).
\end{proof}

\section{A priori $H^2$ estimates and global well-posedness}
\label{sec5}

In this section, based on the a priori low order energy estimates established in section \ref{sec3} and the high order energy inequalities established in section \ref{sec4}, we can apply the logarithmic Sobolev embedding inequality (Lemma \ref{log}) and the system version of the Gronwall inequality (Lemma \ref{gronwall}) to obtain the a priori $H^2$ estimate for strong solutions to system (\ref{eq1})--(\ref{eq3}), subject to the boundary and initial conditions (\ref{BC1})--(\ref{IC}), and further establish the global well-posedness of strong solutions to system (\ref{1.8})--(\ref{IC}), or equivalently to system (\ref{1.1})--(\ref{1.7}).

We first focus on the a priori $H^2$ bounds for the strong solutions to the regularized system (\ref{eq1})--(\ref{eq3}) subject to the boundary and initial conditions (\ref{BC1})--(\ref{IC}).

\begin{proposition}
  \label{prop5.1}
Given a positive time $\mathcal T$. Let $(v,T)$ be the strong solution of system (\ref{eq1})--(\ref{eq3}) on $\Omega\times(0,\mathcal T)$, subject to the boundary and initial conditions (\ref{BC1})--(\ref{IC}). Then we have
\begin{align*}
  &\sup_{0\leq t\leq\mathcal T}\|(v,T)\|_{H^2(\Omega)}^2+\int_0^\mathcal{T}
  \Big(\|(\partial_tv,
  \partial_tT)\|_{H^1(\Omega)}^2\\
  &+\|(\nabla_Hv,\nabla_HT,\sqrt\varepsilon \partial_zv,\sqrt\varepsilon\partial_zT)\|_{H^2(\Omega)}^2\Big)
  dt\leq
  C(h,\mathcal T,v_0,T_0),
\end{align*}
where $C(h,\mathcal T,v_0,T_0)$ denotes a positive constant depending only on $h, \mathcal T$ and the initial data.
\end{proposition}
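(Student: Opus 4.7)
The proof combines Propositions \ref{prop3.1}, \ref{prop4.1} and \ref{prop4.2} with the logarithmic Sobolev inequality (Lemma \ref{log}) and the system Gronwall inequality (Lemma \ref{gronwall}). The strategy is to (i) turn the $\|v\|_\infty^2$ factor that appears in every higher-order inequality into something merely logarithmic in the cumulative $H^2$ energy; (ii) arrange the nine energy inequalities from Propositions \ref{prop4.1}(i)--(iv) and \ref{prop4.2}(i)--(v) into a cascade that fits the hypotheses of Lemma \ref{gronwall}; and (iii) read off the $\varepsilon$-independent $H^2$ bound, recovering the time-derivative bound from the equations themselves.

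\textbf{Step 1 (Logarithmic control of $\|v\|_\infty^2$).} I apply Lemma \ref{log} to $v$ with $\lambda=1/2$ and $p=4$. The hypothesis $\sup_{r\geq 2}\|v(\cdot,t)\|_r/\sqrt{r}<\infty$ is supplied by Proposition \ref{prop3.1}(iii) for $r\geq 4$, and by Proposition \ref{prop3.1}(i) with H\"older's inequality for $r\in[2,4)$. Since $H^2(\Omega)\hookrightarrow W^{1,4}(\Omega)$ in three dimensions, this yields
\begin{equation*}
\|v(\cdot,t)\|_\infty^2\leq C\log\bigl(\|v(\cdot,t)\|_{H^2(\Omega)}+e\bigr),
\end{equation*}
where the constant depends only on $h,\mathcal T$ and the initial data.

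\textbf{Step 2 (Assembling the cascade).} I group the $H^2$ quantities into a chain, ordered so that vertical derivatives precede horizontal ones at each order and first-order derivatives precede second-order ones:
\begin{align*}
A_1&=\|u\|_2^2+\|u\|_4^4+e,\\
A_2&=\|\partial_z T\|_2^2+\|\nabla_H v\|_2^2+\|\nabla_H T\|_2^2+e,\\
A_3&=\|\partial_z u\|_2^2+\|\partial_z^2 T\|_2^2+e,\\
A_4&=\|\nabla_H u\|_2^2+\|\nabla_H\partial_z T\|_2^2+e,\\
A_5&=\|\Delta_H v\|_2^2+\|\Delta_H T\|_2^2+e,
\end{align*}
with $B_i$ denoting the sum of the dissipation terms on the left-hand sides of the corresponding inequalities. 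Bundling velocity and temperature contributions together at each ordinal level is essential because the right-hand sides of \ref{prop4.1}(iii)--(iv) and \ref{prop4.2}(iii)--(v) couple them. Summing the appropriate inequalities from Propositions \ref{prop4.1} and \ref{prop4.2} (and applying Proposition \ref{prop4.1}(i) with both $q=2$ and $q=4$ to produce $A_1$), each $A_i$ is seen to satisfy
\begin{equation*}
\frac{d}{dt}A_i+B_i\leq m(t)A_i+\zeta A_{i-1}^\alpha B_{i-1},\qquad m(t):=C\bigl(1+\|v(\cdot,t)\|_\infty^2\bigr),
\end{equation*}
for fixed constants $\alpha,\zeta$. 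The polynomial cross-terms (most notably $\|u\|_4^8$ in Proposition \ref{prop4.1}(iii) and the cubic factor in Proposition \ref{prop4.2}(v)) are absorbed into $\zeta A_{i-1}^\alpha B_{i-1}$ by identifying their factors either as prior $A_{i-1}$'s or as prior dissipations $B_{i-1}$; current-level dissipation pieces that appear on the right-hand side (e.g.\ $\|\Delta_H v\|_2^2$ in \ref{prop4.1}(iv)) are absorbed into the corresponding $B_i$ on the left via Young's inequality, which is legitimate because the $A_j$ factors multiplying them are already controlled by the system Gronwall philosophy.

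\textbf{Step 3 and main obstacle.} Combining Step 1 with the fact that $\|v\|_{H^2}^2\leq C\sum_i A_i$ gives $m(t)\leq C\log(\sum_i A_i(t))$, so Lemma \ref{gronwall} applies and yields
\begin{equation*}
\sum_{i=1}^5 A_i(t)+\int_0^t\sum_{i=1}^5 B_i(s)\,ds\leq Q(\mathcal T),\qquad t\in[0,\mathcal T],
\end{equation*}
with $Q$ depending only on $h$, $\mathcal T$ and the initial data, and \emph{independent of} $\varepsilon$. This is precisely the $L^\infty(0,\mathcal T;H^2)$ bound on $(v,T)$ together with the $L^2(0,\mathcal T;H^2)$ bound on horizontal derivatives and the $\varepsilon$-weighted bound on vertical ones. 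The bound on $(\partial_t v,\partial_t T)$ in $L^2(0,\mathcal T;H^1)$ is then routine: one isolates $\partial_t v$ and $\partial_t T$ from \eqref{eq1} and \eqref{eq3} and estimates every remaining term in $L^2_t H^1_x$ using the spatial bounds just obtained, Sobolev embedding, and elliptic regularity for $p_s$. The real obstacle is Step 2: forcing each of the nine energy inequalities into the precise $mA_i+\zeta A_{i-1}^\alpha B_{i-1}$ form. The chosen ordering and the bundling of horizontal velocity/temperature quantities at each level are dictated by the anisotropic structure of the momentum equation, and it is only by using the logarithmic bound on $\|v\|_\infty^2$ at the very end, rather than inside the individual energy estimates, that the cascade can be closed.
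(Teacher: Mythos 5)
Your overall strategy is the paper's: combine Propositions \ref{prop3.1}, \ref{prop4.1}, \ref{prop4.2} with Lemma \ref{log} (to make $\|v\|_\infty^2$ logarithmic in the $H^2$ energy) and Lemma \ref{gronwall}, then recover the time-derivative bound from the equations. Step 1 is fine (the paper uses $W^{1,6}$ where you use $W^{1,4}$; both are admissible since $p>3$). The problem is in Step 2, and it is not merely cosmetic. By bundling $\|\partial_zT\|_2^2$, $\|\nabla_Hv\|_2^2$ and $\|\nabla_HT\|_2^2$ into a single level $A_2$, you put the right-hand side of Proposition \ref{prop4.1}(iv), namely $C(1+\|(\partial_zT,\nabla_Hv)\|_2^2)^2(1+\|(\nabla_H\partial_zT,\Delta_Hv)\|_2^2)$, into the form $C(1+A_2)^2(1+B_2)$: the dissipations $\|\nabla_H\partial_zT\|_2^2$ and $\|\Delta_Hv\|_2^2$ belong to quantities sitting at the \emph{same} level as $\|\nabla_HT\|_2^2$ in your grouping. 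Lemma \ref{gronwall} only admits a term $\zeta A_{i-1}^\alpha B_{i-1}$ with the \emph{previous} level's dissipation; a term $A_i^\alpha B_i$ cannot be handled by it. Your proposed repair --- absorbing these pieces into the left-hand dissipation $B_2$ by Young's inequality because ``the $A_j$ factors multiplying them are already controlled by the system Gronwall philosophy'' --- is circular: the coefficient $(1+A_2)^2$ is exactly one of the quantities you are trying to bound, and Remark \ref{remark} only licenses treating quantities from \emph{strictly earlier} steps as controlled, never current-step ones. With a coefficient that is neither small nor a priori bounded, the absorption fails.

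The paper avoids this by refusing to bundle: it keeps nine separate quantities $a_1,\dots,a_9$, ordered ($\partial_zT$ before $\nabla_Hv$ before $\nabla_HT$, etc.) precisely so that every right-hand side involves only dissipations of strictly earlier entries, and it nests them cumulatively, $A_1=a_1+e$, $A_i=A_{i-1}+a_i$ (with the $B_i$ likewise accumulating the $b_j$), so that each right-hand side is genuinely of the form $m(t)A_i+\zeta A_{i-1}^3B_{i-1}$. Your five-level scheme works at levels $3$, $4$ and $5$ (there the offending dissipations do come from earlier levels), but level $2$ as you have set it up breaks the cascade, and since that is where the coupling between $\nabla_HT$, $\nabla_Hv$ and $\partial_zT$ lives, the argument does not close as written. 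A secondary point: the paper takes $m(t)=C(1+\|v\|_\infty^2+\|\nabla_Hv\|_2^2+\|\nabla_HT\|_2^2)$, using the $L^1_t$-integrability of the gradient terms from Proposition \ref{prop3.1}(i) to fold additive forcing terms such as $C\|\nabla_HT\|_2^2$ into $m(t)A_i$; your $m(t)=C(1+\|v\|_\infty^2)$ happens to suffice for your grouping but would not for the paper's, which is worth being aware of when you repair Step 2.
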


\begin{proof}
Define nonnegative functions $a_i$ and $b_i$, $i=1,2,\cdots,9$, as follows
\begin{align*}
  &a_1 =\|u\|_2^2 +\|u\|_4^4 ,&&
  b_1 =\|(\nabla_Hu,\sqrt\varepsilon\partial_zu,|u|\nabla_Hu)\|_2^2 ,\\
  &a_2 =\|\partial_zT\|_2^2 ,&&b_2 =\|(\nabla_H\partial_zT,
  \sqrt\varepsilon\partial_z^2T)\|_2^2 ,\\
  &a_3 =\|\nabla_Hv\|_2^2 ,&&b_3 =\|(\Delta_Hv,\sqrt\varepsilon\nabla_H\partial_zv)\|_2^2 ,\\
  &a_4 =\|\nabla_HT\|_2^2 ,&&b_4 =\|(\Delta_HT,\sqrt\varepsilon\nabla_H\partial_zT)\|_2^2 ,\\
  &a_5 =\|\partial_zu\|_2^2 ,&&b_5 =\|(\nabla_H\partial_zu,\sqrt\varepsilon\partial_z^2u)
  \|_2^2 ,\\
  &a_6 =\|\partial_z^2T\|_2^2 ,&&b_6 =\|(\nabla_H\partial_z^2T,\sqrt\varepsilon
  \partial_z^3T)\|_2^2 ,\\
  &a_7 =\|\nabla_Hu\|_2^2 ,&&b_7 =\|(\Delta_Hu,\sqrt\varepsilon\nabla_H\partial_zu)
  \|_2^2 ,\\
  &a_8 =\|\nabla_H\partial_zT\|_2^2 ,&&b_8 =\|(\Delta_H\partial_zT,\sqrt\varepsilon
  \nabla_H\partial_z^2T)\|_2^2 ,\\
  &a_9 =\|(\Delta_Hv,\Delta_HT)\|_2^2,
  &&b_9 =\|(\nabla_H\Delta_Hv,\nabla_H
  \Delta_HT,\sqrt\varepsilon\partial_z\Delta_Hv,\sqrt\varepsilon\partial_z\Delta_HT)\|_2^2.
\end{align*}
By Proposition \ref{prop4.1} and Proposition \ref{prop4.2}, we have
\begin{align*}
\frac{d}{dt}a_1+b_1\leq& C(1+\|v\|_\infty^2)a_1,\\
\frac{d}{dt}a_2+b_2\leq& C(1+\|v\|_\infty^2)a_2+C\|\nabla_Hv\|_2^2+C(a_1+b_1),\\
\frac{d}{dt}a_3+b_3\leq& C\|v\|_\infty^2a_3+C\|\nabla_HT\|_2^2+Ca_1^2,\\
\frac{d}{dt}a_4+b_4\leq& C(1+a_2+a_3)^2(1+b_2+b_3),\\
\frac{d}{dt}a_5+b_5\leq& C(1+\|v\|_\infty^2)a_5+C(b_1+b_2),\\
\frac{d}{dt}a_6+b_6\leq& C(1+\|v\|_\infty^2)a_6+C(1+a_2+a_5)^2(1+b_1+b_2+b_5),\\
\frac{d}{dt}a_7+b_7\leq& C\|v\|_\infty^2a_7+C(a_1+a_3+a_4+a_5)(b_1+b_3+b_5),\\
\frac{d}{dt}a_8+b_8\leq& C\|v\|_\infty^2a_8+C(1+a_3+a_4+a_5+a_6)^2\\
&\times(1+b_3+b_4+b_5+b_6),\\
\frac{d}{dt}a_9+b_9\leq& C(1+\|v\|_\infty^2)a_9+C(1+a_1+a_3+a_4+a_6+a_7+a_8)\\
  &\times(1+b_3+b_4+b_7+b_8).
\end{align*}
Set $A_1(t)=a_1(t)+e$ and $A_i(t)=A_{i-1}(t)+a_i(t)$, $i=2,\cdots,n$. Then one can easily check from the above inequalities that
\begin{eqnarray}
  &\frac{d}{dt}A_1(t)\leq  m(t)A_1(t),\label{gron1}\\
  &\frac{d}{dt}A_i(t)+B_i(t)\leq   m(t)A_i(t)+\zeta A_{i-1}^3B_{i-1},\quad i=2,\cdots,9,\label{gron2}
\end{eqnarray}
where
$$
m(t)=C(1+\|v\|_\infty^2(t)+\|\nabla_Hv\|_2^2(t)+\|\nabla_HT\|_2^2(t)),
$$
for a positive constant $C$.

Recalling the definitions of $a_i, A_i$, $i=1,\cdots,9$, one can easily check
$$
\sum_{i=1}^9A_i\geq\sum_{i=1}^9a_i+e\geq C\|(\nabla v,\nabla T)\|_{H^1}^2+e.
$$
And thus, by Proposition \ref{prop3.1} and the Sobolev and Poincar\'e inequalities, one obtains
\begin{align*}
  \sum_{i=1}^9A_i\geq C\|(\nabla v,\nabla T)\|_{H^1}^2+e\geq C\|(v,T)\|_{H^2}^2+e\geq C(
  \|(v,T)\|_{W^{1,6}(\Omega)}^2+e).
\end{align*}
By the aid of this, it follows from Lemma \ref{log} and Proposition \ref{prop3.1} that
\begin{align}
  m(t)=&C(1+\|v\|_\infty^2+\|\nabla_Hv\|_2^2+\|\nabla_HT\|_2^2)\nonumber\\
  \leq&C(1+\|\nabla_Hv\|_2^2+\|\nabla_HT\|_2^2)(1+\|v\|_\infty^2)\nonumber\\
  \leq&C(1+\|\nabla_Hv\|_2^2+\|\nabla_HT\|_2^2)\left[1+\left(\max\left\{1,\sup_{r\geq2}
  \frac{\|v\|_r}{\sqrt r}\right\}
  \log^{\frac{1}{2}}
  (\|v\|_{W^{1, {6}}}+e)\right)^2\right]\nonumber\\
  \leq & C(1+\|\nabla_Hv\|_2^2+\|\nabla_HT\|_2^2)
  \log(\|v\|_{W^{1, {6}}}+e)\nonumber\\
  \leq&K(t)\log \sum_{i=1}^9A_i(t) ,\label{estm}
\end{align}
where $K(t)=C(1+\|\nabla_Hv\|_2^2+\|\nabla_HT\|_2^2)$. By Proposition \ref{prop3.1}, one has $K\in L^1_{\text{loc}}([0,\infty))$.

On account of (\ref{gron1})--(\ref{estm}), one can apply Lemma \ref{gronwall} to conclude that
\begin{equation}
  \sum_{i=1}^9A_i(t)+\sum_{i=1}^9\int_0^tB_i(s)ds\leq Q(t),\label{5.4}
\end{equation}
for any $t\in(0,\mathcal T)$, where $Q$ is the corresponding continuous function on $[0,\infty)$, specified in (\ref{E2}), determined by the initial data. Recalling the definition of $A_i$ and $B_i$, one can easily check that
\begin{eqnarray*}
  &\sum_{i=1}^9A_i\geq C\|(\nabla v,\nabla T)\|_{H^1}^2,\quad\sum_{i=1}^9B_i\geq C\|(\nabla_Hv,\nabla_HT,
  \sqrt\varepsilon\partial_zv,\sqrt\varepsilon\partial_zT)\|_{H^2}^2.
\end{eqnarray*}
By the aid of the above, it follows from (\ref{5.4}) and Proposition \ref{prop3.1} that
\begin{equation*}
  \sup_{0\leq t\leq\mathcal T}\|(v,T)\|_{H^2}^2+\int_0^\mathcal{T}\|(\nabla_Hv,\nabla_HT,
  \sqrt\varepsilon\partial_zv,\sqrt\varepsilon\partial_zT)\|_{H^2}^2dt\leq C,
\end{equation*}
for a positive constant $C$ depending only on $h,\mathcal T$ and the initial data, and is independent of $\varepsilon$.

Thanks to the estimates we have just proved, one can use the same argument as in the last paragraph of the proof of Proposition 3.1 in \cite{CAOLITITI1} to obtain the corresponding estimates on $\partial_tv$ and $\partial_t T$, and thus we omit the details here. This completes the proof.
\end{proof}

After establishing the a priori $H^2$ estimate, as stated in Proposition \ref{prop5.1}, we are now ready to prove the global well-posedness of strong solutions to system (\ref{1.8})--(\ref{IC}).

\begin{proof}[\textbf{Proof of Theorem \ref{thm1}}] By Proposition \ref{lem2.4} and Proposition \ref{prop5.1}, for any $\varepsilon>0$, there is a unique global strong solution $(v_\varepsilon,T_\varepsilon)$ to system (\ref{eq1})--(\ref{eq3}) subject to the boundary and initial conditions (\ref{BC1})--(\ref{IC}), such that for any $\mathcal T>0$,
\begin{align*}
  &\sup_{0\leq t\leq\mathcal T}\|(v_\varepsilon,T_\varepsilon)\|_{H^2(\Omega)}^2+\int_0^\mathcal{T}
  \Big(\|(\partial_tv_\varepsilon,
  \partial_tT_\varepsilon)\|_{H^1(\Omega)}^2\\
  &+\|(\nabla_Hv_\varepsilon,\nabla_HT_\varepsilon,\sqrt\varepsilon \partial_zv_\varepsilon,\sqrt\varepsilon\partial_zT_\varepsilon)\|_{H^2(\Omega)}^2\Big)
  dt\leq
  C(h,\mathcal T,v_0,T_0),
\end{align*}
where $C$ is independent of $\varepsilon$.

On account of these estimates, applying Lemma \ref{AL}, there is a subsequence, still denoted by $\{(v_\varepsilon,T_\varepsilon)\}$, and $(v,T)$, such that
\begin{eqnarray*}
  &(v_\varepsilon,T_\varepsilon)\rightarrow (v,T),\quad\mbox{in }C([0,\mathcal T];H^1(\Omega)),\\
  &(\nabla_H v_\varepsilon,\nabla_HT_\varepsilon)\rightarrow(\nabla_H v,\nabla_HT),\quad\mbox{in }L^2(0,\mathcal T;H^1(\Omega)),\\
  &(v_\varepsilon,T_\varepsilon){\overset{*}{\rightharpoonup}}(v,T),\quad\mbox{in }L^\infty(0,\mathcal T;H^2(\Omega)),\\
  &(\nabla_H v_\varepsilon,\nabla_HT_\varepsilon)\rightharpoonup (\nabla_H v,\nabla_HT),\quad\mbox{in }L^2(0,\mathcal T;H^2(\Omega)),\\
  &(\partial_tv_\varepsilon,\partial_tT_\varepsilon)\rightharpoonup(\partial_tv,\partial_tT)
,\quad\mbox{in }L^2(0,\mathcal T;H^1(\Omega)),
\end{eqnarray*}
where $\rightharpoonup$ and ${\overset{*}{\rightharpoonup}}$ are the weak and weak-$*$ convergence, respectively. Thanks to these convergence, one can easily show that $(v,T)$ is a strong solution to system (\ref{1.8})--(\ref{IC}), or equivalently to system (\ref{1.1})--(\ref{1.7}).

The continuous dependence on the initial data, in particular the uniqueness, are straightforward corollary of Proposition 2.4 in \cite{CAOLITITI2}. This completes the proof.
\end{proof}
\section{Appendix: a logarithmic Sobolev embedding inequality}
\label{sec6}

In this appendix, we establish a logarithmic Sobolev embedding inequality, for any function $f\in W^{1,p}(\mathbb R^N)$, with $p>N\geq2$. Similar inequalities have been established in \cite{CAOFARHATTITI} and \cite{CAOWU} for the 2D case. We follow here the ideas of the proof presented in \cite{CAOFARHATTITI}.

\begin{lemma}
  \label{lemapp}
Let $p>N\geq2$. Then for any $F\in W^{1, {p}}(\mathbb R^N)$, we have
\begin{equation*}
  \|F\|_\infty\leq C_{N, {p},\lambda}\max\left\{1,\sup_{r\geq2}
  \frac{\|F\|_r}{r^\lambda R^{N/r}}\right\}
  \log^\lambda
  \left(e+\frac{\|F\|_p}{R^{N/p}}+\frac{\|\nabla F\|_p}{R^{N/p-1}}\right),
\end{equation*}
for any $R,\lambda>0$, and for some constant $C_{N,p,\lambda}>0$.
\end{lemma}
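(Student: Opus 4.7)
\textbf{Proof proposal for Lemma \ref{lemapp}.} The plan is to first reduce to the unscaled case $R=1$ by a change of variables, then exploit Morrey's inequality (which is available since $p>N$) to turn an $L^r$ bound into an $L^\infty$ bound, and finally optimize over $r$ to produce the logarithm.

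For the reduction, I would set $G(x):=F(Rx)$. A direct computation gives $\|G\|_\infty=\|F\|_\infty$, $\|G\|_r=R^{-N/r}\|F\|_r$, $\|G\|_p=R^{-N/p}\|F\|_p$, and $\|\nabla G\|_p=R^{1-N/p}\|\nabla F\|_p$, so the general inequality follows from the $R=1$ statement
\begin{equation*}
\|F\|_\infty\le C_{N,p,\lambda}\,M_0\,\log^\lambda\!\bigl(e+\|F\|_p+\|\nabla F\|_p\bigr),\qquad M_0:=\max\Bigl\{1,\sup_{r\ge 2}\tfrac{\|F\|_r}{r^\lambda}\Bigr\}.
\end{equation*}
Henceforth I focus on this simpler form.

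Since $p>N$, Morrey's embedding gives the Hölder estimate $|F(x)-F(y)|\le C_{N,p}\|\nabla F\|_p|x-y|^{1-N/p}$. Write $M:=\|F\|_\infty$ (assuming $M>0$, else nothing to prove) and pick $x_0$ with $|F(x_0)|\ge M/2$. Setting $\beta:=1-N/p>0$ and choosing the radius
\begin{equation*}
\rho:=\Bigl(\tfrac{M}{4C_{N,p}\|\nabla F\|_p}\Bigr)^{1/\beta},
\end{equation*}
Morrey's inequality forces $|F|\ge M/4$ on the ball $B_\rho(x_0)$. Integrating yields $\|F\|_r^r\ge c_N\rho^N(M/4)^r$, and hence, for every $r\ge 2$,
\begin{equation*}
M\le C\,\rho^{-N/r}\|F\|_r\le C\,M_0\,r^\lambda\,\rho^{-N/r}.
\end{equation*}
Substituting the value of $\rho$ converts this into
\begin{equation*}
M^{\,1+N/(r\beta)}\le C\,M_0\,r^\lambda\,\|\nabla F\|_p^{\,N/(r\beta)}.
\end{equation*}

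The last step is to choose $r$ so that $\|\nabla F\|_p^{N/(r\beta)}$ is bounded. Setting $r:=\max\bigl\{2,\,(N/\beta)\log(e+\|\nabla F\|_p)\bigr\}$ forces $\|\nabla F\|_p^{N/(r\beta)}\le e$, and $r^\lambda\le C_{N,p,\lambda}\log^\lambda(e+\|\nabla F\|_p)$. Since $M_0\ge 1$ gives $M_0^{1/(1+N/(r\beta))}\le M_0$, one extracts
\begin{equation*}
M\le C_{N,p,\lambda}\,M_0\,\log^\lambda(e+\|\nabla F\|_p)\le C_{N,p,\lambda}\,M_0\,\log^\lambda(e+\|F\|_p+\|\nabla F\|_p),
\end{equation*}
which is exactly the unscaled inequality. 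I expect the only delicate point to be ensuring that the chosen $r$ remains $\ge 2$ in the regime of small $\|\nabla F\|_p$; there one simply uses $r=2$ in the displayed inequality and notes that, since $M_0\ge 1$ and the logarithm is also $\ge 1$, the resulting bound $M\le C M_0^{1/(1+N/(2\beta))}\le CM_0$ is subsumed by the claim.
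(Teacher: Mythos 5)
Your proposal is correct, and it reaches the lemma by a genuinely different route from the paper. The paper's proof localizes $F$ with a cutoff, represents $|f(0)|^q$ through the Newtonian potential of $\Delta(|f|^q)$, integrates by parts, and applies H\"older with the exponent triple $\alpha=\frac{2Np}{p-N}$, $\beta=\frac{2Np}{2Np-N-p}$, $p$ (chosen so that $|x|^{-(N-1)}\in L^\beta(B_1)$); this yields $|f(0)|\le C_{N,p}\,q\,\|f\|_{(q-1)\alpha}^{q-1}\|\nabla f\|_p$ raised to suitable powers, and the logarithm emerges from minimizing $q^\lambda X^{1/q}$ over $q\ge 2$. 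You instead use Morrey's embedding to show $|F|\ge \|F\|_\infty/4$ on a ball whose radius is a power of $\|F\|_\infty/\|\nabla F\|_p$, compare the resulting lower bound on $\|F\|_r$ with the hypothesis $\|F\|_r\le M_0 r^\lambda$, and produce the logarithm by taking $r\sim \log(e+\|\nabla F\|_p)$; the scaling reduction to $R=1$ is identical in both. Your argument is more elementary (no potential theory, no cutoff) and makes transparent that the only inputs are a quantitative modulus of continuity from $p>N$ and an optimization over the free integrability index; the paper's argument is the one that extends most directly the two-dimensional Brezis--Gallou\"et-type computations it cites. The technical points you flag are indeed harmless: with $r=\max\{2,(N/\beta)\log(e+\|\nabla F\|_p)\}$ one always has $\|\nabla F\|_p^{N/(r\beta)}\le e$, and the passage from $M^{1+N/(r\beta)}\le C M_0\log^\lambda(\cdot)$ to $M\le C M_0\log^\lambda(\cdot)$ uses only that the right-hand side is at least $1$. (One may also assume $\|F\|_r<\infty$ for all $r\ge 2$, since otherwise the right-hand side of the lemma is infinite and there is nothing to prove.)
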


\begin{proof}
We only give the details of the proof for the case of spatial dimension $N\geq3$, the case that $N=2$ can be given similarly (see, e.g., \cite{CAOFARHATTITI}). Without loss of generality, we can suppose that
$|F(0)|=\|F\|_\infty$. Denote by $B_r$ the ball in $\mathbb R^N$ centered at the origin. Let $\phi\in C_0^\infty(B_1)$, with $\phi\equiv1$ on $B_{1/2}$, $0\leq\phi\leq1$ on $B_1$, and set $f=F\phi$.

Taking $\alpha$ and $\beta$ as
$$
\alpha=\frac{2Np}{p-N},\qquad\beta=\frac{2Np}{2Np-N-p},
$$
then one can easily check that
\begin{equation}\label{mukappa}
  \alpha>2N,\quad0<(N-1)\beta<N,\quad\frac{1}{\alpha}+\frac{1}{\beta}+\frac{1}{p}=1.
\end{equation}
Recall that $f$ can be represented in terms of $\Delta f$ by the Newtonian potential. By the aid of (\ref{mukappa}), for any $q\geq2$, we have
\begin{align*}
  |f(0)|^q=&C_N\left|\int_{\mathbb R^N}\frac{1}{|x|^{N-2}}\Delta\left(|f|^{q}\right)
  dx\right|\\
  =&C_N\left|\int_{B_1}\nabla\left(\frac{1}{|x|^{N-2}}\right)\cdot\nabla\left(|f|^{q}\right)
  dx\right|\\
  \leq&C_N(N-2)q\int_{B_1}\frac{|f|^{q-1}|\nabla
  f|}{|x|^{N-1}}dx\\
  \leq&C_{N}(N-2)q\|f\|_{(q-1)\alpha}^{q-1}\|\nabla f\|_{p}
  \left(\int_{B_1}\frac{dx}{|x|^{\beta(N-1)}}\right)^{\frac{1}{\beta}}\\
  \leq&C_{N,{p}}q\|f\|_{(q-1)\alpha}^{q-1}\|\nabla f\|_{p}.
\end{align*}
From the above inequality, for any $q\geq2$, noticing that $q^{\frac{1}{q}}\leq C$ and $(q-1)\alpha\geq2$, we deduce
\begin{align*}
  |f(0)|\leq&C_{N,p}\|f\|_{(q-1)\alpha}^{1-\frac{1}{q}}\|\nabla
  f\|_{p}^{\frac{1}{q}}\\
  =&C_{N, {p}} \left[\frac{\|f\|_{(q-1)\alpha}}{((q-1)\alpha)^\lambda}
  \right]^{1-\frac{1}{q}}[(q-1)\alpha]^{\lambda\left(1-\frac{1}{q}\right)}\|\nabla
  f\|_{p}^{\frac{1}{q}}\\
  \leq&C_{N, {p},\lambda} \left[\frac{\|f\|_{(q-1)\alpha}}{((q-1)\alpha)
  ^\lambda}
  \right]^{1-\frac{1}{q}}q^{\lambda}\|\nabla
  f\|_{p}^{\frac{1}{q}}\\
  \leq&C_{N, {p},\lambda}\max\left\{1,\sup_{r\geq2}\frac{\|f\|_r}{r^\lambda}\right\}q^\lambda \|\nabla
  f\|_{p}^{\frac{1}{q}},
\end{align*}
and thus
\begin{equation*}
  |f(0)|\leq C_{N, {p},\lambda}\max\left\{1,\sup_{r\geq2}\frac{\|f\|_r}{r^\lambda}\right\} \inf_{q\geq2}
  \left(q^\lambda(\|\nabla
  f\|_{p }+e^{4\lambda})^{\frac{1}{q}}\right).
\end{equation*}
One can check that
$$
\log(\|\nabla f\|_{p}+e^{4\lambda})\leq\max\{1,4\lambda\}\log(\|\nabla f\|_{p }+e)
$$
and
$$
\inf_{q\geq3}
  \left(q^\lambda(\|\nabla
  f\|_{p }+e^{4\lambda})^{\frac{1}{q}}\right)=\left(\frac{e}{\lambda}\right)^\lambda\log^\lambda
  (\|\nabla f\|_{p }+e^{4\lambda}).
$$
Therefore, we have
\begin{equation*}
  |f(0)|\leq C_{N, {p},\lambda}\max\left\{1,\sup_{r\geq2}\frac{\|f\|_r}{r^\lambda}\right\} \log^\lambda
  (\|\nabla f\|_{p }+e).
\end{equation*}
This implies
\begin{align}
\|F\|_\infty=&|f(0)|\leq C_{N, {p},\lambda}\max\left\{1,\sup_{r\geq2}\frac{\|f\|_r}{r^\lambda}\right\} \log^\lambda
  (\|\nabla f\|_{p }+e)\nonumber\\
  \leq&C_{N, {p},\lambda}\max\left\{1,\sup_{r\geq2}\frac{\|F\|_r}{r^\lambda}\right\} \log^\lambda
  (\|\nabla F\|_{p }+\|F\|_{p }+e)\nonumber\\
  \leq&C_{N, {p},\lambda}\max\left\{1,\sup_{r\geq2}\frac{\|F\|_r}{r^\lambda}\right\}
  \log^\lambda
  (\|F\|_{W^{1, {p}}}+e).\label{A1}
\end{align}

For any $R>0$, define function $F_R$ as
$$
F_R(x)=F(Rx),\quad\mbox{ for }x\in\mathbb R^N.
$$
One can easily check that
$$
\|F_R\|_p=R^{-N/p}\|F\|_p,\quad\|\nabla F_R\|_p=R^{1-N/p}\|\nabla F\|_p,
$$
for any $p\in(0,\infty)$. By the aid of the above, it follows from (\ref{A1}) that
\begin{align*}
  \|F\|_\infty=&\|F_R\|_\infty\leq C_{N,p,\lambda}\max\left\{1,\sup_{r\geq2}\frac{\|F_R\|_r}{r^\lambda}\right\}\log^\lambda(\|F_R\|_{W^{1,p}}+e)\\
  =&C_{N,p,\lambda}\max\left\{1,\sup_{r\geq2}\frac{\|F\|_r}{r^\lambda R^{N/r}}\right\}
  \log^\lambda\left(e+\frac{\|F\|_p}{R^{N/p}}+\frac{\|\nabla F\|_p}{R^{N/p-1}}\right),
\end{align*}
proving the conclusion.
\end{proof}

\section*{Acknowledgments}
{J.L.~and E.S.T.~are thankful to the warm hospitality of the  Instituto
Nacional de Mate\-m\'{a}tica  Pura  e Aplicada (IMPA), Brazil,  where part of
this work was completed. The work of C.C.~work is  supported in part by NSF grant DMS-1109022. The work of E.S.T.~is supported in part by the NSF
grants DMS-1009950, DMS-1109640 and DMS-1109645; also by the CNPq-CsF grant \#401615/2012-0, through the program Ci\^encia sem Fronteiras.}
\par

\end{document}